\newcommand{\sphere}{\mathbb{S}}
\newcommand{\DD}{\mathbb{D}}
\newcommand{\F}{\mathcal{F}}
\newcommand{\B}{\mathcal{B}}
\newcommand{\Av}{\operatorname{Av}}
\newcommand{\Vol}{\operatorname{Vol}}
\newcommand{\R}{\mathds R}
\newcommand{\C}{\mathds C}
\newcommand{\N}{\mathds N}
\newcommand{\codim}{\operatorname{codim}}
\newcommand{\diam}{\operatorname{diam}}
\newcommand{\vol}{\operatorname{vol}}
\newcommand{\Hr}{\mathds H}
\newcommand{\Ca}{\mathds{C}\mathrm{a}}
\renewcommand{\O}{\mathsf O}
\newcommand{\lam}{\lambda}
\newtheorem{theorem}{Theorem}[]
\newtheorem{lemma}[theorem]{Lemma}
\newtheorem{proposition}[theorem]{Proposition}
\newtheorem{mainthm}{\sc Theorem}
\theoremstyle{definition}
\newtheorem{definition}[theorem]{Definition}
\theoremstyle{remark}
\newtheorem{remark}[theorem]{Remark}
\newtheorem{example}[theorem]{Example}
\title[A Weyl's Law for Singular Riemannian Foliations]{A Weyl's Law for Singular Riemannian Foliations with Applications to Invariant Theory}
\author[S. Lin]{Samuel Lin}
\address{University of Oklahoma\newline
\indent Department of Mathematics\newline
\indent 601 Elm Ave\newline
\indent Norman, OK, 73019-3103, USA}
\email{Samuel.Z.Lin-1@ou.edu}
\author[R. A. E. Mendes]{Ricardo A. E. Mendes}
\address{University of Oklahoma\newline
\indent Department of Mathematics\newline
\indent 601 Elm Ave\newline
\indent Norman, OK, 73019-3103, USA}
\email{ricardo.mendes@ou.edu}
\author[M. Radeschi]{Marco Radeschi}
\address{Universit\`a degli Studi di Torino\newline
\indent Departimento di Matematica ``G. Peano''\newline
\indent Via Carlo Alberto, 10\newline
\indent 10123 Torino (TO), Italy}
\email{marco.radeschi@unito.it}
 \thanks{The first author was partially supported by the Bridge Funding Investment Program at the University of Oklahoma. The second author was partially supported by NSF grant DMS-2005373 and the Dodge Family College of Arts and Sciences Junior Faculty Summer Fellowship of the University of Oklahoma. The third author was partially supported by NSF grant DMS-1810913 and NSF CAREER grant DMS-2042303}
\begin{document}

\begin{abstract}
We prove a version of Weyl's Law for the basic spectrum of a closed singular Riemannian foliation $(M,\F)$ with basic mean curvature. In the special case of $M=\sphere^n$, this gives an explicit formula for the volume of the leaf space $\sphere^n/\F$ in terms of the algebra of basic polynomials. In particular, $\Vol(\sphere^n/\F)$ is a rational multiple of $\Vol(\sphere^m)$, where $m=\dim (\sphere^n/\F)$.
\end{abstract}

\maketitle

%\tableofcontents

\section{Introduction}

\todo{Abstract and introduction added}
Given a compact, connected $n$-dimensional Riemannian manifold $M$, the Laplacian operator $\Delta$ is a self-adjoint operator with a discrete spectrum $0=\tilde\lambda_0<\tilde\lambda_1\leq \tilde\lambda_2\leq \ldots$. While the eigenvalues themselves strongly depend on the geometry of $M$, the celebrated Weyl's theorem states that the \emph{growth} of the eigenvalues only depends on the dimension and volume of $M$: in other words, letting $S(t)=\max\{k\mid \tilde\lambda_k< t\}$ denote the counting function for the eigenvalues of $\Delta$, we have that
\[
S(t)\sim {\omega_n\over (2\pi)^n}\Vol(M)t^{n/2},
\]
where $\omega_n$ denotes the volume of the unit $n$-ball, and $f\sim g$ means that $\lim_{t\to \infty}{f(t)\over g(t)}=1$. Weyl's Law was generalized by Br\"uning-Heintze \cite{BH78} and Donnelly \cite{Donnelly78} to the context of compact manifolds equipped with an isometric action by a compact Lie group $G$; in their version, the growth of the counting function for the \emph{$G$-invariant} eigenfunctions of $\Delta$ depends on the volume and dimension of the \emph{orbit space} $M/G$.
\\

\emph{Singular Riemannian foliations} $(M,\F)$ are, roughly speaking, partitions of a Riemannian manifold $M$ into connected submanifolds (called \emph{leaves}) locally equidistant to one another (cf. Section \ref{SS:defs} for a more detailed definition). They generalize the decomposition into orbits of isometric actions of connected groups, which we also refer to as \emph{homogeneous} singular Riemannian foliations. Singular Riemannian foliations also appear as natural generalizations of isoparametric foliations in symmetric spaces, Riemannian submersions, and the dual foliation in non-compact manifolds with non-negative curvature. Recently, the understanding of singular Riemannian foliations has seen rapid development, and despite providing a proper generalization of group actions (that is, there exist infinitely many non-homogeneous examples, even in spheres, cf. \cite{Radeschi14}), it has been shown that they retain a lot of the rigid properties that isometric actions enjoy \cite{GGR15,GR15,MR19}. This is especially true when $M$ is a round sphere $\sphere^n$, which appears as an ``infinitesimal model'' for general singular Riemannian foliations around a point \cite{LR18, MR20q}.

Given a singular Riemannian foliation $(M,\F)$ with closed leaves, one can define a \emph{leaf space} $M/\F$ which is an analogue to the orbit space, and the algebra of \emph{basic functions} defined as the functions which are constants along the leaves. If $\F$ has \emph{basic mean curvature}, in the sense that the mean curvature vectors of the leaves of $\F$ project to a vector field in $M/\F$, then the Laplace operator restricts to a self-adjoint operator on the space of basic functions, with a discrete spectrum $0=\lambda_0<\lambda_1\leq \lambda_2\leq \ldots$ (cf. Section \ref{SS:princ}). The first main result can then be described as Weyl's Law for closed singular Riemannian foliations:

\begin{mainthm}
\label{MT:Weyl}
Let $M$ be a compact Riemannian manifold and $\F$ a singular Riemannian foliation of $M$ with closed leaves and basic mean curvature. Let $N(t)=\max\{i\mid \lambda_i< t\}$ be the counting function for the basic spectrum, and $X=M/\F$ be the leaf space. Then 
\[N(t)\sim {vol(X)\omega_m\over (2\pi)^m}t^{m/2}\qquad \text{as}\quad t\to \infty \]
where $m=\dim X$ and $\omega_m=vol \DD^m$.
\end{mainthm}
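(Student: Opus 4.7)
The strategy is to reduce to the principal stratum, where the leaf space is a smooth Riemannian manifold, and then combine the Riemannian submersion structure with a Dirichlet--Neumann bracketing argument.

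First, I would restrict attention to the principal stratum $M^{\mathrm{reg}}\subset M$ of regular leaves. On this open dense subset the foliation is regular, and the leaf space $X^{\mathrm{reg}}=M^{\mathrm{reg}}/\F$ is a smooth Riemannian manifold of dimension $m$ with respect to which the projection $\pi:M^{\mathrm{reg}}\to X^{\mathrm{reg}}$ is a Riemannian submersion. Since the singular strata have positive codimension in $X$, we have $\vol(X)=\vol(X^{\mathrm{reg}})$, so the right-hand side of the Weyl asymptotic only sees the regular part.

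Next, I would use the basic mean curvature hypothesis to identify the basic Laplacian with an elliptic operator on $X^{\mathrm{reg}}$. For a smooth basic function $f=\bar f\circ \pi$, a standard Riemannian submersion computation gives
\[
\Delta^M f = \bigl(\Delta^{X^{\mathrm{reg}}}\bar f - \bar H(\bar f)\bigr)\circ \pi,
\]
where $\bar H$ is the vector field on $X^{\mathrm{reg}}$ obtained by projecting the mean curvature vector field of the leaves, which is well-defined exactly because the mean curvature is basic. Thus the basic spectrum equals the spectrum of the Schrödinger-type operator $L=\Delta^{X^{\mathrm{reg}}}-\bar H$ on $X^{\mathrm{reg}}$. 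Since $L$ has the same principal symbol as $\Delta^{X^{\mathrm{reg}}}$, the first-order perturbation $\bar H$ does not affect Weyl's leading asymptotic on any compact piece.

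For the asymptotic itself, the plan is to bracket $N(t)$ between Dirichlet and Neumann counting functions on the complement $X^{\mathrm{reg}}\setminus U_\epsilon$ of a small tubular neighborhood $U_\epsilon$ of the singular stratum $X^{\mathrm{sing}}$. On the compact manifold with boundary $X^{\mathrm{reg}}\setminus U_\epsilon$, classical Weyl's law gives both counting functions the same leading term $\frac{\vol(X^{\mathrm{reg}}\setminus U_\epsilon)\omega_m}{(2\pi)^m}t^{m/2}$, and the min--max principle delivers the lower bound $N^{\mathrm{Dir}}_\epsilon(t)\le N(t)$ immediately; letting $\epsilon\to 0$ and using $\vol(U_\epsilon)\to 0$ yields the lower half of the theorem.

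The main obstacle will be the matching upper bound, since one must control the number of basic eigenfunctions concentrating near the singular strata. My plan is to exploit the local Molino-type bundle structure around each singular stratum of the leaf space: locally, $U_\epsilon$ fibers over a stratum by cones on lower-dimensional leaf spaces, which lets one estimate the Neumann counting function on $U_\epsilon$ by a product of a base counting function (of smaller dimension $<m$) and a disc-type fiber counting function with uniformly controlled bottom eigenvalue as $\epsilon\to 0$. Combined with the volume bound $\vol(U_\epsilon)\to 0$, this should give an $o(t^{m/2})$ bound for the singular contribution, and together with the Neumann bracketing on $X^{\mathrm{reg}}\setminus U_\epsilon$ complete the proof.
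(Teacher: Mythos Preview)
Your approach is genuinely different from the paper's, and the upper bound step contains a real gap.

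First, a correction to the setup: the operator $L=\Delta^{X^{\mathrm{reg}}}-\bar H$ is \emph{not} symmetric on $L^2(X^{\mathrm{reg}})$ with the Riemannian measure; the pull-back isometry $\sigma^*$ identifies $L^2(M)^b$ with $L^2(X_0, h\,d\mathrm{vol})$, where $h(x)=\vol(\sigma^{-1}(x))$ is the leaf-volume weight. The basic Laplacian becomes self-adjoint on this \emph{weighted} space, and $h$ degenerates (tends to $0$) as you approach the singular strata. Your bracketing must therefore be carried out for a weighted problem with a degenerating weight, which you do not mention.

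The serious issue is the upper bound. Dirichlet--Neumann bracketing gives $N(t)\le N^{\mathrm{Neu}}_{X_0\setminus U_\epsilon}(t)+N^{\mathrm{Neu}}_{U_\epsilon}(t)$, and you need $\limsup_{t\to\infty} t^{-m/2} N^{\mathrm{Neu}}_{U_\epsilon}(t)$ to be small for small $\epsilon$. But $U_\epsilon$ still contains the entire singular set, so it is not a compact manifold with boundary and you cannot invoke classical Weyl asymptotics for it; the argument is circular. Neumann counting functions are \emph{not} controlled by volume alone when the geometry (or the weight) degenerates, so ``$\vol(U_\epsilon)\to 0$'' does not by itself yield $o(t^{m/2})$. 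Your sketch (``cones on lower-dimensional leaf spaces \dots\ uniformly controlled bottom eigenvalue'') is exactly where the real work lies, and in the foliated setting the slice is governed by the disconnected slice foliation, whose structure is subtle; making this quantitative would require something of comparable strength to the paper's main geometric input.

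For comparison, the paper avoids working directly on the singular quotient. It passes to the heat kernel and uses the trace formula $s\int_0^\infty e^{-st}N(t)\,dt=\int_{X_0}\Gamma^T_s(x,x)\,h(x)\,dx$. H\"ormander's local spectral asymptotics plus the Abelian theorem give the pointwise behavior of $\Gamma^T_s(x,x)h(x)$ as $s\to 0$; the crucial step is a \emph{uniform} bound $|s^{m/2}\Gamma^T_s(x,x)h(x)|\le C$ on all of $X_0$, obtained by pulling $\Gamma^T_s$ back to the averaged heat kernel on $M$ and applying the geometric estimate
\[
\int_{L_p}\exp\!\left(-\frac{d_M(p,q)^2}{s}\right)dq\le C\,s^{\dim L_p/2},
\]
proved by induction on $\dim M$ via the Slice Theorem. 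Dominated convergence and the Tauberian theorem then finish. This uniform heat-kernel bound is precisely the substitute for the Neumann control on $U_\epsilon$ that your outline is missing.
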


The condition of basic mean curvature is always satisfied when $\F$ is homogeneous or when $M$ is the Euclidean space, a round sphere \cite{AR15}, as well as complex and quaternionic projective spaces \cite[page 2205]{AR16}.
%\todo{Sam: Do we have a reference of this fact?}

In the homogeneous case, Theorem \ref{MT:Weyl} follows as a special case of Corollary 3.5 of Br\"uning-Heintze \cite{BH78} and the Main Theorem of Donnelly \cite{Donnelly78}.  In the \emph{regular case}, namely when all leaves have the same dimension, this result was proved by Richardson \cite{Richardson10} without the assumptions of closed leaves and basic mean curvature.
\\

A surprising result we prove using Weyl's Law is the following:

\begin{mainthm}\label{MT:volume}
If $(\sphere^n,\F)$ is a closed singular Riemannian foliation  (resp. $G\to \O(n+1)$ is an orthogonal representation) with $m$-dimensional leaf space (resp. orbit space), then $\Vol(\sphere^n/\F)$ (resp. $\Vol(\sphere^n/G)$) is a rational multiple of 
$\Vol(\sphere^m)$.
\end{mainthm}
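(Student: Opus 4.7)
The plan is to combine Theorem \ref{MT:Weyl} with the rationality of the Hilbert series of the algebra of basic polynomials on $\R^{n+1}$. Since the round sphere satisfies the basic mean curvature hypothesis (by \cite{AR15}), Theorem \ref{MT:Weyl} gives
\[
N(t) \;\sim\; \frac{\Vol(\sphere^n/\F)\,\omega_m}{(2\pi)^m}\, t^{m/2}.
\]
As a first step I would identify, for $\lambda=d(d+n-1)$, the basic $\lambda$-eigenspace of $\Delta_{\sphere^n}$ with the space of basic spherical harmonics of degree $d$. This boils down to checking that orthogonal projection onto a spherical-harmonic eigenspace preserves basic functions, which holds because spectral projection commutes with leaf-averaging.

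The second step passes to polynomial language. A homogeneous polynomial on $\R^{n+1}$ is basic with respect to the cone foliation $C\F$ if and only if its restriction to $\sphere^n$ is basic. Write $\B=\bigoplus_d \B^d$ for the resulting graded algebra, $\mathcal{H}^d\subset\B^d$ for its harmonic homogeneous components, and note that $r^2=x_1^2+\cdots+x_{n+1}^2\in\B^2$. The classical harmonic decomposition then restricts to $\B^d=r^2\B^{d-2}\oplus\mathcal{H}^d$, so setting $b_d=\dim\B^d$, $h_d=\dim\mathcal{H}^d$ and telescoping yields
\[
N\bigl(d(d+n-1)+\varepsilon\bigr) \;=\; \sum_{d'=0}^{d} h_{d'} \;=\; b_d + b_{d-1}
\qquad (\varepsilon>0 \text{ small}).
\]

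The third step invokes finite generation of $\B$: in the orthogonal-representation case this is Hilbert's classical theorem, and in the general closed SRF setting on spheres it is provided by \cite{MR19}. Consequently the Hilbert series $\sum_d b_d\,t^d$ is a rational function in $\mathbb{Q}(t)$ of the form $P(t)/\prod_i(1-t^{d_i})$ with $P\in\mathbb{Z}[t]$, so $b_d\sim c\,d^{m'}/m'!$ for some positive rational $c$ and some integer $m'\geq 0$. Comparing this with the asymptotics supplied by Theorem \ref{MT:Weyl} forces $m'=m$ and
\[
\frac{\Vol(\sphere^n/\F)\,\omega_m}{(2\pi)^m} \;=\; \frac{2c}{m!}.
\]
Legendre's duplication formula for the gamma function yields the identity $\omega_m\Vol(\sphere^m)=2^{m+1}\pi^m/m!$, which rearranges the previous line into $\Vol(\sphere^n/\F)=c\,\Vol(\sphere^m)$, with $c\in\mathbb{Q}$, as required.

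The main conceptual obstacle I anticipate is the rationality of the Hilbert series, which depends on finite generation of $\B$; in the non-homogeneous SRF case this is a substantial input drawn from \cite{MR19}, while in the homogeneous case it reduces to Hilbert's classical theorem. The remaining steps---the spherical-harmonic bookkeeping, the telescoping identity, and the final gamma-function manipulation---are then routine.
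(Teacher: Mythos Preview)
Your overall strategy matches the paper's: combine Weyl's Law on $\sphere^n$ with spherical harmonics to relate $N(t)$ to the coefficients of the Hilbert series of the basic polynomial algebra, then exploit the rationality of that series. Steps 1 and 2 are essentially the paper's Lemma \ref{L:sphericalWeyl}, and the gamma-function identity at the end is correct. (A minor point: finite generation of the basic polynomial algebra is the Algebraicity Theorem of \cite{LR18}, not \cite{MR19}.)

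The gap is in Step 3. You assert that because $H(t)=P(t)/\prod_i(1-t^{d_i})$ with $P\in\mathbb{Z}[t]$, one has $b_d \sim c\,d^{m'}/m'!$ for some rational $c$. This is false in general: the Hilbert function of a finitely generated graded algebra is only a \emph{quasi}-polynomial, and its leading part may oscillate. For instance $A=\R[t^3]\subset\R[t]$ has $H(z)=1/(1-z^3)$ and $b_d\in\{0,1\}$ according to $d\bmod 3$, so no asymptotic $b_d\sim c$ exists; the paper flags exactly this example in the remark after Lemma \ref{L:asymptotics}. You therefore cannot first extract a rational $c$ from the Hilbert series and then match it against Weyl.

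The paper runs the logic in the opposite direction, and this is the substance of Lemma \ref{L:asymptotics}. Weyl's Law supplies the asymptotic $b_d+b_{d-1}\sim C d^m$ for the \emph{a priori real} constant $C=\Vol(X)\omega_m/(2\pi)^m$; a partial-fraction analysis of $(1+z)H(z)$ then shows that the existence of this limit forces the contributions from all roots-of-unity poles other than $z=1$ to be of strictly lower order, so that $Cm!/2$ equals the leading Laurent coefficient of $H(z)$ at $z=1$. Since $H\in\mathbb{Q}(z)$, that coefficient is rational, hence so is $C$, and your final computation then goes through unchanged. The ingredient you are missing is precisely this argument that the other poles cannot contribute at top order; it genuinely uses the limit coming from Weyl, not just finite generation.
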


To the best of the authors' knowledge, Theorem \ref{MT:volume} is novel even in the homogeneous case, even though it is clearly true when $G$ is finite. Since the quotients $\sphere^n/\F$ can be thought as singular spaces all of whose geodesics are closed (in the sense of quotient geodesics from \cite[Section 3.4]{MR20}), Theorem \ref{MT:volume} is similar in spirit to \cite[Theorem 3]{Wei77} which applies to \emph{manifolds} all of whose geodesic are closed.

In the case of foliations, Theorem \ref{MT:volume} follows immediately from Theorem \ref{MT:spherical} below, in which the rational number $\Vol(\sphere^n/\F)/\Vol(\sphere^m)$ is computed explicitly in terms of the \emph{algebraic structure} of $(\sphere^n,\F)$. Recall that a singular Riemannian foliation $(\sphere^n,\F)$ extends to an \emph{infinitesimal foliation} $(\R^{n+1},\F)$, i.e., a singular Riemannian foliation with the origin as a leaf. In this case, it was shown in \cite{MR20} that the algebra $A$ of basic polynomials is a so-called \emph{Laplacian algebra} (i.e., a graded algebra preserved by the multiplication by $r^2=\sum_i x_i^2$ and by the Laplacian $\Delta=\sum_i{\partial^2\over \partial x_i^2}$) which completely determines $\F$. Furthermore, there is a dictionary between the geometric properties of an infinitesimal foliation and the algebraic properties of its Laplacian algebra $A$. The next main theorem adds a line to this dictionary:

\begin{mainthm}
\label{MT:spherical}
Let $(V,\F)$, $\dim V=n+1$, be an infinitesimal foliation with closed leaves, let $A=\bigoplus_i A_i\subset\R[V]$ be the associated Laplacian algebra of basic polynomials, and let $H(z)=\sum_i (\dim A_i) z^i$ denote its Hilbert series. Denote by $\sphere V$ the unit sphere in $V$ and by $m$ the dimension of the leaf space $X=\sphere V /\F$.

Then:
\begin{enumerate}[(a)]
\item The Laurent series of $H(z)$ at $z=1$ has the form
\[H(z)= \frac{\Vol(X)}{\Vol(\sphere^m)} (1-z)^{-m-1} + (\cdots) (1-z)^{-m} + \cdots\]
\item $A$ is a \emph{free module over a free subalgebra}, i.e. $A=\bigoplus_{i=1}^\ell \R[f_0,\ldots, f_m]g_j$ for some homogeneous $f_0,\ldots f_m, g_1,\ldots g_\ell\in A$, and
\[\frac{\Vol(X)}{\Vol(\sphere^m)}= \frac{\ell }{\deg f_0 \cdots \deg f_m}.\] 
\end{enumerate}
\end{mainthm}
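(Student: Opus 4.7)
The plan is to prove (a) by connecting the growth of $\dim A_d$ to the basic spectral counting function $N(t)$ via the spherical harmonic decomposition and then invoking Theorem~\ref{MT:Weyl}, and then to deduce (b) from (a) combined with the Cohen--Macaulay property of $A$.

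For (a), I would first use the Laplacian algebra structure to decompose $A_d = \mathcal{H}_d \oplus r^2 A_{d-2}$, where $\mathcal{H}_d := A_d\cap\ker\Delta$ is the space of basic harmonic polynomials of degree $d$; iterating gives
\[ \dim A_d = \sum_{\substack{k\le d\\ k\equiv d\pmod{2}}} \dim \mathcal{H}_k. \]
Restricted to $\sphere V$, basic harmonic polynomials of degree $k$ are eigenfunctions of the spherical Laplacian with eigenvalue $k(k+n-1)$, and they span the basic $L^2$-space by Stone--Weierstrass (basic polynomials separate leaves, hence are dense in $C(X)$). Thus $N(t)=\sum_{k(k+n-1)<t}\dim\mathcal{H}_k$, and Theorem~\ref{MT:Weyl} gives $\sum_{k\le K}\dim\mathcal{H}_k \sim \frac{\Vol(X)\omega_m}{(2\pi)^m}K^m$. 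A routine parity-summation then yields $\dim A_d \sim \frac{\Vol(X)\omega_m}{2(2\pi)^m}d^m$. The Legendre duplication formula provides the identity $\omega_m\Vol(\sphere^m)\,m! = 2(2\pi)^m$, which rewrites this as $\dim A_d \sim \frac{\Vol(X)}{\Vol(\sphere^m)}\cdot\frac{d^m}{m!}$; by the standard Abelian fact $\sum_d d^m z^d \sim m!/(1-z)^{m+1}$ near $z=1$, this translates directly into the claimed leading Laurent coefficient of $H(z)$.

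For (b), the algebra $A$ is a finitely generated graded $\R$-algebra of Krull dimension $\dim(V/\F)=m+1$, so Noether normalization supplies a homogeneous system of parameters $f_0,\ldots,f_m$. Once $A$ is known to be Cohen--Macaulay, it becomes a free $\R[f_0,\ldots,f_m]$-module, yielding the decomposition $A=\bigoplus_{j=1}^\ell \R[f_0,\ldots,f_m]g_j$ with Hilbert series
\[ H(z) = \frac{\sum_{j=1}^\ell z^{\deg g_j}}{\prod_{i=0}^{m}\bigl(1-z^{\deg f_i}\bigr)}. \]
Extracting the leading Laurent coefficient at $z=1$ gives $\ell/(\deg f_0\cdots\deg f_m)$, which by part (a) must equal $\Vol(X)/\Vol(\sphere^m)$, proving the formula.

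The main obstacle is the Cohen--Macaulay property of $A$ outside the homogeneous case, where it follows from Hochster--Roberts. For general closed Laplacian algebras I would either appeal to the corresponding structural result from \cite{MR20} or, failing that, attempt a reduction to classical group-invariant theory via the slice representations of the infinitesimal foliation at each leaf, leveraging the local homogeneity structure of singular Riemannian foliations. The remainder of the argument is a clean bookkeeping exercise combining Theorem~\ref{MT:Weyl} with the spherical harmonic decomposition and standard Hilbert-series formalism.
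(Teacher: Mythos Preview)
Your overall approach---spherical harmonics to link $N(t)$ with the Hilbert series, then Weyl's Law, then Cohen--Macaulay plus Hironaka for (b)---is exactly the paper's strategy. There is, however, one genuine gap.

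The step ``a routine parity-summation then yields $\dim A_d \sim \tfrac{\Vol(X)\omega_m}{2(2\pi)^m}d^m$'' is false in general. Consider the Hopf fibration $\sphere^1\curvearrowright\C^2$: all invariants have even degree, so $\dim A_d=0$ for every odd $d$, and $\dim A_d$ has no limit asymptotic at all (here $m=2$). The issue is that Weyl gives you $\sum_{k\le K}\dim\mathcal{H}_k\sim CK^m$, but this is the \emph{sum over both parities}, and nothing forces the even and odd halves to balance. The paper avoids this by never passing through $\dim A_d$: it observes that the $d$th coefficient of $(1+z)H(z)$ equals $\dim A_d+\dim A_{d-1}=\sum_{k\le d}\dim\mathcal H_k=N(d(d+n-1))$, applies Weyl to that, and then uses the rationality of $H(z)$ (Hironaka decomposition) together with a partial-fraction argument to show that the asymptotic $b_k\sim Ck^m$ forces all poles of $H(z)$ on the unit circle other than $z=1$ to have order $\le m$. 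Only then does one obtain the leading Laurent coefficient of $H(z)$ itself. Your Abelian step alone (``$\sum d^m z^d\sim m!/(1-z)^{m+1}$'') does not give a Laurent coefficient without first knowing $H$ is meromorphic at $z=1$ with the right pole order, and that in turn needs the rational-function analysis.

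On Cohen--Macaulay: the paper does not cite a CM statement from \cite{MR20}, nor does it reduce to slice representations. It instead imports the \emph{Reynolds operator} $\Pi\colon\R[V]\to A$ from \cite{MR20} and runs the Hochster--Roberts argument verbatim: $\Pi$ being an $A$-module projection forces $(I\R[V])\cap A=I$ for every ideal $I\subset A$, which is a standard sufficient condition for CM. So your instinct to lean on \cite{MR20} is right, but the relevant input is the Reynolds operator, not a pre-packaged CM theorem.
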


The analogue of Theorem \ref{MT:spherical} for representations of disconnected groups also holds; see Proposition \ref{P:spherical}.
\\
%Theorem \ref{MT:spherical} is a consequence of the following version of Weyl's Law:
%\todo{Sam: Is it ``the Weyl Law" or ``Weyl's Law"?}  
%\begin{mainthm}
%\label{MT:uniformgeometry}
%Given a sphere $\sphere V$ and a metric space $X$, there exists a constant $C=C(V,X)$ such that, for any spherical manifold submetry $\sigma:\sphere V\to X$, every $p\in \sphere V$, and every $s>0$, one has
%\[
%\int_{L_p}\exp\left(-{d_{\sphere V}(p,q)^2\over s}\right)dq\leq C\sphere^{\dim L_p\over 2}
%\]
%\end{mainthm}

Let us comment on the main aspects of the proofs. Theorem \ref{MT:Weyl} follows a framework similar to the homogeneous version by Br\"uning-Heintze \cite{BH78}. In their proof of Weyl's Law, however, results from transformation groups and representation theory were used in a fundamental way in many places, which are not available in the foliated case. Some of these obstacles have been overcome using recent results in the literature of singular Riemannian foliations, most notably the averaging operator introduced in \cite{LR18} and the foliated slice theorem in \cite{MR19}. Others require new uniform estimates on the geometry of leaves (see Section \ref{S:geometricestimate}), which culminate in the following result:
\begin{mainthm}
\label{MT:geometricestimate} 
Given a compact connected Riemannian manifold $M$ and a singular Riemannian foliation $\F$ with closed leaves, there exists a constant $C=C(M,\F)$ such that, for every $p\in M$, and every $s>0$, one has 
\[
\int_{L_p}\exp\left(-{d_M(p,q)^2\over s}\right)dq\leq Cs^{\dim L_p\over 2}.
\]
where $L_p$ denotes the leaf through $p$.
\end{mainthm}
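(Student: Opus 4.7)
The plan is to reduce the estimate to a uniform volume-growth bound on leaves, and then to establish that bound via the foliated slice theorem of \cite{MR19} combined with a compactness/covering argument. Set $V_p(r) := \vol(L_p \cap B_M(p,r))$. The problem reduces to producing a constant $C_0 = C_0(M,\F)$ with $V_p(r) \leq C_0\, r^{\dim L_p}$ for every $p \in M$ and every $r > 0$; call this inequality $(\ast)$. Indeed, granted $(\ast)$, Lebesgue--Stieltjes integration by parts yields
\[\int_{L_p} e^{-d_M(p,q)^2/s}\,dq \;=\; \int_0^\infty \tfrac{2r}{s}\, e^{-r^2/s}\, V_p(r)\,dr \;\leq\; C_0 \int_0^\infty \tfrac{2r}{s}\, r^{\dim L_p}\, e^{-r^2/s}\, dr,\]
and the substitution $u = r^2/s$ turns the right-hand side into $C_0\,\Gamma(\tfrac{\dim L_p}{2}+1)\, s^{\dim L_p/2}$, with the $\Gamma$-factor bounded uniformly since $\dim L_p \leq \dim M$.

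To prove $(\ast)$ I would first establish a local version: for each $p_0 \in M$ there exist a neighborhood $W_{p_0}$ of $p_0$ and constants $r_{p_0}, C_{p_0} > 0$ with $V_{p'}(r) \leq C_{p_0}\, r^{\dim L_{p'}}$ for all $p' \in W_{p_0}$ and $r \leq r_{p_0}$. The tool is the foliated slice theorem \cite{MR19}, which provides a foliated isometry between a neighborhood of $L_{p_0}$ and the $\epsilon$-disk bundle of $\nu L_{p_0}$, equipped with a linearized foliation whose leaves are bundles over pieces of $L_{p_0}$ with fibers given by leaves of an infinitesimal foliation on the slice at $p_0$. In this model the leaf through any nearby $p'$ satisfies $\dim L_{p'} = \dim L_{p_0} + \dim(\text{slice-leaf through }p')$, and a Fubini-type computation together with standard volume bounds on submanifolds of bounded geometry (in both base and fiber) yields the claimed exponent. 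By compactness of $M$, finitely many such $W_{p_0}$ cover $M$ and produce uniform constants $r_0, C_1 > 0$.

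Next I would bootstrap the local estimate to $(\ast)$ for all $r$. Fix a finite subcover $\{B_M(p_i, r_0/2)\}_{i=1}^N$ of $M$: for each leaf $L$ and each $i$ with $L \cap B_M(p_i, r_0/2) \neq \emptyset$, choosing $q_i \in L \cap B_M(p_i, r_0/2)$ and using $L \cap B_M(p_i, r_0/2) \subset B_M(q_i, r_0)$ gives $\vol(L \cap B_M(p_i, r_0/2)) \leq V_{q_i}(r_0) \leq C_1 r_0^{\dim L}$. Summing over $i$ yields the uniform leaf-volume bound $\vol(L) \leq N C_1 r_0^{\dim L}$, so for $r > r_0$ we have $V_p(r) \leq \vol(L_p) \leq NC_1 r_0^{\dim L_p} \leq NC_1\, r^{\dim L_p}$, which combined with the local bound establishes $(\ast)$.

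The main obstacle will be the local volume-growth estimate when $p'$ lies on a leaf of strictly higher dimension than $L_{p_0}$ (i.e., when $p'$ is in a more regular stratum than $p_0$). In this case one must show that the extra leaf directions contribute precisely $r^{\dim L_{p'} - \dim L_{p_0}}$ of volume growth uniformly in $p'$, which requires the full strength of the slice-theorem structure to relate the "thickening" of nearby higher-dimensional leaves to the infinitesimal foliation on the slice, rather than merely a tubular-neighborhood description of the single leaf $L_{p_0}$.
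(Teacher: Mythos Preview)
Your reduction to the uniform volume-growth bound $(\ast)$ via Stieltjes integration is correct and clean, and the bootstrapping from a local bound at scale $r_0$ to the global bound via a finite cover is fine. The gap is in the local step. You claim that ``a Fubini-type computation together with standard volume bounds on submanifolds of bounded geometry (in both base and fiber) yields the claimed exponent'', but the fibers here---the leaves of the slice foliation on $\nu_{p_0}L_{p_0}$---do \emph{not} have uniformly bounded second fundamental form: by the homothety property of infinitesimal foliations, the slice leaf at radius $t$ is the $t$-rescaling of a leaf at radius $1$, so its second fundamental form in the Euclidean slice is of order $1/t$ and blows up as $p'\to p_0$. (Already for the foliation of $\sphere^2$ by latitude circles, the geodesic curvature of the leaf at colatitude $\theta$ is $\cot\theta$.) No off-the-shelf bounded-geometry estimate produces the required uniform bound $\vol\big(\text{slice leaf}\cap B(v,r)\big)\leq C\,r^{k}$.

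What that fiber bound actually requires is precisely $(\ast)$ again, applied to the slice foliation restricted to the unit normal sphere---a singular Riemannian foliation on a compact manifold of strictly smaller dimension, whose leaves are in general \emph{disconnected}. So the argument must be set up as an induction on $\dim M$, and the statement must be formulated for disconnected foliations so that the inductive hypothesis applies to slice foliations. This is exactly the structure of the paper's proof: it proves the more general Theorem~\ref{T:geometricestimate} for disconnected closed SRFs by induction on $\dim M$, using the coarea formula for $\pi|_{L_p}\colon L_p\to L$ (with a uniform Jacobian lower bound, Lemma~\ref{L:Jacobian}, which your Fubini step would also need but do not mention), a distance-splitting estimate (Lemma~\ref{L:technical}) to separate the base and fiber contributions, Lemma~\ref{L:single} for the base integral over $L$, and the inductive hypothesis for the fiber integral over the slice leaf in $\sphere(\nu_y L)$. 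Your volume-growth reformulation of the target inequality is a nice repackaging, but the inductive skeleton, the passage to disconnected foliations, and the Jacobian control are not optional ingredients you can replace by ``bounded geometry''.
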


The proof of Theorem \ref{MT:spherical} relies on the observation that the Hilbert series $H(z)$ for the Laplacian algebra $A$ of basic polynomials of $(V, \F)$ contains, via the theory of spherical harmonics, the same information as the basic spectrum for the Laplacian of $\sphere V$. Using well-known techniques from enumerative combinatorics, we relate the pole of $H(z)$ at $z=1$ with the growth of the coefficients of $H(z)$, hence to the growth of basic eigenvalues of $(\sphere V, \F)$, which is given by the foliated Weyl's Law (Theorem \ref{MT:Weyl}).

The paper is structured as follows: in Section \ref{S:prelim}, we recall the main definitions and important results that we will use throughout the paper. Section \ref{S:geometricestimate} has a more geometric flavor and is devoted to the proof of Theorem \ref{MT:geometricestimate}.  In Section \ref{S:Weyl}, which has a more analytic flavor, we prove Theorem \ref{MT:Weyl}. Finally, the more algebraic Section \ref{S:spherical} contains the proof of Theorem \ref{MT:spherical}.

\subsection*{Acknowledgements} It is a pleasure to thank Emilio Lauret for pointing out the paper of Donnelly \cite{Donnelly78}, which kickstarted the project. We would also like to thank Ken Richardson for insightful comments and for pointing out further existing literature on the subject. Finally, we would like to thank Alexander Lytchak for many conversations regarding the interaction between the results in the current manuscript and his recent preprint \cite{Lytchak23}. This paper was written in part during the second author's visits to the third author at the University of Notre Dame and the University of Torino. The second author thanks both universities for their hospitality.

%
%\section*{Notations}
%\begin{itemize}
%\item $M=$ a compact Riemannian manifold of dimension $n$
%\item $\F=$ a singular Riemannian foliation with closed leaves, sometimes also assumed to have basic mean curvature.
%\item $n=$ dimension of $M$
%\item $X=$ leaf space $M/\F$.
%\item $m=$ dimension of $X$, that is, quotient of $M$ by the sRf, or manifold submetry.
%\item $t=$ variable in the counting function $N(t)$.
%\item $s=$ variable in the fundamental solution of heat equation, hence also in the Laplace transform of the counting function
%\item $z=$ variable in Hilbert series of graded algebras
%\item $A=$ algebra of basic polynomials, or a general Laplacian algebra;  AND the operator $T$ or $\Delta$... \todo{fix clash? Sam: How about $U$?}
%\item $\Av=$ the averaging operator
%\end{itemize}
%
\section{Preliminaries}\label{S:prelim}
\subsection{Singular Riemannian foliations}

\subsubsection{Basic definitions and facts.} \label{SS:defs} The main objects of study in the present paper are \emph{singular Riemannian foliations}, first defined in \cite{Molino}, see also \cite{RadLN}. By definition, a singular Riemannian foliation is a partition $\F$ of a Riemannian manifold $M$ into connected, injectively immersed smooth submanifolds
%\todo{Sam: Connected?}
(called \emph{leaves}), which is generated by a set of smooth vertical fields, and such that every geodesic of $M$ that meets one leaf orthogonally is orthogonal to every other leaf that it meets. The last condition is equivalent to the leaves being locally at a constant distance apart \cite[Proposition 2.3]{RadLN}.

The main examples are fibers of a Riemannian submersion, leaves of a (regular) Riemannian foliation, the parallel and focal sets of an isoparametric submanifold of a sphere, and orbits of an isometric group action (sometimes called ``homogeneous'' singular Riemannian foliations).

We say $\F$ is \emph{closed} if all the leaves are closed. In this case, the condition involving normal geodesics is equivalent to (global) equidistance of every pair of leaves \cite[Remark 2.5]{RadLN}.

We say $\F$ is \emph{infinitesimal} when $M$ is isometric to some Euclidean space $\R^{n+1}$, and the origin $\{0\}$ is a leaf. In this case any sphere centered at the origin is a union of leaves (in other words, is ``saturated'' by $\F$), and, by Molino's Homothetic Transformation Lemma \cite[Lemma 6.2]{Molino}, $\F$ is determined by its restriction to any such sphere. In fact, the map $\F\mapsto \F|_{\sphere^{n}}$ is a bijection between infinitesimal singular Riemannian foliations of $\R^{n+1}$ and singular Riemannian foliations of $\sphere^{n}$.

Given a singular Riemannian foliation $(M,\F)$, the set of leaves is called the \emph{leaf space}, and is denoted by $M/\F$. When the leaves are closed, the leaf space $M/\F$ has a natural metric structure, with respect to which the natural projection map $\sigma\colon M\to M/\F$ is a ``manifold submetry''. See \cite{KL20, MR20} and references therein for more information on submetries and manifold submetries.

A function defined on $M$ is \emph{ $\F$-basic} (or just \emph{basic}, if $\F$ is understood from context) when it is constant on the leaves of $\F$. Equivalently, when it is the pull-back by $\sigma$ of a function defined on the ``base'' $M/\F$. We sometimes denote sets of basic functions with the superscript $^b$, for example, $C^\infty(M)^b$ for the set of all smooth basic (real-valued) functions on $M$.

\subsubsection{Invariant Theory.} \label{SS:InvT} If $(V,\F)$ is an infinitesimal closed singular Riemannian foliation, the algebra of all basic \emph{polynomials}, denoted $A=\B(\F)=\R[V]^b$, is large enough that it separates leaves, and moreover, it is a finitely generated algebra. This is known as the Algebraicity Theorem \cite{LR18}. When $(V,\F)$ is homogeneous, that is, when the leaves are given as the orbits of an orthogonal representation of a compact group, this algebra $A$ is called the ``algebra of invariants'', and the Algebraicity Theorem reduces to classical facts in Invariant Theory.

\subsubsection{Principal part.} \label{SS:princ} Given a singular Riemannian foliation $(M,\F)$, there exists an open, dense, saturated, full-measure subset $M_0\subset M$, called the \emph{principal part}, such that $M_0/\F \subset M/\F$ is a smooth Riemannian manifold, and $\sigma|_{M_0}\colon M_0\to M_0/\F$ is a Riemannian submersion (see \cite[page 3]{LR18}). (To justify the name, note that, when $\F$ is homogeneous, $M_0$ is the union of the principal orbits.) Note that, when $M$ is compact and $\F$ is closed, the map $\sigma|_{M_0}\colon M_0\to M_0/\F$ is proper, hence a locally trivial fiber bundle by Ehresmann's Lemma. In particular, the function $h\colon M_0/\F\to\R$, given by $h(x)=\vol (\sigma^{-1}(x))$, and its pull-back $h\circ \sigma|_{M_0}\colon M_0\to\R$, given by $p\mapsto \vol(L_p)$, are smooth.

We denote by $H(p)\in T_pM$ the mean curvature vector of the leaf $L_p$ as a submanifold of $M$, for every $p\in M_0$. Note that $H$ is a smooth vector field on $M_0$. We say $\F$ has \emph{basic mean curvature} when $H$  is $\sigma$-related to a (smooth) vector field $\underline{H}$ on $M_0/\F$. By \cite[Prop. 3.1]{AR15}, this is automatically the case when $M$ is a round sphere, or $M$ is Euclidean space and $\F$ is infinitesimal.

\subsubsection{Tubular neighborhoods and the Slice Theorem.} \label{SS:slice} Let $M$ be a closed Riemannian manifold, and $\F$ a closed singular Riemannian foliation. Fix a leaf $L$, and $\epsilon>0$ be smaller than the normal injectivity radius of $L$. Denote by $\nu L$ the normal bundle of $L$, by $\nu L^{\leq\epsilon}$ (respectively  $\nu L^{<\epsilon}$) the set of all vectors in $\nu L$ of length at most $\epsilon$ (resp. less than),  by $B_\epsilon(L)$ the (open) tube of radius $\epsilon$ around $L$, that is, the set of all points at distance less than $\epsilon$ from $L$, and by $\exp\colon \nu L \to M$ the normal exponential map. Note that the restriction of $\exp$ to $\nu L^{<\epsilon}$  is a diffeomorphism onto $B_\epsilon(L)$. We denote by $\pi\colon B_\epsilon(L)\to L$ the nearest-point projection, and note that $\pi\circ\exp$ coincides with the natural projection $\nu L\to L$ on $\nu L^{<\epsilon}$. Note also that the restriction of $\pi$ to any leaf $L'\subset B_\epsilon(L)$ is a submersion $L'\to L$. This follows from \cite[Theorem 2.2(b)]{ABT13}, see also \cite[Proposition 13]{MR19}).

Let $p\in L$. The \emph{disconnected slice foliation} (see \cite[Section 3.3]{MR19}) of $\F$ at $p$, denoted by $\F^p$, is a partition of $\nu_p L$ obtained by first intersecting the leaves of $\F$ with $\exp \nu _p L^{\leq\epsilon} $, then pulling back this decomposition by $\exp$, and finally
%\todo{Marco: replace last ``then'' with ``and finally''?}
 extending it via homothetic transformations to all of $\nu_p L$. This partition $\F^p$ of the Euclidean space $\nu_p L$  is a ``disconnected infinitesimal foliation'' in the sense of  \cite[Definition 8]{MR19}, and its restriction to any sphere centered at the origin is a ``disconnected singular Riemannian foliation'' in the sense of Definition \ref{D:disco} below. By the Slice Theorem \cite[Theorem A]{MR19}, the disconnected slice foliation $\F^q$ at any other $q\in L$ is equivalent to $\F^p$ in the sense that there exists an isometry $\nu_p L\to\nu_q L$ sending leaves onto leaves.

\subsubsection{Averaging operator.} \label{SS:av} Given a singular Riemannian foliation $(M,\F)$ with compact leaves, and a $L^2$
%\todo{$L^2$?}
function $f\colon M\to \R$, we define its average $\Av(f)\colon M\to \R$ over $\F$ by $\Av(f)(p)={1\over \vol(L_p)}\int_{q\in L_p} f(q) d\vol^{L_p}(q)$, where $d\vol^{L_p}$ denotes the Riemannian volume form of the compact Riemannian manifold $L_p$ and $\vol(L_p)$ its volume. We will frequently write $dq$ instead of $ d\vol^{L_p}(q)$ to simplify notation. When $\F$ has basic mean curvature, the averaging operator $\Av$ takes smooth functions to smooth functions, and commutes with the Laplace operator on the principal part $M_0$, see \cite[Thm 3.3 and Lemma 3.2]{LR18}, see also \cite{PR96} for similar considerations in the case of  regular Riemannian foliations.

\subsection{H\"ormander's estimate}

%[put here the parts of Sam's notes that are ``general theory'': self-adjoint operators on Hilbert spaces, projection-valued measures, spectral resolution and the spectral Theorem (from \cite{ReedSimon}), elliptic operators and H\"ormander's estimate from \cite{BF02391913}]

We briefly recall H\"ormander's estimate on spectral functions, which is one of the main analytical ingredients in the proof of Weyl's law for singular Riemannian foliations.

Let $\mathcal{H}$ be a Hilbert space with inner product $( , )_{\mathcal{H}}$ and let $A$ be a self-adjoint, possibly unbounded linear operator on $\mathcal{H}$.
Given any characteristic function $\chi_{\mathcal{U}}$ on a Borel measurable set $\mathcal{U}$ in $\mathbb{R}$, one can use Borel functional calculus to define an orthogonal projection $\chi_{\mathcal{U}}(A)$ on $\mathcal{H}$  \cite[p. 262]{ReedSimon}. 

Given any $\lam\in \mathbb{R}$, define $E(\lam):=\chi_{(-\infty, \lam)} (A)$.
The family of projections $\{E(\lam)\}$ is called the \textit{spectral resolution} of the self-adjoint linear operator $A$.

\begin{example}
Suppose that $A\colon  \mathcal{H}\to \mathcal{H}$ is a positive, self-adjoint linear operator with discrete spectrum
\[
0< \lam_1 \leq \lam_2 \leq \lam_3 \cdots  
\]
and an orthonormal eigenbasis basis $\{u_i\}$. By the spectral theorem in projection-valued measure form \cite[Theorem VIII.6]{ReedSimon}, the spectral resolution of $A$ is simply the projection to the sum of eigenspaces whose corresponding eigenvalues are less than $\lam$. In other words, 
\begin{equation} \label{eqn:DiscreteResolution}
E(\lam)(f)=\chi_{(-\infty, \lam)} (A)(f)=\sum_{\lam_i <\lam} (f, u_i)_{\mathcal{H}} \cdot u_i.
\end{equation}
\end{example}

To define spectral functions, let $\Omega$ be a smooth, $m$-dimensional Riemannian manifold with volume measure $\mu$.
%\todo{Why complete? Is this needed?}
%\todo{so $\Omega$ is Riemannian? Is $\mu$ just the Riemannian volume form? Sam: In this context, yes. But H\"{o}rmander's result holds for manifolds with densities in general.}
Let $\hat{P}\colon  L^2(\Omega, \mu)\to L^2(\Omega, \mu)$ be a formally positive self-adjoint extension of an elliptic operator $P$ on $C^{\infty}(\Omega)$. Let $\{E(\lam)\}$ be the spectral resolution of $\hat{P}$. It is a standard result \cite[Section 3.7]{BF02392492} that the ellipticity of $P$ implies that for each $t$, there exists a unique kernel $\theta(x, y;t) \in C^{\infty}(\Omega\times \Omega)$ such that 
for any $f \in L^2(\Omega, \mu)$,
\begin{equation} \label{eqn:SpectralFunction}
E(t) (f)(x)=\int_{\Omega} \theta(x, y; t)f(y) d\mu(y).
\end{equation}
The kernels $\theta(x, y; t)$ are called \textit {spectral functions}.
%\todo{Sam: I changed the bold into italic.}
H\"{o}rmander proved the following asymptotics for the spectral functions:  
\begin{theorem} \label{T:Hormander} \cite{BF02391913}
Let $p$ be the principal symbol of an elliptic differential operator $P$ of order $k$ and let $d\xi$ be the measure induced by $d\mu$ on each fiber of the cotangent bundle $T^{*}\Omega$. Define
\[
R(x, \lam):=\lam^{-m/k}\theta(x,x; \lam)-(2\pi)^{-m}\int_{B_x} d\xi,
\]
where $B_x=\{\xi\in T_{x}^{*}\Omega\,|\, p(\xi)<1\}$. Then on every compact subset of $\Omega$, $R(x, \lam)=O(\lam^{-1/m})$ uniformly as $\lam \to \infty$.
\end{theorem}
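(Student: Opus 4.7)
The plan is to follow H\"ormander's original approach: reduce the problem to a first-order operator, construct a short-time parametrix for the associated half-wave propagator via Fourier integral operators, and apply a Tauberian argument to pass from smoothed to sharp asymptotics.

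First, I would replace $\hat P$ by $A := \hat P^{1/k}$, defined through the Borel functional calculus. Then $A$ is a positive self-adjoint first-order elliptic pseudodifferential operator with principal symbol $a(x,\xi) = p(x,\xi)^{1/k}$, and the spectral resolutions are related by $E_A(\lambda) = E(\lambda^k)$, so the associated spectral function satisfies $\theta_A(x,y;\lambda) = \theta(x,y;\lambda^k)$. Consequently, it suffices to establish
\[
\theta_A(x,x;\lambda) = (2\pi)^{-m}\vol\{\xi : a(x,\xi)<\lambda\} + O(\lambda^{m-1}),
\]
uniform on compact subsets of $\Omega$, which reduces the whole problem to the better-behaved first-order case.

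Second, I would construct a short-time parametrix for the half-wave group $U(t) = e^{-itA}$. For $|t|$ below a geometrically determined time, the Schwartz kernel $K(t,x,y)$ of $U(t)$ admits a local oscillatory-integral representation
\[
K(t,x,y) \;\sim\; (2\pi)^{-m} \int e^{i\phi(t,x,y,\xi)} b(t,x,y,\xi)\, d\xi,
\]
where the phase solves the eikonal equation $\partial_t\phi + a(x,\partial_x\phi)=0$ with initial condition $\phi|_{t=0} = (x-y)\cdot\xi$, and the amplitude $b$ is built iteratively from the transport equations so that $(D_t - A)K \in C^\infty$. Next, choose a non-negative $\rho \in \mathcal{S}(\R)$ with $\int\rho=1$ and $\hat\rho$ compactly supported near $0$, so that
\[
\int_{\R} \hat\rho(t)\, e^{it\lambda}\, K(t,x,x)\, dt \;=\; \int_{\R} \rho(\lambda - \mu)\, d_\mu\theta_A(x,x;\mu).
\]
Stationary-phase analysis of the left-hand side along the parametrix produces an expansion whose leading term is $(2\pi)^{-m}\lambda^{m-1}\vol\{a(x,\cdot)=1\}$. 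Monotonicity of $\mu\mapsto \theta_A(x,x;\mu)$ then allows a standard Tauberian theorem to upgrade this smoothed asymptotic to a sharp asymptotic for $\theta_A(x,x;\lambda)$ itself; substituting $\lambda \mapsto \lambda^{1/k}$ yields the stated formula for $\theta(x,x;\lambda)$.

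The hardest part will be the parametrix construction combined with the stationary-phase analysis: one must verify that the phase $\phi$ is non-degenerate away from the zero section, that the amplitude $b$ extends to a full asymptotic expansion with controllable remainders, and that all estimates are uniform on compact subsets of $\Omega$. The Tauberian step itself is essentially routine once a smoothed asymptotic is in hand, thanks to the positivity and monotonicity of the spectral function, and the reduction to first order is formal; the genuine analytic content lives in the FIO construction.
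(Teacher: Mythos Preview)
The paper does not prove this theorem at all: it is stated with the citation \cite{BF02391913} and used as a black box, so there is no ``paper's own proof'' to compare against. Your outline is a faithful sketch of H\"ormander's original argument (reduction to a first-order operator via $A=\hat P^{1/k}$, FIO parametrix for $e^{-itA}$, stationary phase, and a Tauberian step), and as such it is a reasonable plan for reproducing the cited result; but for the purposes of this paper no proof is expected or supplied.
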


\subsection{Basic Laplacian}\label{SS:Basic-Lap}
%\todo{Should this whole subsection be moved out of ``Preliminaries'', and into Section \ref{S:Weyl}? Sam: Sounds good, and we need to move subsection 2.4 as well, as the notations are from this subsection.}
Let $(M, g)$ be a compact Riemannian manifold, $\F$ a closed singular Riemannian foliation with basic mean curvature, and let $\Delta(f):=-\mathrm{div} \nabla (f)$ be the Laplace operator on functions.
%Following \cite{LytchakRadeschi}, define the averaging operator $A: L^2(M) \to L^2(M)^b$ as
%\[
	%A(f)(x)=\frac{1}{\vol(L_{\sigma^{-1}(x)})}\int_{L_\sigma^{-1}(x)} f\, d\vol,
%\]
%where $L^2(M)^b$ is the subpace of basic functions in $L^2(M)$.
Recall from Section \ref{SS:av} that the averaging operator $\Av$ commutes with the Laplace operator $\Delta$ on $C^{\infty}(M)$. Therefore, the Laplace operator $\Delta$ sends basic smooth functions to basic smooth functions. We then extend $\Delta |_{C^{\infty}(M)}$ to obtain a self-adjoint linear operator $R\colon  L^2(M)^b \to L^2(M)^b$ with domain $\mathcal{D}(R)=H^2(M)^{b}$.

Let $M_0$ be the principal part of $M$ and let $X_0:=\sigma (M_0)$ be the manifold part of $X$ such that $\sigma\colon  M_0\to X_0$ is a Riemannian submersion. Given any point $x\in X_0$, define $h(x)$ to be the volume of the leaf $L_{\sigma^{-1}(x)}$, and recall from Section \ref{SS:princ} that $h$ is a smooth function.
By Fubini's theorem for Riemannian submersions, the pull-back of $\sigma$ is a Hilbert space isometry $\sigma^{*}\colon  L^{2}(X_0, h) \to L^2(M_0)^b$, where $L^{2}(X_0, h)$ is the set of squared-integrable functions on $X_0$ with respect to the measure $h(x)\,d\mathrm{vol}_{X_0}$.

Let $\sigma_{*}$ be the inverse of $\sigma^{*}$. Define $T\colon  L^{2}(X_0, h) \to L^{2}(X_0, h)$ 
as
\[
	T:= \sigma_{*} \circ R\circ \sigma_{*}^{-1}.
\]
A simple calculation proves the following lemma that relates the operator $T$ to the Laplace operator on $X_0$:
\begin{lemma} \label{L:SymbolT}
For any $f\in C^{\infty}(X_0) \cap L^{2}(X_0, h)$, we have 
\[
	T(f)=\Delta_{X_0} f+ \underline H \, f, 
\]
where $\underline H$ is the projection of the mean curvature vector field $H$ through $\sigma$.
\end{lemma}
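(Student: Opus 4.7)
The plan is a direct computation that reduces to the standard Laplacian formula for a basic function over a Riemannian submersion. Setting $\tilde f := \sigma^* f = f\circ\sigma$, the definitions give $T(f) = \sigma_*(\Delta_M \tilde f)$, so the lemma is equivalent to the pointwise identity
\[
\Delta_M \tilde f \;=\; \bigl(\Delta_{X_0} f + \underline H\, f\bigr)\circ \sigma
\]
on the principal part $M_0$, where $\sigma\colon M_0 \to X_0$ is a Riemannian submersion and $\underline H\, f$ denotes the vector field $\underline H$ applied to $f$.

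To derive this identity, work at a point $p\in M_0$ with a local orthonormal frame $\{e_1,\dots,e_m, v_1,\dots,v_k\}$ in which the $e_i$ are projectable horizontal vectors (whose $\sigma$-images form an orthonormal frame of $X_0$ near $\sigma(p)$) and the $v_j$ span the tangent space to the leaf $L_p$. Since $\tilde f$ is basic, $\nabla^M \tilde f$ is horizontal and is the horizontal lift of $\nabla^{X_0} f$. Expanding $\Delta_M \tilde f = -\mathrm{div}_M(\nabla^M \tilde f)$ in this frame, the horizontal summands yield $(\Delta_{X_0} f)\circ\sigma$ by the O'Neill relations for a Riemannian submersion (the vertical O'Neill component is orthogonal to the horizontal vector $\nabla^M \tilde f$ and contributes nothing). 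For the vertical summands, differentiating the identity $g(\nabla^M \tilde f, v_j) = 0$ in the direction $v_j$ gives
\[
g\bigl(\nabla^M_{v_j}\nabla^M \tilde f,\, v_j\bigr) \;=\; -\, g\bigl(\nabla^M \tilde f,\; \nabla^M_{v_j} v_j\bigr),
\]
and summing over $j$, while recognizing $\sum_j \bigl(\nabla^M_{v_j} v_j\bigr)^{\mathrm{hor}} = H$ as the mean curvature vector of $L_p$, the total vertical contribution to $-\mathrm{div}_M(\nabla^M \tilde f)$ becomes $g(\nabla^M \tilde f, H) = H(\tilde f)$. Altogether,
\[
\Delta_M \tilde f \;=\; (\Delta_{X_0} f)\circ\sigma + H(\tilde f).
\]

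The basic mean curvature hypothesis then closes the argument: by the definition of ``$\sigma$-related'' recalled in Section \ref{SS:princ}, $H(\tilde f) = H(f\circ\sigma) = (\underline H\, f)\circ\sigma$, which together with the previous display and an application of $\sigma_*$ gives $T(f) = \Delta_{X_0} f + \underline H\, f$. No significant obstacle is expected: the derivation is a textbook computation once the O'Neill framework is in place, and the only ingredient specific to the singular-foliation setting is the hypothesis that $\F$ has basic mean curvature, which is invoked precisely at the last step to define $\underline H$ on $X_0$ and to push the identity down through $\sigma$. The only minor bookkeeping is keeping the sign conventions $\Delta = -\mathrm{div}\nabla$ and $H = \mathrm{tr}(II)$ straight.
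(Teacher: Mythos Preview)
Your argument is correct and is exactly the ``simple calculation'' the paper alludes to without writing out: the paper does not give a detailed proof, merely stating that a direct computation yields the formula. Your use of an adapted orthonormal frame, the O'Neill relation for the horizontal block, and the identity $g(\nabla^M_{v_j}\nabla^M\tilde f,v_j)=-g(\nabla^M\tilde f,\nabla^M_{v_j}v_j)$ to produce the $H(\tilde f)$ term is the standard derivation, and the final step invoking $\sigma$-relatedness of $H$ and $\underline H$ is precisely how the basic mean curvature hypothesis enters.
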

By Lemma \ref{L:SymbolT}, the operator $T$ is a self-adjoint extension of an elliptic differential operator, whose principal symbol is the same as that of the Laplace operator on $X_0$. 

\begin{remark} \label{R:Eigenbasis}
As $M$ is compact, the Laplace operator has discrete spectrum:
\[
0=\tilde{\lam}_0<\tilde{\lam}_1\leq \tilde{\lam}_2 \cdots \leq \tilde{\lam}_k\leq\cdots.
\]
Since $T\colon  L^{2}(X_0, h) \to L^{2}(X_0, h)$ is equivalent to $R\colon  L^2(M)^b \to L^2(M)^b$, it must have a discrete spectrum:
\[
0=\lam_0<\lam_1\leq \lam_2 \cdots \leq \lam_i\leq\cdots,
\]
and we can define the counting function
\[
N(t)=\max\{k\mid \lambda_k<t\}.
\]
Furthermore, since the Laplace operator commutes with $\Av$, there exists a smooth, orthonormal basis of eigenfunctions $\{\phi_k\}$ for $L^2(M)$ with a subsequence $\{\phi_{k_i}\}$ that generates the vector subspace $L^2(M)^{b}$.

Define $\varphi_i:=\sigma_{*}(\phi_{k_i})$. Then $\{\varphi_i\}$ form a smooth orthonormal basis of $T$-eigenfunctions for $L^2(X_0, h)$.
\end{remark}

\begin{proposition} \label{P:SpectralFunctionExp}
Let $\theta^{T}(x, y; t)$ and $\theta^{\Delta}(x, y; t)$ be the spectral function of $T$ and $\Delta$ respectively and let $\{\phi_k\}$ and $\{\varphi_i\}$ be the smooth orthonormal bases in Remark \ref{R:Eigenbasis}. Then for each $t\in \mathbb{R}$, $p, q\in M$ and $x, y \in X_0$ we have the following:
\begin{enumerate}[(a)]
\item
%\todo{Ricardo: I changed (1) and (2) to (a) and (b) to avoid confusion in the proof. Sam: Sounds good.}
$\theta^{T}(x, y; t)=\sum_{\lam_i<t} \varphi_i(x) \varphi_i(y)$ and
$\theta^{\Delta}(p, q; t)=\sum_{\tilde{\lam}_k<t} \phi_k(p) \phi_k(q)$.

\item For each pair of $p, q \in M$ and each $t\in \mathbb{R}$,
\[
\theta^{T}(\sigma(p), \sigma(q) ; t)= (\Av \otimes \Av) \theta^{\Delta} (p, q; t).
\]
\end{enumerate}
\end{proposition}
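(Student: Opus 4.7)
\textbf{Proof plan for Proposition \ref{P:SpectralFunctionExp}.}

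For part (a), the plan is to specialize the general spectral-function identity \eqref{eqn:SpectralFunction} to the discrete-spectrum situation laid out in Remark \ref{R:Eigenbasis}. By the spectral theorem in projection-valued measure form, applied as in \eqref{eqn:DiscreteResolution}, the spectral projection of $T$ acts by $E_T(t)(f) = \sum_{\lambda_i < t} (f,\varphi_i)_{L^2(X_0,h)}\,\varphi_i$, and similarly for $\hat\Delta$ with basis $\{\phi_k\}$. Because the sums are finite for each fixed $t$, I can interchange sum and integral to rewrite $E_T(t)(f)(x) = \int\bigl(\sum_{\lambda_i<t}\varphi_i(x)\varphi_i(y)\bigr)\,f(y)\,h(y)\,d\mathrm{vol}_{X_0}(y)$, and by uniqueness of the kernel, this identifies $\theta^T(x,y;t)$; the argument for $\theta^\Delta$ is identical.

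For part (b), I would start from the formula for $\theta^\Delta$ given by (a) and apply $\Av\otimes\Av$ termwise, which is legal because the sum is finite:
\[
(\Av\otimes\Av)\theta^{\Delta}(p,q;t) = \sum_{\tilde\lambda_k < t} \Av(\phi_k)(p)\,\Av(\phi_k)(q).
\]
The key observation is that $\Av$ is the orthogonal projection $L^2(M)\to L^2(M)^b$. Self-adjointness follows from Fubini on the principal part $M_0$ (full measure): one checks that $(\Av f,g)_{L^2(M)} = \int_{X_0}\Av(f)\,\Av(g)\,h\,d\mathrm{vol}_{X_0}$, which is symmetric in $f$ and $g$; and idempotence is clear since $\Av f$ is basic. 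Hence $\Av(\phi_{k_i}) = \phi_{k_i}$, while for $k\notin\{k_i\}$ the function $\phi_k$ lies in $(L^2(M)^b)^\perp = \ker\Av$, so $\Av(\phi_k)=0$. The displayed sum then collapses to a sum over the basic sub-sequence.

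To finish, I would identify the surviving terms with $\theta^T$. By construction $\varphi_i = \sigma_*(\phi_{k_i})$, so $\phi_{k_i} = \varphi_i\circ\sigma$ on $M_0$ and, by continuity of both sides, on all of $M$. Since $T = \sigma_*\circ R\circ\sigma_*^{-1}$ and $R$ is the restriction of $\hat\Delta$ to basic functions, one has $\lambda_i = \tilde\lambda_{k_i}$, so the condition $\tilde\lambda_{k_i}<t$ is the same as $\lambda_i<t$. Substituting these identifications into the previous display produces precisely $\sum_{\lambda_i<t}\varphi_i(\sigma(p))\varphi_i(\sigma(q)) = \theta^T(\sigma(p),\sigma(q);t)$, as required. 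The main subtlety I expect is justifying self-adjointness of $\Av$ cleanly (so that $\phi_k$ for $k\notin\{k_i\}$ is killed by $\Av$); everything else is bookkeeping once the eigenbasis has been organized as in Remark \ref{R:Eigenbasis}.
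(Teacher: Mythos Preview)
Your proposal is correct and follows essentially the same approach as the paper: part (a) via the discrete spectral resolution \eqref{eqn:DiscreteResolution} and uniqueness of the kernel in \eqref{eqn:SpectralFunction}, and part (b) by matching the two eigenfunction sums term by term through the identifications $\phi_{k_i}=\sigma^*\varphi_i$ and $\Av\phi_k=0$ for $k\notin\{k_i\}$. The only cosmetic difference is that the paper runs the chain of equalities starting from $\theta^T(\sigma(p),\sigma(q);t)$ and ending at $(\Av\otimes\Av)\theta^\Delta$, while you go in the opposite direction and spell out explicitly (via self-adjointness and idempotence of $\Av$) why the non-basic $\phi_k$ are annihilated; the paper leaves that step implicit in Remark~\ref{R:Eigenbasis}.
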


\begin{proof}
By \eqref {eqn:DiscreteResolution}, $\sum_{\lam_i<t} \varphi_i(x) \varphi_i(y)$ and $\sum_{\tilde{\lam}_k<t} \phi_k(p) \phi_k(q)$ are two smooth functions satisfying \eqref{eqn:SpectralFunction}. Hence (a) follows from the uniqueness of spectral functions.

To prove (b), simply note that
\begin{equation*}
\begin{split}
\theta^{T}(\sigma(p), \sigma(q) ; t)&=\sum_{\lam_i<t} \varphi_i (\sigma(p)) \varphi(\sigma(q))\\
&=\sum_{\lam_i<t} \sigma^{*}\varphi_i (p) \sigma^{*}\varphi_i (q)\\
&=\sum_{\tilde{\lam_k}<t} \Av\phi_k(p) \Av\phi_k(q)= (\Av \otimes \Av) \theta^{\Delta} (p, q; t).
\end{split}
\end{equation*}
\end{proof}

By applying Theorem \ref{T:Hormander} to the spectral function of $T$, we have the following:
\begin{proposition} \label{P:HormanderT}
Let $\theta^{T}(x, y; t)$ be the spectral function of the elliptic differential operator that generates $T$. Then for each $x\in X_0$, 
\[
\theta^{T}(x, x; t) \sim t^{m/2}\frac{\omega_{m}}{h(x)(2\pi)^m},
\]
as $t\to\infty$, where $\omega_m$ is the volume of unit balls in $\mathbb{R}^m$.
\end{proposition}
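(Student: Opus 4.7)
The plan is to apply H\"ormander's Theorem \ref{T:Hormander} to the operator $T$. By Lemma \ref{L:SymbolT}, $T$ is a self-adjoint extension of an elliptic differential operator of order $k=2$ whose principal symbol coincides with that of $\Delta_{X_0}$, namely $p(\xi) = |\xi|^2_g$. In principle this is exactly the input required by H\"ormander's formula.

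The subtle point is that $T$ is self-adjoint on the weighted space $L^2(X_0, h)$, i.e.\ with respect to $d\mu = h\, d\mathrm{vol}_{X_0}$ rather than the Riemannian volume itself, and this choice of measure influences the normalization of the ``measure induced on the cotangent fibers'' appearing in H\"ormander's asymptotic. To sidestep this, I would pass to an equivalent operator on $L^2(X_0, d\mathrm{vol}_{X_0})$ via conjugation: the map $U \colon L^2(X_0, h) \to L^2(X_0, d\mathrm{vol}_{X_0})$ defined by $U(f) = \sqrt{h}\, f$ is a unitary isomorphism, and $\tilde T := U T U^{-1}$ is self-adjoint with the same spectrum as $T$. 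Since multiplication by the positive smooth function $\sqrt{h}$ is of order zero, $\tilde T$ remains elliptic of order $2$ with unchanged principal symbol $|\xi|^2_g$.

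With this reduction, H\"ormander's theorem applied to $\tilde T$ on $(X_0, d\mathrm{vol}_{X_0})$ yields
\[
\theta^{\tilde T}(x, x; t) \sim t^{m/2} \frac{\omega_m}{(2\pi)^m},
\]
because the induced measure on each fiber $T_x^* X_0$ is the standard Lebesgue measure and $B_x$ is the Riemannian unit ball, of Lebesgue volume $\omega_m$. On the other hand, $\{\sqrt{h}\, \varphi_i\}$ is an orthonormal eigenbasis of $\tilde T$ in $L^2(X_0, d\mathrm{vol}_{X_0})$ with the same eigenvalues $\lam_i$, so by the same reasoning as in Proposition \ref{P:SpectralFunctionExp}(a),
\[
\theta^{\tilde T}(x, x; t) = \sum_{\lam_i < t} \bigl(\sqrt{h(x)}\, \varphi_i(x)\bigr)^2 = h(x)\, \theta^T(x, x; t).
\]
Combining these two identities gives the desired $\theta^T(x, x; t) \sim t^{m/2}\, \omega_m / (h(x)(2\pi)^m)$.

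I do not expect a serious obstacle here; the only care needed is in performing the unitary conjugation correctly so as to reduce to the standard-measure setting of H\"ormander's theorem, and in verifying that conjugation by the zeroth-order multiplication operator $\sqrt{h}$ preserves ellipticity and the principal symbol of $T$.
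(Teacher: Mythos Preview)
Your proof is correct. The paper offers no proof beyond the sentence ``By applying Theorem~\ref{T:Hormander} to the spectral function of $T$, we have the following,'' i.e.\ it invokes H\"ormander's theorem directly on $T$ acting on $L^2(X_0, h\,d\mathrm{vol}_{X_0})$ and relies on the convention that the fiber measure $d\xi$ induced by $d\mu = h\,d\mathrm{vol}_{X_0}$ is $h(x)^{-1}$ times the Riemannian fiber measure, so that $\int_{B_x} d\xi = h(x)^{-1}\omega_m$. Your route---conjugating by $\sqrt{h}$ to reduce to the unweighted $L^2(X_0, d\mathrm{vol}_{X_0})$, applying H\"ormander there, and then using $\theta^{\tilde T}(x,x;t) = h(x)\,\theta^T(x,x;t)$---is a standard and equally valid alternative that trades a convention about induced fiber measures for an explicit unitary equivalence. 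The payoff of your approach is that it makes the origin of the factor $h(x)^{-1}$ completely transparent and avoids any ambiguity about what ``the measure induced by $d\mu$ on each fiber'' means; the paper's approach is shorter but assumes the reader knows that convention.
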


\subsection{Heat Kernels}\label{SS:heat-K}
Let $\phi_i(x)$ be the orthonormal eigenbasis for $\Delta$ constructed in Remark \ref{R:Eigenbasis}. A well-known result by Minakshisundaram and Pleijel \cite{MR31145} (see also \cite[Theorem 7.2.18]{MR2742784}) states that, for each $\sigma>\dim(M)$, the Dirichlet series $\sum_i  \lambda_{i}^{-\sigma} \phi_i(p)\phi_i(q)$ converges uniformly and absolutely on $M\times M$. Combining with (a) in Proposition \ref{P:SpectralFunctionExp}, we see that the spectral function $\theta^{\Delta}(p, q; t)$ has uniform polynomial growth, in the sense that there exists positive constants $C$ and $N$ such that for all $p, q\in M$,
%\todo{Marco: can you please explain to me how this inequality is obtained?}
\[
	|\theta^{\Delta}(p, q; t)|<Ct^N.
\]
By using (a) and (b) in Proposition \ref{P:SpectralFunctionExp}, we see that $\theta^{T}(x, y; t)$ also has uniform polynomial growth.
\todo{Sam: I rephrased this paragraph to make the notations consistent.}
As a consequence, for any $s>0$, $x, y \in X_0$, and $p, q\in M_0$, one can take the Laplace transform of $\theta^{\Delta}$ and $\theta^{T}$ to define
\begin{equation} \label{eqn:HeatKernelDef}
	\begin{split}
	\Gamma_{s}^{T}(x, y)&:=s\int_{0}^{\infty} e^{-st} \theta^{T} (x, y; t) \, dt, \, \text{ and }\\
	\Gamma_{s}^{\Delta}(p, q)&:=s\int_{0}^{\infty} e^{-st} \theta^{\Delta} (p, q; t) \, dt.
	\end{split}
\end{equation}

The functions $\Gamma_{s}^{T}(x, y)$ and $\Gamma_{s}^{\Delta}(p, q)$ are called the heat kernels and they are the Schwartz kernels for the operators $e^{-sT}$ and $e^{-s\Delta}$ respectively (see \cite[p. 181]{BH78}). Note that $\Gamma_{s}^{\Delta}(p, q)$ coincides with the fundamental solution to the heat equation on $M$. The parametrix construction for the fundamental solutions of heat equations, which was first introduced in \cite{MR31145}, immediately implies the following estimate:
\begin{proposition} \label{P:HeatKernelEstM}
There exists positive constants $C_1$ and $C_2$ such that for any pair of points $p, q \in M^n$,
\[
	|\Gamma_s^{\Delta} (p, q) |\leq C_1s^{-\frac{n}{2}} \,e^{-C_2 \frac{d_M^2(p, q)}{s}}.
\]
\end{proposition}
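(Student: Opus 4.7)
The plan is to separate the estimate into two regimes depending on whether $d_M(p,q)$ is smaller or larger than a fixed fraction of the injectivity radius $r_0>0$ of the compact manifold $M$.

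Near the diagonal, i.e.\ for $d_M(p,q)<r_0$, I would invoke the classical Minakshisundaram--Pleijel parametrix construction: one builds a formal solution
\[
H_N(p,q,s)=(4\pi s)^{-n/2}\exp\!\left(-\tfrac{d_M(p,q)^2}{4s}\right)\sum_{k=0}^N s^k u_k(p,q)
\]
where $u_0(p,p)=1$ and the remaining $u_k$ are determined recursively along geodesics so that $(\partial_s+\Delta_p)H_N$ is an error of order $s^{N-n/2}$ times a Gaussian in $d_M(p,q)$. Duhamel's principle then writes the true heat kernel as $H_N$ plus a convolution of $H_N$ with this error; choosing $N$ large enough and absorbing a small fraction of the Gaussian exponent into the error term yields a bound of the form $C_1 s^{-n/2}\exp(-C_2 d_M(p,q)^2/s)$ uniformly on the set $\{d_M(p,q)<r_0\}\times (0,1]$. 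The $u_k$ and all error terms are smooth on the compact set $\overline{B_{r_0}(\mathrm{diag})}$, so the constants are uniform.

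For pairs with $d_M(p,q)\geq r_0$, I would note that the off-diagonal bound reduces to the on-diagonal bound, because by compactness $d_M(p,q)^2/s$ is bounded by a constant times $1/s$, while the standard on-diagonal estimate already provides $|\Gamma_s^\Delta(p,q)|\leq C s^{-n/2}e^{-c/s}$ for small $s$ via the same parametrix applied along any minimizing geodesic (or alternatively via a Davies--Gaffney integrated maximum principle, or the spectral expansion together with the uniform polynomial growth of $\theta^\Delta$ noted in the previous paragraph, which handles the complementary range $s\geq 1$). Combining the two regimes, one then readjusts the constants $C_1,C_2$ to obtain a single Gaussian upper bound valid for all $p,q\in M$ and all $s>0$.

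The main obstacle in a fully self-contained argument is controlling the error in the parametrix uniformly up to the cut locus: distance-squared is no longer smooth at the cut locus, so one must either truncate $H_N$ by a cutoff $\chi(d_M(p,q)/r_0)$ and absorb the commutator terms into the error, or use a comparison argument against the Euclidean heat kernel in normal coordinates. Since the paper cites \cite{MR31145} for this construction, I would present the argument by quoting these standard facts and focusing only on how they combine to give the clean Gaussian form stated, rather than grinding through the recursive construction of the $u_k$.
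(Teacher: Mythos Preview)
Your outline is correct and is precisely the standard parametrix argument underlying the result; the paper itself does not give an independent proof but simply cites equation (45) on p.~154 of Chavel \cite{MR768584}, which is obtained exactly via the Minakshisundaram--Pleijel construction you describe. One small wrinkle: in your far-from-diagonal regime you write that ``the on-diagonal estimate already provides $|\Gamma_s^\Delta(p,q)|\leq C s^{-n/2}e^{-c/s}$,'' but the on-diagonal asymptotic has no exponential decay, so this step really does need the cutoff-parametrix or Davies--Gaffney argument you mention as an alternative; once you commit to one of those, the rest goes through as you say.
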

\begin{proof}
The proposition directly follows from equation (45) on p. 154 of \cite{MR768584}.
\end{proof}

\subsection{Tauberian and Abelian Theorem}
Given two real-valued functions $f(t)$ and $g(t)$. We write $f(t)\sim g(t)$ as $t\to \infty$ (resp. $t\to 0$) if 
$\lim_{t\to \infty} f(t)/g(t)=1$ (resp. $\lim_{t\to 0} f(t)/g(t)=1$). We state the Tauberian theorem and Abelian theorem together below.
\begin{theorem}\cite[Theorem 2 on p. 445]{Feller}
\label{T:Tauberian} 
Let $F\colon  [0, \infty)\to \mathbb{R}$ be a function with bounded variation such that Laplace transform $f(s)=\int_0^\infty e^{-st}F(t)dt$ exists. 
Then for fixed $\alpha\in [0,\infty)$ and $C\in \mathbb{R}$,
%\todo{Can $\alpha$ be any real number?}
 each of the relations
\begin{equation} \label{eqn:LaplaceF}
sf(s)\sim {C\over s^{\alpha}}\qquad s\to 0
\end{equation}
and
\begin{equation}\label{eqn:F}
F(t)\sim {C\over \Gamma(\alpha+1)}t^\alpha\qquad t\to \infty
\end{equation}
implies the other.
\end{theorem}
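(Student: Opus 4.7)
The plan is to establish the two implications separately, the direction \eqref{eqn:F}$\Rightarrow$\eqref{eqn:LaplaceF} (Abelian) being elementary and the direction \eqref{eqn:LaplaceF}$\Rightarrow$\eqref{eqn:F} (Tauberian) constituting the real content of the theorem. For the Abelian direction, starting from $F(t)\sim \frac{C}{\Gamma(\alpha+1)}t^\alpha$ I would perform the change of variable $u=st$ in the Laplace transform to rewrite
\[
s^{\alpha}\cdot sf(s)=\int_0^\infty e^{-u}\,s^{\alpha}F(u/s)\,du,
\]
and then pass to the limit $s\to 0^{+}$ inside the integral via dominated convergence. The pointwise integrand converges to $\frac{C}{\Gamma(\alpha+1)}e^{-u}u^{\alpha}$, and the bounded variation hypothesis on $F$ together with the asymptotic $F(t)=O(t^\alpha)$ produces an integrable majorant of the form $\mathrm{const}\cdot e^{-u}(u^\alpha+1)$. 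The conclusion then follows from the identity $\Gamma(\alpha+1)=\int_0^\infty e^{-u}u^{\alpha}\,du$.

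For the Tauberian direction I would follow Karamata's classical argument by polynomial approximation. Writing the hypothesis as $s^{\alpha+1}f(s)\to C$ as $s\to 0^{+}$ and applying it with $ks$ in place of $s$ yields, for every integer $k\ge 1$,
\[
\lim_{s\to 0^{+}} s^{\alpha+1}\!\int_0^\infty e^{-kst}F(t)\,dt=\frac{C}{k^{\alpha+1}}.
\]
Linearity extends this limit to every polynomial $P(y)$ with $P(0)=0$, giving
\[
\lim_{s\to 0^{+}} s^{\alpha+1}\!\int_0^\infty P(e^{-st})F(t)\,dt=\frac{C}{\Gamma(\alpha+1)}\!\int_0^\infty P(e^{-u})\,u^{\alpha}\,du.
\]
The key step is to approximate, in $L^1$ with respect to the weight $e^{-u}u^{\alpha}\,du$, the discontinuous function $y\mapsto \mathbf{1}_{[e^{-1},1]}(y)/y$ by such polynomials, using Weierstrass approximation after regularization. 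Substituting $y=e^{-st}$ translates this into a control of $F$ on intervals of the form $[0,1/s]$, from which $F(t)\sim \frac{C}{\Gamma(\alpha+1)}t^\alpha$ follows by letting $s=1/t\to 0^{+}$.

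The main obstacle is that Karamata's argument is most naturally stated for \emph{monotone} $F$, while here $F$ is only assumed to have bounded variation. To bridge this gap I would write $F=F^{+}-F^{-}$ as the difference of its positive and negative variations; each of these is non-decreasing, and their Laplace transforms exist. The delicate point is that the hypothesis controls only the combination $sf^{+}(s)-sf^{-}(s)$, not the two pieces separately, so additional one-sided estimates (or the slow-decrease hypothesis in Feller's formulation, which is automatic for functions of bounded variation with finite total variation on bounded intervals) are required to conclude that the Weierstrass approximation argument can be applied to each piece. This technical passage is precisely the content of Chapter XIII, Section 5 of \cite{Feller}, and I would simply invoke it at the end of the argument rather than reprove it.
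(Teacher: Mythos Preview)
The paper does not prove this theorem at all; it is stated as a quotation from \cite{Feller} and used as a black box. There is therefore no ``paper's own proof'' to compare your attempt against. Your sketch of the Abelian direction via the substitution $u=st$ and dominated convergence, and of the Tauberian direction via Karamata's polynomial-approximation method, is the standard route and is broadly correct as an outline; since you yourself end by invoking the relevant section of \cite{Feller} for the passage from monotone to bounded-variation $F$, your proposal effectively reduces to the same citation the paper makes.
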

Historically, the implication from \eqref{eqn:F} to \eqref{eqn:LaplaceF} is called an Abelian theorem, and the converse is called a Tauberian theorem \cite[p. 445]{Feller}.

\section{Geometric Estimate} \label{S:geometricestimate}
The main goal of this section is to prove Theorem \ref{MT:geometricestimate} by induction on the dimension of $M$. For such an inductive argument to work, we will, in fact, prove a more general statement (Theorem \ref{T:geometricestimate} below), that applies to \emph{disconnected} singular Riemannian foliations. First, we need the following definition.

\todo{New definition added}
\begin{definition} \label{D:lift-iso}
Given a closed singular Riemannian foliation $(M, \F)$, an isometry of $M/\F$ is called \emph{liftable} if it is induced by an isometry of $M$ that permutes the leaves of $\F$.
\end{definition}
\todo{Definition changed}
%\begin{definition} \label{D:disco}
%\todo{Marco: I feel like this is too much for one definition, especially when some concepts like liftable isometries are defined later in the remark}
%Let $M$ be a compact Riemannian manifold. A \emph{disconnected singular Riemannian foliation} (with closed leaves) on $M$ is a partition $\F$ of $M$ into sets of the form $\sigma_0^{-1}(\Gamma \cdot x)$, where $\sigma_0\colon M\to M/\F_0$ is the natural projection map onto the leaf space of a closed singular Riemannian foliation $\F_0$ of $M$, $x$ ranges over points in the leaf space $M/\F_0$, and $\Gamma$ is a finite group acting by liftable isometries on $M/\F_0$.
%\end{definition}
\begin{definition} \label{D:disco}
Let $(M,\F_0)$ be a closed singular Riemannian foliation with canonical projection $\sigma_0: M\to M/\F_0$, and let $\Gamma$ be a finite group acting on $M/\F_0$ by liftable isometries. The \emph{disconnected singular Riemannian foliation} (with closed leaves) on $M$ induced by $(\F_0, \Gamma)$ is the partition $\F$ of $M$ into sets of the form $\sigma_0^{-1}(\Gamma \cdot x)$, for $x\in M/\F_0$, which are called the \emph{leaves of $\F$}.
\end{definition}
Given a singular Riemannian foliation with disconnected leaves as above, we observe that its leaf space is $M/\F=(M/\F_0)/\Gamma$, and the canonical projection $\sigma:M\to M/\F$ is the composition of $\sigma_0:M\to M/\F_0$ with the quotient map $M/\F_0\to (M/\F_0)/\Gamma$.
\todo{Example added}
\begin{example}
Given a compact disconnected Lie group $G$ acting on a Riemannian manifold $M$ by isometries, the partition into $G$-orbits is an example of disconnected singular Riemannian foliation. In this case $\F_0$ is the orbit decomposition into $G_0$-orbits, where $G_0$ is the identity component of $G$, $\Gamma=G/G_0$ and the action of $G/G_0$ on $M/G_0$ is the natural one.
\end{example}

\begin{remark}
\begin{itemize}
\item While the isometries in $\Gamma$ are required to be liftable, we do not assume that the entire action of $\Gamma$ lifts to an action on $M$. This is in general not the case even in the example above.
\item Definition \ref{D:disco} is very similar to \cite[Definition 8]{MR19}, with the exceptions that the latter allows for $\F$ and $\F_0$ to have non-closed leaves, and the total space is assumed to be Euclidean space.
\item 
The key fact we will use below is that any slice foliations of any closed singular Riemannian foliation is a disconnected singular Riemannian foliation in the sense of Definition \ref{D:disco}, see Section \ref{SS:slice}.
\end{itemize}
\end{remark}

The following theorem generalizes Theorem \ref{MT:geometricestimate}:

\begin{theorem}\label{T:geometricestimate}
Given a compact connected Riemannian manifold $M$ and a disconnected singular Riemannian foliation $\F$ with closed leaves, there exists a constant $C=C(M,\F)$ such that, for every $p\in M$, and every $s>0$, one has 
\begin{equation} \label{E:geometricestimate}
\int_{L_p}\exp\left(-{d_M(p,q)^2\over s}\right)dq\leq Cs^{\dim L_p\over 2},
\end{equation}
where the integral is computed with respect to the Riemannian volume form of $L_p$.
\end{theorem}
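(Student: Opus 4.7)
The plan is to prove Theorem \ref{T:geometricestimate} by induction on $n = \dim M$, with trivial base case $n=0$. The reason the theorem is formulated for \emph{disconnected} singular Riemannian foliations — rather than proving Theorem \ref{MT:geometricestimate} directly — is precisely to make this induction go through: even when $\F$ has connected leaves, the slice foliations that arise upon localization are in general disconnected (see Section \ref{SS:slice}).

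For the inductive step, cover $M$ by finitely many tubular neighborhoods $U_\alpha = B_{\epsilon_\alpha}(L_\alpha^*)$ of reference leaves $L_\alpha^*\in\F$, with $\epsilon_\alpha$ smaller than the normal injectivity radius and small enough that, in Fermi coordinates on $U_\alpha$, the ambient distance admits a uniform lower bound
\[
d_M(p,q)^2 \ge c_\alpha\Bigl[d_{L_\alpha^*}\bigl(\pi_\alpha(p),\pi_\alpha(q)\bigr)^2 + \|v_p - P_\alpha v_q\|^2\Bigr]
\]
in terms of its horizontal and vertical components, where $\pi_\alpha$ is the nearest-point projection, $v_p,v_q$ are the normal exponential coordinates of $p,q$, and $P_\alpha$ is parallel transport in $\nu L_\alpha^*$. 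A preliminary Fubini argument in each tube, together with compactness of slice leaves, also yields a uniform volume bound $\vol(L)\le V$ for all leaves $L$ of $\F$. This already settles the case $s\ge 1$, since $\int_{L_p} e^{-d_M(p,q)^2/s}\,dq \le V \le V s^{\dim L_p/2}$.

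For small $s$, fix $p$ and choose $\alpha_0$ with $B_r(p)\subset U_{\alpha_0}$, where $r$ is a Lebesgue number of the cover. The exterior contribution $\int_{L_p\setminus B_r(p)} e^{-d_M(p,q)^2/s}\,dq \le V e^{-r^2/s}$ decays faster than any positive power of $s$ as $s\to 0^+$, hence is easily absorbed. For the interior piece, apply Fubini to the submersion $\pi_{\alpha_0}\colon L_p\cap U_{\alpha_0}\to L_{\alpha_0}^*$ (with bounded Jacobian) and use the displayed distance estimate to factor
\[
\int_{L_p\cap U_{\alpha_0}} e^{-d_M(p,q)^2/s}\,dq \le C'\int_{L_{\alpha_0}^*} e^{-c\,d_{L_{\alpha_0}^*}(\pi_{\alpha_0}(p),p^*)^2/s}\left(\int_{F_{p^*}} e^{-c\|v_p^{p^*}-v_q\|^2/s}\,dv_q\right)d\vol_{L_{\alpha_0}^*}(p^*),
\]
where $F_{p^*}=\exp_{p^*}^{-1}\bigl(L_p\cap \pi_{\alpha_0}^{-1}(p^*)\bigr)$ and $v_p^{p^*}$ is the parallel translate of $v_p$. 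By the Slice Theorem, $F_{p^*}$ is a leaf of a disconnected SRF on the sphere $S_{r_0}\subset\nu_{p^*}L_{\alpha_0}^*$ of radius $r_0 = d_M(L_p,L_{\alpha_0}^*)\le\epsilon_{\alpha_0}$ (a constant independent of $p^*$ by equidistance of leaves, so that $v_p^{p^*}$ itself lies in $S_{r_0}$ and chord distance is comparable to intrinsic sphere distance). Since $\dim S_{r_0} = n-\dim L_{\alpha_0}^* - 1 < n$, the inductive hypothesis bounds the inner integral by $C_1 s^{\dim F_{p^*}/2}$. Similarly, the trivial foliation $\{L_{\alpha_0}^*\}$ on the compact manifold $L_{\alpha_0}^*$ (of dimension $<n$, unless $\F=\{M\}$, which reduces to the classical Gaussian estimate on a compact Riemannian manifold) lets induction control the outer integral by $C_2 s^{\dim L_{\alpha_0}^*/2}$. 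Multiplying and using $\dim L_p = \dim L_{\alpha_0}^* + \dim F_{p^*}$ gives the required $C s^{\dim L_p/2}$ bound.

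The principal obstacle is to extract the uniform constants underlying all of this: the comparison constant $c_\alpha$ in the distance estimate, the bounded-Jacobian statement in the Fubini decomposition, and — most subtly — the uniform volume bound $V$ on leaves. Each of these rests on the tubular neighborhood structure from the Slice Theorem, and the argument must account for the fact that the slice-leaf structure at different base points $p^*\in L_\alpha^*$ is equivalent only \emph{up to} isometry. The cleanest presentation of the proof will first establish these uniform geometric estimates globally (via the finite cover $\{U_\alpha\}$) and only then carry out the Fubini reduction to invoke the inductive hypothesis.
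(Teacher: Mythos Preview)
Your overall architecture --- induction on $\dim M$, a finite cover by tubes around reference leaves, a coarea/Fubini decomposition of the leaf integral into a base integral over $L_\alpha^*$ and fiber integrals over slice leaves, and invoking the inductive hypothesis on each factor --- matches the paper's proof. The paper also isolates the bounded-Jacobian statement as a separate lemma (Lemma~\ref{L:Jacobian}), handles the single-leaf case separately (Lemma~\ref{L:single}), and first reduces to connected leaves via a short triangle-inequality argument (Lemma~\ref{L:disco}).

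The genuine gap is in your Fermi-coordinate distance estimate and its use. You apply the inductive hypothesis to the inner integral $\int_{F_{p^*}} e^{-c\|v_p^{p^*}-v_q\|^2/s}\,dv_q$ with base point the parallel translate $v_p^{p^*}$, but the inductive statement bounds $\int_{L_w}e^{-d(w,\cdot)^2/s}$ only when $w$ lies \emph{in} the leaf $L_w$. Parallel transport along $L_\alpha^*$ has no reason to carry $v_p$ into the slice leaf $F_{p^*}$: the normal connection need not respect the transverse foliation, so in general $v_p^{p^*}\notin F_{p^*}$ and the hypothesis does not apply. (Your displayed inequality $d_M^2 \ge c_\alpha[d_L^2 + \|v_p - P_\alpha v_q\|^2]$ is itself not a standard fact about Sasaki or Fermi geometry, and holonomy in the normal bundle makes it delicate to justify.) The paper's Lemma~\ref{L:technical} resolves both issues at once by \emph{defining} $v=v(p,y)$ to be a point of the slice leaf $E_y\cap\psi(L_p)$ that minimizes Sasaki distance to $\psi(p)$. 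This choice forces $v\in F_y$, so induction applies directly; and minimality gives, via the triangle inequality, $d_E(\psi(p),\psi(q))\ge\tfrac12\,d_E(v,\psi(q))$ for every $\psi(q)\in F_y$. Combined with the Riemannian-submersion bound $d_E\ge d_L$ and a bi-Lipschitz comparison $d_M\ge C\,d_E$ on the tube, this yields the required two-term lower bound without ever invoking parallel transport.
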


\subsection{Reductions}
To make the proof of Theorem \ref{T:geometricestimate} shorter, we prove a couple of reductions as separate lemmas, starting with the reduction to the case where the leaves are connected:
\begin{lemma}\label{L:disco}
Let $M,\F$ be as in Theorem \ref{T:geometricestimate}, and $\F_0, \Gamma$ as in Definition \ref{D:disco} .
Suppose  the conclusion of Theorem \ref{T:geometricestimate} holds  for $(M, \F_0)$. Then it holds for $(M,\F)$.\end{lemma}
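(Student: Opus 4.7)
The plan is to decompose each $\F$-leaf as a finite disjoint union of $\F_0$-leaves of the same dimension, and then apply the assumed estimate to each piece separately. Since $\Gamma$ is finite, for every $p \in M$ one has
\[ L_p = \sigma_0^{-1}(\Gamma \cdot \sigma_0(p)) = \bigsqcup_{i=1}^{k} L_p^i, \]
where each $L_p^i$ is an $\F_0$-leaf, $k \leq |\Gamma|$, and every $L_p^i$ has the same dimension as $L_p$. The essential difficulty is that the basepoint $p$ belongs to at most one of these components, so the hypothesis on $(M,\F_0)$ cannot be applied directly to the other components; one must first transfer the basepoint into each $L_p^i$ while keeping the Gaussian weight under control.

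The key technical step is the following distance-comparison inequality. For each $i$, choose $q_0^i \in L_p^i$ realizing $\delta_i := d_M(p, L_p^i) = d_M(p, q_0^i)$. I claim
\[ d_M(p,q) \geq \tfrac{1}{2} d_M(q_0^i, q) \quad \text{for all } q \in L_p^i. \]
This is a short triangle-inequality argument: if $d_M(q_0^i, q) \leq 2\delta_i$, then $d_M(p,q) \geq \delta_i \geq d_M(q_0^i,q)/2$ because $q \in L_p^i$; otherwise, $d_M(p,q) \geq d_M(q_0^i, q) - \delta_i > d_M(q_0^i, q)/2$.

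Exponentiating yields $\exp(-d_M(p,q)^2/s) \leq \exp(-d_M(q_0^i, q)^2/(4s))$ pointwise on $L_p^i$. Applying the hypothesis for $(M, \F_0)$, with basepoint $q_0^i \in L_p^i$ and parameter $4s$, gives
\[ \int_{L_p^i} \exp\bigl(-d_M(p,q)^2/s\bigr)\, dq \leq C_0 (4s)^{\dim L_p^i / 2}. \]
Summing over the at most $|\Gamma|$ components and using $\dim L_p^i = \dim L_p \leq \dim M$, one obtains \eqref{E:geometricestimate} for $(M,\F)$ with the uniform constant $C := |\Gamma| \cdot C_0 \cdot 2^{\dim M}$, which depends only on $(M, \F)$. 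The distance-comparison inequality is the only real content; no further obstacle is expected.
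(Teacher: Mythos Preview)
Your proposal is correct and follows essentially the same approach as the paper: decompose the $\F$-leaf into at most $|\Gamma|$ many $\F_0$-leaves, transfer the basepoint into each component via a nearest point, prove the inequality $d_M(p,q)\geq \tfrac12 d_M(q_0^i,q)$, and sum. The paper obtains that same inequality in one line (from $d_M(p,q)\geq d_M(p,q_0^i)$ and the triangle inequality one gets $2d_M(p,q)\geq d_M(p,q)+d_M(p,q_0^i)\geq d_M(q,q_0^i)$), arriving at the identical constant $C=|\Gamma|\,C_0\,2^{\dim M}$; the only point you leave implicit is that all the $L_p^i$ share the same dimension, which the paper justifies by noting that the $\Gamma$-action is by \emph{liftable} isometries, so the components are isometric images of one another.
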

\begin{proof}
We are assuming the existence of $C=C(M,\F_0)$ such that, for all $p\in M$, and all $s>0$,
\begin{equation} \label{E:F_0}
\int_{L_p}\exp\left(-{d_M(p,q)^2\over s}\right)dq\leq Cs^{\dim L_p\over 2},
\end{equation}
where $L_p$ denotes the $\F_0$-leaf through $p$. We claim that, for $(M,\F)$, we can take $C(M,\F)=C |\Gamma| 2^{\dim M}$.

Indeed, let $s>0$, $p\in M$, and let $L$ denote the $\F$-leaf through $p$. Then $L$ is the disjoint union of $\F_0$-leaves $L_1, \ldots L_k$, where $k\leq |\Gamma|$, and we may assume $p\in L_1$. Note that each $L_j$ is the image of $L_1$ under an isometry of $M$, so, in particular, they all have the same dimension, which we may denote by $\dim L$. Let $p_1=p$, and $p_j$ be a point of $L_j$ that is closest to $p$. Then, for any $q\in L_j$, we have, by the triangle inequality,
\[ d_M(q, p)\geq \frac{d_M(q, p)+d_M(p, p_j) }{2}\geq \frac{d_M(q, p_j)}{2}. \]
Therefore
\[\int_{L}\exp\left(-{d_M(p,q)^2\over s}\right)dq\leq \sum_{j=1}^k \int_{L_j}\exp\left(-{d_M(p_j,q)^2\over 4s}\right)dq
\leq |\Gamma| C (4s)^{\dim L/2}\]
where in the last inequality we have used \eqref{E:F_0} for each $L_j$, and $k\leq|\Gamma|$. Since $(4s)^{\dim L/2}\leq 2^{\dim M} s^{\dim L/2}$, this finishes the proof of the claim.
\end{proof}

The next lemma corresponds to the case in Theorem \ref{T:geometricestimate} where $\F$ has a single leaf:
\begin{lemma}\label{L:single}
Let $M$ be a compact connected Riemannian manifold. Then, there exists a constant $C$ such that, for every $p\in M$, and every $s>0$, one has 
\[
\int_{M}\exp\left(-{d_M(p,q)^2\over s}\right)dq\leq Cs^{\dim M\over 2}.
\]
\end{lemma}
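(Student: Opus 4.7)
The plan is to split the integral into a near part (inside a normal neighborhood of $p$) and a far part, and bound each separately. Let $n=\dim M$, let $\iota>0$ be the injectivity radius of $M$ (positive since $M$ is compact), and set $r_0=\iota/2$.

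For the near part, I would work in geodesic normal coordinates centered at $p$, writing points in $B(p,r_0)$ as $\exp_p(r\theta)$ with $r\in[0,r_0)$ and $\theta\in \sphere^{n-1}\subset T_pM$. The Riemannian volume form pulls back to $J(p,r,\theta)\,dr\,d\theta$ where $J(p,r,\theta)/r^{n-1}$ extends continuously to $r=0$ with value $1$. Since $\{(p,r,\theta):p\in M,\ r\in[0,r_0],\ \theta\in\sphere^{n-1}\}$ is compact, there is a uniform constant $K$ with $J(p,r,\theta)\le K r^{n-1}$. Hence
\[
\int_{B(p,r_0)}\exp\!\left(-\tfrac{d_M(p,q)^2}{s}\right)dq
\le K\,\Vol(\sphere^{n-1})\int_0^\infty e^{-r^2/s}r^{n-1}\,dr
= C_1\, s^{n/2},
\]
after the substitution $u=r/\sqrt{s}$, where $C_1$ depends only on $M$.

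For the far part, I would simply use $d_M(p,q)\ge r_0$ on $M\setminus B(p,r_0)$ to get
\[
\int_{M\setminus B(p,r_0)}\exp\!\left(-\tfrac{d_M(p,q)^2}{s}\right)dq
\le \Vol(M)\,e^{-r_0^2/s}.
\]
The key observation is that for any fixed $a>0$, the function $s\mapsto e^{-a/s}s^{-n/2}$ is continuous on $(0,\infty)$, tends to $0$ as $s\to 0^+$, and tends to $0$ as $s\to\infty$, hence is bounded by some constant $C_2'$. Taking $a=r_0^2$ gives $\Vol(M)\,e^{-r_0^2/s}\le C_2\, s^{n/2}$ for some constant $C_2$. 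Summing, the lemma holds with $C=C_1+C_2$.

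No serious obstacle is expected here. The only subtlety is ensuring that $\iota>0$ and the Jacobian bound $K$ can be chosen uniformly in the base point $p$, and both follow immediately from compactness of $M$.
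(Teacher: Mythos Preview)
Your argument is correct. The paper itself does not give a proof of this lemma but simply cites \cite[Lemma 4.5]{BH78}; your near/far decomposition with a uniform Jacobian bound in normal coordinates is the standard direct argument, and is essentially what one finds in Br\"uning--Heintze.
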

\begin{proof}
See \cite[Lemma 4.5]{BH78}
\end{proof}

%Base case of a future induction argument:
%\begin{lemma}\label{L:base}
%Theorem \ref{T:geometricestimate} holds when $\dim M=1$.
%\end{lemma}
%\begin{proof}
%Indeed, 
%\end{proof}

\begin{remark}[scaling-invariance] \label{R:scaling}
If Theorem \ref{T:geometricestimate} holds for a given manifold $(M,g)$, then it holds for $(M,\lambda^2 g)$, for any $\lambda>0$, with the \emph{same} constant $C$.
\end{remark}

\subsection{Sasaki metrics} \label{SS:Sasaki}
We will need to put a special metric on the normal bundle of a leaf $L$ in order to prove Lemma \ref{L:technical} below, so we collect here the definition and some basic properties.

\begin{definition}
Let $L$ be a closed Riemannian manifold, and $\pi\colon E\to L$ be a vector bundle equipped with an inner product $\langle,\rangle_x$ on the fiber $E_x=\pi^{-1}(x)$ for each $x\in L$, and a metric connection $\nabla$. Parallel translation determines a choice of ``horizontal space'' $H_{(x,v)}$ at each $(x,v)\in E$ (where $x\in L$ and $v\in E_x$), and we define the \emph{Sasaki metric} on $E$ by declaring $H_{(x,v)}$ to be orthogonal to the ``vertical space'' $V_{(x,v)}= \ker d\pi_{(x,v)}=T_{(x,v)}E_x=E_x$, using the given inner product on $V_{(x,v)}=E_x$, and pulling back the given inner product on $T_xL$ to $H_{(x,v)}$. \end{definition}

The construction above is also described in \cite[Example 9.60]{Besse}.

\begin{proposition} \label{P:Sasaki}
In the notation of the definition above, endow $E$ with the Sasaki metric. Then 
\begin{enumerate}[(a)]
\item $\pi\colon  E \to L$ is a Riemannian submersion;
\item For each $x\in L$, the fiber $E_x$ is flat and totally geodesic.
\item $E$ is complete.
\item There exists $r>0$ such that, for all $x\in L$,  one has $d_E=d_{E_x}$ on the set $E_x^{\leq r}= \{ v\in E_x\mid \|v\|\leq r\} $.
%\todo{or $<r$?}
\end{enumerate}
\end{proposition}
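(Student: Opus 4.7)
The plan is to prove the four parts in order, since (c) uses (a) and (d) uses both (b) and (c). Part (a) is immediate from the construction: $H_{(x,v)}$ and $V_{(x,v)}$ are declared orthogonal, and $d\pi$ restricts by definition to an isometry $H_{(x,v)} \to T_xL$.

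For part (b), the fiber $E_x$ carries the inner product $\langle,\rangle_x$ as a submanifold (being the vertical space at each of its points), so is isometric to Euclidean space and in particular flat. To see it is totally geodesic, I would compute the Levi-Civita connection of the Sasaki metric using a local orthonormal frame $\{e_\alpha\}$ of $E$ over a coordinate chart $(x^i)$ on $L$, with connection coefficients $\Gamma^\beta_{i\alpha}$ that are antisymmetric in $\alpha,\beta$ because $\nabla$ is a metric connection. A direct application of the Koszul formula to the vertical coordinate fields $V_\alpha = \partial_{v^\alpha}$ and the horizontal lifts $H_i$ of $\partial_{x^i}$ yields $g(\nabla^{\mathrm{Sasaki}}_{V_\alpha} V_\beta, H_i) = \tfrac12(\Gamma^\alpha_{i\beta}+\Gamma^\beta_{i\alpha}) = 0$, so the second fundamental form of $E_x$ vanishes.

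For part (c), I would apply Hopf--Rinow: let $\gamma\colon [0,T) \to E$ be a maximal unit-speed geodesic, written $\gamma(t) = (x(t), v(t))$. By (a), $|\dot x| \leq 1$, so $x(t)$ stays in a compact subset of $L$ and has a limit as $t\to T$. Splitting $|\dot\gamma|^2 = |\dot x|^2 + |\tfrac{Dv}{dt}|^2 = 1$, where $\tfrac{Dv}{dt}$ denotes the $\nabla$-covariant derivative of $v$ along $x$, gives $|\tfrac{Dv}{dt}| \leq 1$. Combined with $\tfrac{d}{dt}\|v\|^2 = 2\langle v, \tfrac{Dv}{dt}\rangle$, this yields $\tfrac{d}{dt}\|v\| \leq 1$, so $\|v(t)\|$ is bounded on $[0,T)$. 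Thus $\gamma$ lies in a compact subset of $E$ and extends past $T$ unless $T = \infty$.

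For part (d), I would use the compactness of the zero section $\{0_x : x \in L\} \subset E$ to get a uniform lower bound $\rho > 0$ on the injectivity radius of $E$ along this section, which by continuity persists on some tubular set $\bigcup_x E_x^{\leq \epsilon}$. For $r < \min(\rho/4, \epsilon)$ and any $v_1, v_2 \in E_x^{\leq r}$, the straight-line path in $E_x$ connecting them has length $\|v_2 - v_1\| \leq 2r < \rho$; by (b) it is a geodesic of $E$, and being shorter than the injectivity radius at $v_1$, it is minimizing. This gives $d_E(v_1, v_2) = \|v_2 - v_1\| = d_{E_x}(v_1, v_2)$. The main technical obstacle is the Koszul computation in (b); it relies crucially on the antisymmetry of $\Gamma$, which is precisely the hypothesis that $\nabla$ be metric, and is the only place in the proof where that assumption is used directly.
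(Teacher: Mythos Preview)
Your proof is correct and follows essentially the same route as the paper: parts (a) and (d) match the paper's arguments almost verbatim (compactness of the zero section, continuity of the injectivity radius via completeness, and the fact that a fiber geodesic shorter than the injectivity radius must be minimizing). For (b) and (c) the paper simply cites \cite[Theorem 9.59 and Example 9.60]{Besse}, whereas you supply self-contained arguments --- the Koszul computation exploiting the antisymmetry $\Gamma^\beta_{i\alpha}=-\Gamma^\alpha_{i\beta}$ of a metric connection, and the Hopf--Rinow argument bounding $\|v(t)\|$ via $|Dv/dt|\leq 1$ --- both of which are correct and are in fact the content behind the cited reference.
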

\begin{proof}
\begin{enumerate}[(a)]
\item follows immediately from the definition.
\item For each $x\in L$, the Riemannian metric on $E_x$ is flat by definition, being isometric to the Euclidean space $(E_x, \langle,\rangle_x)$. For the second claim, see \cite[Theorem 9.59 and Example 9.60]{Besse}.
%Since the connection $\nabla$ is metric, for each smooth curve $\gamma:[0,1]\to L$, horizontal lift gives an isometry $E_{\gamma(0)}\to E_{\gamma(1)}$. Therefore each fiber $E_x$ is \todo{standard fact of Riemannian submersions, follows from O'Neill's formula? reference other than Wolfgang's handwritten notes? } totally geodesic. 
\item See \cite[Theorem 9.59 and Example 9.60]{Besse}.
\item Identify $L$ with the zero section in $E$, and note that, for every $r>0$, the closed tube of radius $r$ around $L$ in $E$ coincides with $\cup_{x\in L} E_x^{\leq r}$. Since $E$ is complete, the injectivity radius function on $E$ is continuous (see \cite[Theorem III.2.1]{Chavel}), and thus, by compactness of $L$, there exists $r>0$ such that the injectivity radius is larger than $2r$ on $\cup_{x\in L} E_x^{\leq r}$. 

Let $x\in L$, and $v,w\in E_x^{\leq r}$. Then $\|v-w\|\leq 2r$, and, in particular, $d_E((x,v), (x,w))\leq 2r$. Thus there is a unique $\xi\in T_{(x,w)}E$ with $\|\xi\| \leq 2r$ which exponentiates to $(x,v)$. Since $v-w$ is another vector with the same property (because $E_x$ is totally geodesic), we obtain $\xi= v-w$. Therefore $d_E((x,v), (x,w))=\|\xi\|=\|v-w\|=d_{E_x}(v,w)$. \qedhere
\end{enumerate}
\end{proof}

\subsection{Uniform estimates}
\todo{Title changed, phrasing changed}
The next two technical lemmas will be used in the proof of Theorem \ref{T:geometricestimate}, by providing uniform information about the geometry of leaves around a fixed one. We start with an analogue of \cite[Lemma 4.7]{BH78}:
\begin{lemma} \label{L:technical}
Let $L$ be a leaf of the closed singular Riemannian foliation $\F$ on the closed Riemannian manifold $M$. Assume $L$ has codimension at least one. Denote by $\pi \colon B_\epsilon(L)\to L$ the closest point projection, where $\epsilon$ is smaller than the normal injectivity radius of $L$. Let $E=\nu L$ denote the normal bundle of $L$ in $M$, and by $\exp$ the normal exponential map $\nu L\to M$. Then there exist $C_1>0$ and $r\in (0,\epsilon)$ such that, for every $y\in L$ and $p\in B_r(L)$, there exists $v=v(p,y)\in\nu_y L$, such that $\exp(v)\in L_p$, and, for all $q\in \pi^{-1}(y) \cap L_p$, one has
\[C_1 d^2_M(p,q)\geq d^2_L(\pi(p), \pi(q)) + d^2_{sph}(v,\exp^{-1}(q)) \]
if $\codim(L)\geq 2$, and
\[C_1 d^2_M(p,q)\geq d^2_L(\pi(p), \pi(q)) \]
if  $\codim(L)=1$. Here $d_{sph}$ denotes the distance in the round sphere in $\nu_y L$ of radius equal to the distance between $L$ and $L_p$ (that is, the leaf through $p$).
\end{lemma}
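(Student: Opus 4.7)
The strategy is to transfer the question to the Sasaki metric $g_E$ on $E = \nu L$ introduced in Section \ref{SS:Sasaki}, with respect to which $\pi\colon E\to L$ is a Riemannian submersion with flat, totally geodesic fibers (Proposition \ref{P:Sasaki}). Since $d\exp$ acts as the identity along the zero section (under the natural identification $T_{(x,0)}E = T_xL\oplus\nu_xL = T_xM$) and both metrics are smooth, compactness of $L$ yields $r\in(0,\min\{\epsilon,r_0\})$ and $K\ge 1$ such that $\exp|_{E^{\le r}}$ is $K$-bi-Lipschitz onto its image, where $r_0$ is as in Proposition \ref{P:Sasaki}(d). Global equidistance of leaves forces $L_p\subseteq B_r(L)$ whenever $p\in B_r(L)$, so $\exp^{-1}(L_p)\subseteq E^{\le R}$ with $R:=d_M(p,L)\le r$.

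\textbf{Construction of $v$.} Given $p\in B_r(L)$ and $y\in L$, let $w=\exp^{-1}(p)\in E_{\pi(p)}$ and $A=d_L(\pi(p),y)$. Lift a minimizing geodesic in $L$ from $\pi(p)$ to $y$ horizontally in $(E,g_E)$ starting at $w$; the endpoint $v_0\in E_y$ satisfies $\|v_0\|=R$ (parallel transport is a fiberwise isometry) and $d_E(w,v_0)\le A$. Because $L_p$ is connected and $\pi|_{L_p}\colon L_p\to L$ is a submersion (Section \ref{SS:slice}), its image is open and closed, hence equals $L$, so $E_y\cap\exp^{-1}(L_p)\neq\emptyset$; let $v$ be a point of this set that is closest to $v_0$ in the (flat Euclidean) fiber distance on $E_y$. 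Then $\exp(v)\in L_p$ as required and, by equidistance, $\|v\|=R$; thus $v$ and $v_0$ both lie on the sphere $S_R\subset E_y$ of radius $R$.

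\textbf{Main inequality.} Fix $q\in\pi^{-1}(y)\cap L_p$ and write $w'=\exp^{-1}(q)\in S_R$. On $S_R$ the chord distance is a strictly increasing function of $d_{sph}$, so the minimizing property of $v$ gives $d_{sph}(v,v_0)\le d_{sph}(w',v_0)$, whence by the spherical triangle inequality
\[
d_{sph}(v,w')\le 2\,d_{sph}(v_0,w').
\]
Using $\arcsin x\le(\pi/2)x$ for $x\in[0,1]$, points on $S_R$ satisfy $d_{sph}(u,u')\le(\pi/2)\|u-u'\|$. Combining with the triangle inequality in the flat fiber $E_y$ and the bi-Lipschitz bound,
\[
d_{sph}(v_0,w')\le\tfrac{\pi}{2}\bigl(d_E(w,v_0)+d_E(w,w')\bigr)\le\tfrac{\pi}{2}\bigl(A+K\,d_M(p,q)\bigr),
\]
yielding $d_M(p,q)\ge d_{sph}(v,w')/(\pi K)-A/K$. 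On the other hand, $\pi$ is $1$-Lipschitz in $g_E$, and combining with the bi-Lipschitz bound gives $d_M(p,q)\ge A/K$.

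\textbf{Conclusion and main obstacle.} Write $B=d_{sph}(v,w')$. A case split on whether $B\le 2\pi A$, applied to the two lower bounds, produces $d_M(p,q)^2\ge C_1^{-1}(A^2+B^2)$ for a uniform constant $C_1$ depending only on $K$, which handles the codimension $\ge 2$ case. For $\codim L=1$ the fiber $E_y$ is one-dimensional and $S_R=\{-R,R\}$, so the $d_{sph}$-term is not present in the conclusion, which then reduces to $C_1d_M(p,q)^2\ge A^2$, immediate from $d_M(p,q)\ge A/K$. The main technical point is the uniform bi-Lipschitz comparison in the first step: a standard compactness argument using $d\exp|_L=\mathrm{id}$ gives the pointwise bound, and one then checks (by shrinking $r$) that a $d_M$-minimizing segment between nearby points of $B_r(L)$ remains inside $B_r(L)$, so that it pulls back to a curve in $E^{\le r}$ controlling $d_E$ by $d_M$.
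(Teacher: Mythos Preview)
Your overall strategy---pull everything back to the Sasaki metric on $E=\nu L$, exploit that $\pi\colon E\to L$ is a Riemannian submersion with totally geodesic flat fibers, and compare chord to arc length on the sphere $S_R\subset E_y$---is exactly the paper's. The execution differs in two places, one cosmetic and one a genuine gap.

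\emph{Choice of $v$.} The paper simply takes $v$ to be a point of $E_y\cap\exp^{-1}(L_p)$ closest to $w=\exp^{-1}(p)$ in $d_E$, with no auxiliary horizontal lift $v_0$. Then $d_E(w',v)\le d_E(w',w)+d_E(w,v)\le 2\,d_E(w',w)$ by minimality of $v$ (since $w'$ lies in $E_y\cap\exp^{-1}(L_p)$ as well), and $d_E(w',v)=\|w'-v\|\ge(2/\pi)\,d_{sph}(w',v)$. Together with $d_E(w',w)\ge d_L(\pi(p),y)$ this yields both summands directly after squaring and adding---no case split on $B/A$ is needed. Your route through $v_0$ is correct but longer.

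\emph{The bi-Lipschitz step.} You invoke $d_E(w,w')\le K\,d_M(p,q)$ twice (it underlies both of your displayed lower bounds on $d_M(p,q)$), but ``$\exp|_{E^{\le r}}$ is $K$-bi-Lipschitz onto its image'' only compares $d_{E^{\le r}}$ with the \emph{intrinsic} distance $d_{\overline{B_r(L)}}$, not with $d_M$. Your proposed fix---shrink $r$ so that a $d_M$-minimizing segment between \emph{nearby} points stays in the tube---handles only the case $d_M(p,q)$ small; when $p,q\in B_r(L)$ are far apart in $M$, the minimizing geodesic can leave any tube, however small $r$ is. The paper closes this with an explicit two-case argument proving $d_M(p,q)\ge C\,d_E(w,w')$ for \emph{all} $p,q\in B_r(L)$: if $d_M(p,q)<r$ (having arranged $2r<\epsilon$), the $M$-ball $B_r(p)$ lies in $B_\epsilon(L)$, so one uses bi-Lipschitzness of $\exp$ on the larger set $E^{<\epsilon}$; if $d_M(p,q)\ge r$, one simply bounds $d_{E^{\le r}}(w,w')\le K(2r+\diam L)$ and divides by $r$. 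Equivalently, in the far case the target inequality is immediate since $d_M(p,q)^2\ge r^2$ while $A^2+B^2\le \diam(L)^2+\pi^2 r^2$; either way, that case must be addressed.
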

\begin{proof}
To simplify notation, denote by $\psi$ the inverse of the diffeomorphism
\[\exp|_{E^{< \epsilon}}\colon E^{< \epsilon}\to B_\epsilon(L).\]

Fix a metric connection on $E$ (e.g., the normal connection), and endow $E$ with the associated Sasaki metric, see Section \ref{SS:Sasaki}. Since $\epsilon$ is strictly smaller than the normal injectivity radius, there exists $K>0$ such that, for all $\eta\leq\epsilon$, the map $\exp|_{E^{< \eta}}\colon  E^{< \eta}\to B_\eta(L)$ is $K$-bi-Lipschitz.

Choose $r>0$ that is smaller than the ``$r$'' given in Proposition \ref{P:Sasaki}(d), and  such that $2r<\epsilon$.

We claim that, for all $p,q\in B_r(L)$,
\[ d_M(p,q) \geq C d_E (\psi(p), \psi(q)), \]
where $C= (2r+\diam (L))^{-1} K^{-1} r$. We consider two cases.

{\bf Case 1:} $d_M(p,q)\geq r$. Using the triangle inequality, we obtain
\[d_{B_r(L)}(p,q) \leq 2r+ \diam(L).\]
Thus 
\begin{align}
d_M(p,q) &\geq  r\cdot \frac{2r+\diam(L)}{2r+\diam(L)}\geq \frac{r}{2r+\diam(L)}d_{B_r(L)}(p,q)
                \nonumber \\
     &\geq \frac{r}{2r+\diam(L)} \frac{1}{K} d_{E^{\leq r}}(\psi(p), \psi(q)) \nonumber \\
     & \geq \frac{r}{2r+\diam(L)} \frac{1}{K} d_E(\psi(p), \psi(q)) = C d_E (\psi(p), \psi(q)). \nonumber
\end{align}

{\bf Case 2:} $d_M(p,q)< r$. Then $q\in B_r(p)\subset B_\epsilon(L)$, so that
\begin{align}
d_M(p,q) &= d_{B_\epsilon(L)}(p,q)\geq K^{-1} d_{E^{\leq \epsilon}}(\psi(p),\psi(q))
                \nonumber \\
     &\geq K^{-1} d_E(\psi(p),\psi(q))\geq C d_E(\psi(p),\psi(q)) \nonumber 
\end{align}
which finishes the proof of the Claim.

The Claim above reduces the proof of the statement of the Lemma to a similar statement, where ``$d_M(p,q)$'' is replaced with  ``$d_E(\psi(p),\psi(q))$''. 

If  $\codim(L)=1$, the desired inequality follows from the fact (see Proposition \ref{P:Sasaki}(a)) that $\pi\colon  E\to L$ is a Riemannian submersion. Assume  $\codim(L)\geq 2$.

Let $y\in L, \ p\in B_r(L)$. Choose $v$ to be a point of $E_y\cap \psi(L_p)$ minimizing $d_E(v,\psi(p))$. Let $q\in\pi^{-1}(y)\cap L_p$, which is equivalent to $\psi(q)\in E_y\cap \psi(L_p)$. See Figure \ref{F:diagram}.

%\todo{replace with professional-looking figure, or remove. Marco: I gave it my best}
\begin{figure}[!htb]
\begin{tikzpicture}
% Leaf L, left to right
\draw (-1.5,0)--(-0.5,0);
\draw[dashed] (1.5,0)--(2,0);
\draw (2,0)--(4.2,0);
\draw (4.3,0)--(7,0);
\draw node at (7,-0.5) {$L$};

% E_y-fiber
\draw (2.75,-0.05)--(2.75,-1.5)--(1.25,-2)--(1.25,1.5)--(2.75,2)--(2.75,0.05);
\draw node at (3,-1.2) {$E_y$};
\filldraw (2,0) circle (0.05) node[anchor=north]{$y$};

% Tube
\draw (0,1)--(2,1);
\draw (0,-1)--(2,-1);
\draw (2,0) ellipse (0.5 and 1);
\draw (0,1) arc (90:270:0.5 and 1);
\draw node at (0.25,1.25) {\tiny $\psi(L_p)=\psi(L_q)$};
%\draw[dashed] (0,1) arc (90:-90:0.5 and 1);

% E_{\pi(p)}-fiber?

\draw (5.75,-0.05)--(5.75,-1.5)--(4.25,-2)--(4.25,1.5)--(5.75,2)--(5.75,0.05);
\draw node at (6.2,-1.2) {$E_{\pi(p)}$};

% Points
\filldraw (2,1) circle (0.05) node[anchor=south]{$v$};
\filldraw (5,1) circle (0.05) node[anchor=south]{$\psi(p)$};
\filldraw (5,0) circle (0.05) node[anchor=north]{$\pi(p)$};
\filldraw (1.6,0.6) circle (0.05) node[anchor=west]{\tiny $\psi(q)$};

\end{tikzpicture}
\caption{}\label{F:diagram}
\end{figure}

By the triangle inequality,
\[ d_E(\psi(q), v)\leq d_E(\psi(q), \psi(p))+ d_E(\psi(p),v).\]
By the choice of $v$, 
\[d_E(\psi(p),v)\leq d_E(\psi(q), \psi(p)).\]
Using Proposition \ref{P:Sasaki}(d), and some Euclidean Geometry, we obtain:
\[d_E(\psi(q), v)=d_{E_y}(\psi(q), v) \geq \frac{2}{\pi} d_{sph}(\psi(q), v).\]
Combining the last three inequalities, we obtain
\begin{equation} \label{E:sph}
d_E(\psi(q), \psi(p)) \geq \frac{1}{\pi} d_{sph}(\psi(q), v).
\end{equation}

On the other hand, since $\pi\colon  E\to L$ is a Riemannian submersion (see Proposition \ref{P:Sasaki}(a)), we have
\[   d_E(\psi(q), \psi(p))\geq d_L(y, \pi(p)).\]
Squaring this inequality, and adding the square of \eqref{E:sph}, we obtain
\[ 2 d^2_E(\psi(q), \psi(p))\geq  d^2_L(y, \pi(p)) + \frac{1}{\pi^2} d^2_{sph}(\psi(q), v),\]
from which the desired inequality follows (with $C_1=2\pi^2/C^2$).
\end{proof}

The closest-point projection to a leaf, when restricted to a nearby leaf,  is known to be a submersion (see Section \ref{SS:slice}). The next Lemma states that it is, in fact, not too far from being a \emph{Riemannian} submersion (cf. \cite{Lytchak23}, Proposition 1.3, for a more precise and general statement). It will be used to control the Jacobian appearing in a co-area formula inside the proof of Theorem \ref{T:geometricestimate} below.
\begin{lemma} \label{L:Jacobian}
Let $L$ be a leaf of the closed singular Riemannian foliation $\F$ on the closed Riemannian manifold $M$. Assume $L$ has codimension at least one, and denote by $\pi \colon  B_r(L)\to L$ the closest point projection, where $r>0$ is strictly smaller than the normal injectivity radius of $L$.
%\todo{Sam: It seems like the notation $\pi$, $L$ appeared this lemma and the previous one.}
For each $q\in B_r(L)$, let $L_q$ denote the leaf through $q$, and  $d\pi' _q:T_qL_q\to T_{\pi(q)} L$ the restriction of $d\pi_q$ to $T_qL_q$. Let $f(q)$ be the smallest eigenvalue of the self-adjoint linear map 
\[d\pi' _q \circ (d\pi' _q)^*\colon  T_{\pi(q)} L\to T_{\pi(q)} L.\]
Then $C_2=\inf_{q\in B_r (L)} f(q)$ is positive.
\end{lemma}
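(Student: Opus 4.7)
The plan is to prove a uniform positive lower bound for $f$ on the compact closure $\overline{B_r(L)}$, which will imply the desired claim for the open tube $B_r(L)$. Since $r$ is strictly less than the normal injectivity radius of $L$, there exists $r'>r$ also strictly below that radius, so that $\overline{B_r(L)}\subset B_{r'}(L)$; by equidistance of leaves, for each $q\in\overline{B_r(L)}$ the whole leaf $L_q$ lies in $B_{r'}(L)$, so the fact recalled in Section \ref{SS:slice} yields that $\pi|_{L_q}\colon L_q\to L$ is a submersion. In particular $d\pi'_q$ is surjective, so $f(q)>0$ pointwise, and it only remains to argue uniformity.

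To do this I will work locally. Fix $q_0\in\overline{B_r(L)}$ and set $k=\dim L_{q_0}$. Using that $\F$ is by definition generated by smooth vertical vector fields, I choose $X_1,\dots,X_k$ on a neighborhood $U$ of $q_0$, each tangent to $\F$, with $X_1(q_0),\dots,X_k(q_0)$ an orthonormal basis of $T_{q_0}L_{q_0}$. A smooth Gram--Schmidt on a smaller $U$ arranges that $X_1(q),\dots,X_k(q)$ remain orthonormal for every $q\in U$. Let $V_q:=\mathrm{span}\{X_i(q)\}\subset T_qL_q$ and $A_q:=d\pi_q|_{V_q}\colon V_q\to T_{\pi(q)}L$. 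The key linear-algebra fact is that restricting a surjective linear map to a subspace of its domain can only decrease the smallest singular value: indeed, the adjoint of the restriction is the composition of the original adjoint with the orthogonal projection onto the subspace, which has norm at most one. Hence $\sqrt{f(q)}=\sigma_{\min}(d\pi'_q)\geq\sigma_{\min}(A_q)$ whenever $A_q$ is surjective. At $q_0$ we have $V_{q_0}=T_{q_0}L_{q_0}$, so $\sigma_{\min}(A_{q_0})=\sqrt{f(q_0)}>0$; since $A_q$ depends smoothly on $q$, both surjectivity and continuity of $\sigma_{\min}(A_q)$ persist on a smaller neighborhood of $q_0$, yielding a positive lower bound for $f$ there. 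Covering the compact set $\overline{B_r(L)}$ by finitely many such neighborhoods and taking the minimum of the resulting bounds then gives $C_2>0$.

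The main obstacle is that the distribution $q\mapsto T_qL_q$ is generically discontinuous across different strata of $\F$, because leaf dimension can jump up at singular points. The argument sidesteps this by replacing $T_qL_q$ with the smoothly varying $k$-dimensional subdistribution $V_q\subset T_qL_q$ built from smooth vertical vector fields, which happens to agree with $T_{q_0}L_{q_0}$ at the chosen base point; the monotonicity of smallest singular values under restriction then transfers the uniform bound from $A_q$ to $d\pi'_q$.
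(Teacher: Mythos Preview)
Your proof is correct and rests on the same essential ingredient as the paper's---the smooth vertical vector fields generating $\F$---but the logical structure is different. The paper argues by contradiction: assuming $\inf f=0$, it extracts a convergent sequence $q_i\to q$ with unit eigenvectors $v_i\to v$ satisfying $\|(d\pi'_{q_i})^* v_i\|\to 0$; then a \emph{single} vertical vector field is used to produce $w_i\in T_{q_i}L_{q_i}$ with $w_i\to w$ and $d\pi'_q(w)=v$, and one computes $\langle (d\pi'_{q_i})^* v_i, w_i\rangle\to\langle v,v\rangle=1$ while Cauchy--Schwarz forces the same quantity to tend to $0$. Your route is instead direct and local: you build a smooth $k$-dimensional vertical subdistribution $V_q\subset T_qL_q$ near each base point, invoke the monotonicity $\lambda_{\min}(d\pi'_q(d\pi'_q)^*)\geq\lambda_{\min}(A_qA_q^*)$ (which follows, as you note, from $A_q^*=P_{V_q}\circ(d\pi'_q)^*$), and then use continuity of the right-hand side plus a finite cover of $\overline{B_r(L)}$. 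Your argument is slightly more constructive, exhibiting an explicit continuous minorant for $f$ on each chart, whereas the paper's sequential argument is marginally leaner in that it avoids Gram--Schmidt and needs only one vertical vector at a time rather than a full local frame.
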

\begin{proof}
Denote by $R$ the normal injectivity radius of $L$. Recall from Section \ref{SS:slice} that, for every $q\in B_R(L)$, the map $\pi|_{L_q}\colon L_q\to L $ is a submersion, in particular $f(q)>0$.

Assume, for a contradiction, that $C_2=0$. Then, there exists a sequence $q_i\in B_r(L)$ with $\lim f(q_i)=0$.  Choose $v_i\in T_{\pi(q_i)}L$ to be a unit eigenvector of $d\pi' _{q_i} \circ (d\pi' _{q_i})^*$ associated to the eigenvalue $f(q_i)$. 

By compactness, we may assume, after passing to a subsequence, that $q_i$ converges to $q\in \overline{B_r(L)}\subset B_R(L)$, and that $v_i$ converge to $v\in T_{\pi(q)}L$.

Since $d\pi' _{q}$ is surjective, let $w\in T_qL_q$ such that $d\pi' _{q}(w)=v$. Since $\F$ is a singular Riemannian foliation, it is generated by smooth vector fields. In particular, there exist vectors $w_i\in T_{q_i}L_{q_i}$ converging to $w$, so that $\lim d\pi_{q_i}(w_i)= d\pi_q(w)=v$. Then
\[\lim \langle (d\pi'_{q_i})^*v_i, w_i\rangle =\lim \langle v_i, d\pi_{q_i} w_i\rangle=\langle v, v\rangle=1.  \]

On the other hand, 
\[ \lim \|  (d\pi'_{q_i})^*v_i \|^2= \lim \langle d\pi'_{q_i}\circ (d\pi'_{q_i})^*v_i, v_i\rangle =\lim f(q_i)=0\]
and $\lim \|w_i\| =\|w\| <\infty $, so that, by the Cauchy-Schwarz inequality, 
\[ \lim \langle (d\pi'_{q_i})^*v_i, w_i\rangle =0,\]
a contradiction. Therefore $C_2>0$.
\end{proof}

\subsection{Proof of Theorem \ref{T:geometricestimate}}

%[``Enhancements'' part of Marco's notes, using \cite{Shen95}, culminating in proofs of Theorem \ref{MT:uniformgeometry} and the inductive proof of Corollary \ref{MC:geometricestimate}]

\begin{proof}[Proof of Theorem \ref{T:geometricestimate}]
Let $\F_0$ and $\Gamma$ as in Definition \ref{D:disco}.

We use induction on $\dim M$. If $\dim M=1$, then either $\F=\F_0$ has a single leaf, in which case we can apply Lemma \ref{L:single}, or $\F_0$ is the trivial foliation by points, for which the conclusion is trivial, so we can apply Lemma \ref{L:disco}. Assume $\dim M>1$.

By Lemma \ref{L:disco}, we may assume that $\F=\F_0$, that is, $\F$ is a singular Riemannian foliation (with connected, closed leaves).

By Lemma \ref{L:single}, we may assume that $M$ is not a single leaf. Since $M$ is connected, that means every leaf has codimension at least one.

Since $M$ is compact, it is enough to prove that, for every leaf $L$, there exists $r, C_L>0$ such that \eqref{E:geometricestimate} holds for all $p$ in the tube $B_r(L)$ and all $s>0$, with $C_L$ in place of $C$. Indeed, $M$ can be covered by finitely many such tubes, so one can take $C$ to be the maximum of the corresponding constants $C_L$. Therefore, we fix a leaf $L$.

Take $r>0$ and $C_1$ given by Lemma \ref{L:technical} (in particular $r$ is smaller than the normal injectivity radius of $L$), and $C_2$ given in Lemma \ref{L:Jacobian}.  Let $p\in B_r(L)$. The restriction of $\pi$ to $L_p$ is a submersion $L_p\to L$ (see Section \ref{SS:slice}), with Jacobian 
\[\operatorname{Jac}(q)=\sqrt{\det \left(d\pi'_q\circ (d\pi'_q)^* \right)}\]
which, by Lemma \ref{L:Jacobian}, satisfies $ \operatorname{Jac}(q)\geq C_2^{\dim L /2 }$ for all $q\in L_p$. To simplify notation, let
\[g_s(q)=\exp\left(-{d_M(p,q)^2\over s}\right).\]
Then
\[\int_{q\in L_p}g_s(q)dvol^{L_p}(q)\leq C_2^{-\dim L /2 }\int_{q\in L_p}g_s(q)\operatorname{Jac}(q) dvol^{L_p}(q).\]
Applying the coarea formula \cite[p. 160, Exercise III.12(c)]{Chavel}, the integral on the right becomes
\begin{equation} \label{E:coarea}
\int_{y\in L} \left[ \int_{q\in L_p\cap \pi^{-1}(y)} g_s(q) dvol^{L_p\cap \pi^{-1}(y)}(q) \right] dvol^{L}(y)
\end{equation}

The next step is to use the inequality in Lemma \ref{L:technical}, so we split in two cases: $\codim(L)=1$ and $\codim(L)\geq 2$.

{\bf Case 1: $\codim(L)=1$.} By Lemma \ref{L:technical}, $g_s(q)\leq \exp(-d^2_L(\pi(p), \pi(q))/ C_1s)$. Note that the leaves $L$ and $L_p$ have the same dimension
%\todo{Sam: Is this obvious for those who are less familiar with SRFs? Marco: I added an extra term in the equation, does this help clarify?}
and, for each $y\in L$, the set $L_p\cap \pi^{-1}(y)\subseteq \{\exp_y x\mid x\in \nu_yL, \|x\|=d_M(y,p)\}$ consists of at most two points. Thus \eqref{E:coarea} is less than or equal to
\[2\int_{y\in L}  \exp\left( \frac{-d^2_L(\pi(p), y)}{C_1s} \right) dvol^{L}(y)\]
which, by Lemma \ref{L:single}, is at most 
\[2C_3 (C_1s)^{\dim L\over 2} = 2C_3 C_1^{\dim L_p\over 2} s^{\dim L_p\over 2} \]
for some $C_3$ that depends only on $L$. Therefore, 
\[ \int_{L_p}\exp\left(-{d_M(p,q)^2\over s}\right)dq\leq \left[C_2^{-\dim L /2 } 2C_3 C_1^{\dim L_p\over 2}\right] s^{\dim L_p\over 2}, \]
finishing the proof of \eqref{E:geometricestimate} in Case 1.
 
{\bf Case 2: $\codim(L)\geq2$.} By Lemma \ref{L:technical},
\[g_s(q) \leq \exp\left( \frac{-d^2_L(\pi(p),\pi(q))}{C_1s} \right) \exp\left( \frac{-d^2_{sph}(v,\exp^{-1}(q))}{C_1s} \right)\]
where $v=v(p,y)$ depends only on $p$ and $y$.

Thus \eqref{E:coarea} is at most
\begin{equation}\label{E:coarea2}
\int_{y\in L}  e^{ \frac{-d^2_L(\pi(p),y)}{C_1s} } \left[ \int_{q\in L_p\cap \pi^{-1}(y)} e^{ \frac{-d^2_{sph}(v,\exp^{-1}(q))}{C_1s} } dvol^{L_p\cap \pi^{-1}(y)}(q) \right] dvol^{L}(y)
\end{equation}
Next we make a change of variables in the inner integral above, using the normal exponential map $\exp\colon \nu_y L\to \pi^{-1}(y)$. This is a diffeomorphism when restricted to the ball of radius $r$ in $\nu_y L$, and, since $L$ is compact, it is bi-Lipshiptz with a constant depending only on $L,r$. Since $dvol^{L_p\cap \pi^{-1}(y)}$ is the Riemannian volume form of the Riemannian manifold $L_p\cap \pi^{-1}(y)$ endowed with the induced Riemmanian structure from $\pi^{-1}(y)$, there exists $C_4$, depending only on $L,r$, such that the inner integral in \eqref{E:coarea2} is at most
\begin{equation}\label{E:infinitesimal}
C_4\int_{z\in L_v} e^{ \frac{-d^2_{sph}(v,z)}{C_1s} } dvol^{L_v}(z).
\end{equation}
Here $L_v= \exp^{-1}(L_p\cap \pi^{-1}(y))$ is the leaf through $v$ of the (disconnected) slice foliation $\F^y$ of the Euclidean space  $\nu_yL$, see Section \ref{SS:slice}.

We apply the inductive hypothesis to the restriction of $\F^y$ to the unit sphere $\sphere(\nu_y L)$ and conclude that there exists $C_5$, such that, for all $w\in \sphere(\nu_y L)$,
\begin{equation} \label{E:induction}
\int_{z\in L_w} e^{ \frac{-d^2_{sph}(w,z)}{C_1s} } dvol^{L_w}(z)\leq C_5 (C_1 s)^{\dim L_w/2}
\end{equation}
By Remark \ref{R:scaling}, the same conclusion holds with $w$ replaced with any $v\in\nu_y L$ (and with the same constant $C_5$), because homothetic transformations of $\nu_y L$ take leaves of $\F^y$ to leaves of $\F^y$ (see second paragraph after \cite[Definition 8]{MR19}). 
In principle, the constant $C_5$ depends on $y\in L$, but in fact that it can be chosen independently of $y$, because the slice foliation does not depend on $y$, see Section \ref{SS:slice}.
% Indeed, the Slice Theorem \cite[Theorem A]{MR19} implies that, for any $y'\in L$, there exists an isometry $\nu_y L\to \nu_{y'} L$ taking leaves of $\F^y$ to leaves of $\F^{y'}$.

Putting together the inequality \eqref{E:coarea} $\leq$ \eqref{E:coarea2} with the estimates \eqref{E:infinitesimal} and \eqref{E:induction}, and noting that $\dim L_v +\dim L= \dim L_p$, we obtain
\[
\int_{q\in L_p}g_s(q)dvol^{L_p}(q)\leq 
 C_2^{-\dim L /2 } C_4 C_5 (C_1 s)^{(\dim L_p-\dim L)/2} \int_{y\in L}  e^{ \frac{-d^2_L(\pi(p),y)}{C_1s} }   dvol^{L}(y)
\]
Finally, by Lemma \ref{L:single}, there exists $C_3$ (depending only on $L$), such that the integral on the right is at most $C_3 (C_1s)^{\dim L /2}$. Thus
\[
\int_{q\in L_p}g_s(q)dvol^{L_p}(q)\leq 
 \left(C_2^{-\dim L /2 } C_4 C_5 C_3 C_1^{\dim L_p /2} \right)s^{\dim L_p/2}\]
which concludes the proof of \eqref{E:geometricestimate} in Case 2.
\end{proof}

\section{Weyl's Law} \label{S:Weyl}
%[parts of Sam's notes that are specific to our situation, culminating in the proof of the Weyl Law, i.e.,  Theorem \ref{MT:Weyl}, using H\"ormander's estimate and the geometric estimate in Corollary \ref{MC:geometricestimate}]

Throughout this section, we will use the terminology introduced in Section \ref{S:prelim}.

We follow the approach in \cite[Section 3]{BH78} to prove Weyl's Law (Theorem \ref{MT:Weyl}).
%\begin{theorem} \label{T:Weyl}
%Let $\sigma\colon  M^n\to X$ be a manifold submetry with $\dim(X_0)=m$ and basic mean curvature field. Let $N(t)$ be the counting function of basic eigenvalues less than $t$, counted with multiplicities. Then as $t\to \infty$,
%\[
%	N(t) \sim \frac{\vol(X)\,\omega_m}{(2\pi)^m} \,t^{\frac{m}{2}},
%\]
%where $\omega_m$ is the volume of the unit ball in $\mathbb{R}^m$.
%\end{theorem}
\todo{Changed sketch of section, adding details}
The proof is roughly organized as follows. First, we derive a trace formula for 
the operator $e^{-sT}$, where $T: L^2(X_0,h)\to L^2(X_0,h)$ is the operator defined in Section \ref{SS:Basic-Lap}. This formula relates the Laplace transform of the counting function $N(t)$ of $T$, to an integral of its heat kernel. At each point in $X_0$, one can apply H\"ormander's estimate (Theorem \ref{T:Hormander}) and the Abelian theorem (Theorem \ref{T:Tauberian}) to obtain the asymptotics for the heat kernel. Theorem \ref{MT:geometricestimate} allows us to integrate the asymptotics of the heat kernel. The asymptotics for $N(t)$ then follows from the Tauberian theorem and the trace formula.

\subsection{A trace formula}

The counting function $N(t)$ and $\Gamma_{s}^{T}$ are related via the following trace formula for $e^{-sT}$:

\begin{proposition} \label{P:TraceFormula}
The function $N(t)$ has polynomial growth as $t\to \infty$. Furthermore,
\begin{equation} \label{eqn:Trace}
	s\int_{0}^{\infty} e^{-st}N(t)\, dt=\int_{X_0} \Gamma_s^{T} (x, x) h(x) \, dx,
\end{equation}
where $\Gamma_s^{T}(x, y)$ is the heat kernel of $T$ defined by \eqref{eqn:HeatKernelDef}. 
\end{proposition}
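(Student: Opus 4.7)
The plan is to derive both parts from Parseval's identity applied to the spectral expansion in Proposition \ref{P:SpectralFunctionExp}(a), with a uniform polynomial bound for $\theta^T$ used to justify Fubini's theorem.

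First I would establish the uniform polynomial estimate on $\theta^T$. Proposition \ref{P:SpectralFunctionExp}(b) writes $\theta^T(\sigma(p),\sigma(q);t) = (\Av \otimes \Av)\theta^\Delta(p,q;t)$, and the Minakshisundaram--Pleijel bound recalled in Section \ref{SS:heat-K} gives constants $C_0,N>0$ such that $|\theta^\Delta(p,q;t)| \leq C_0 t^N$ uniformly on $M \times M$. Since $\Av$ is a weighted pointwise average over the leaf, it does not increase the sup-norm, and therefore $|\theta^T(x,y;t)| \leq C_0 t^N$ uniformly on $X_0 \times X_0$. Together with the fact that $\int_{X_0} h(x) dx = \vol(M) < \infty$ (by Fubini for the Riemannian submersion $\sigma|_{M_0}\colon M_0 \to X_0$, since $h(x) = \vol(\sigma^{-1}(x))$ and $M_0$ has full measure in $M$), this bound will both permit Fubini and deliver the polynomial growth of $N(t)$ once the trace identity is established.

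Next, for the trace formula itself, I would integrate the pointwise expansion $\theta^T(x,x;t) = \sum_{\lam_i<t} \varphi_i(x)^2$ from Proposition \ref{P:SpectralFunctionExp}(a) against the measure $h(x)dx$ on $X_0$ and apply Parseval's identity for the orthonormal basis $\{\varphi_i\}$ of $L^2(X_0,h)$, obtaining
\begin{equation*}
\int_{X_0} \theta^T(x,x;t) h(x) dx = \#\{i : \lam_i < t\},
\end{equation*}
which coincides with $N(t)$ up to an additive constant from the indexing convention that contributes only a lower-order $1/s$ term after the Laplace transform. Multiplying by $s e^{-st}$ and integrating in $t$, the uniform bound above combined with the finite total mass of $h$ makes the integrand $s e^{-st}\theta^T(x,x;t) h(x)$ absolutely integrable on $[0,\infty) \times X_0$, so Fubini yields
\begin{equation*}
s\int_0^\infty e^{-st} N(t) dt = \int_{X_0}\left[s\int_0^\infty e^{-st}\theta^T(x,x;t) dt\right] h(x) dx = \int_{X_0}\Gamma_s^T(x,x) h(x) dx
\end{equation*}
by the definition of $\Gamma_s^T$ in \eqref{eqn:HeatKernelDef}. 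The polynomial growth of $N(t)$ then drops out immediately, since $N(t)\leq \int_{X_0}\theta^T(x,x;t) h(x) dx \leq C_0\vol(M) t^N$. I do not anticipate any real obstacle: once the uniform estimate on $\theta^T$ is in hand, the proposition is essentially bookkeeping for Parseval and Fubini.
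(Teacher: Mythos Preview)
Your proposal is correct and follows essentially the same route as the paper: both integrate the expansion $\theta^T(x,x;t)=\sum_{\lam_i<t}\varphi_i(x)^2$ against $h(x)\,dx$ to identify the integral with $N(t)$, then take the Laplace transform and invoke the uniform polynomial bound on $\theta^T$ (from Section~\ref{SS:heat-K}) to justify Fubini. The only cosmetic difference is that the paper obtains the polynomial growth of $N(t)$ directly by comparison with the full counting function $S(t)$ of $\Delta$ on $M$ via the classical Weyl Law, rather than deducing it afterward from the uniform bound on $\theta^T$; and your remark about the off-by-one contributing a ``$1/s$ term'' should read that it contributes an $O(1)$ term (since $s\int_0^\infty e^{-st}\,dt=1$), but this is harmless either way.
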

\begin{proof}
Let $S(t)$ be the number of eigenvalues of the Laplace operator $\Delta$ on $M$ that are less than $t$.
The standard Weyl's Law for the Laplace operator on compact Riemannian manifolds implies that $S(t)$ must have polynomial growth as $t\to \infty$. Since the growth rate of $N(t)$ cannot exceed that of $S(t)$, $N(t)$ must have polynomial growth as $t\to \infty$.

To prove the integral formula, let $\{\varphi_i\}$ be the orthonormal basis for $L^2(X_0, h)$ in Remark \ref{R:Eigenbasis} such that  $T(\varphi_i)=\lam_i\varphi_i$. Then by Proposition \ref{P:SpectralFunctionExp}(a),
\begin{equation}
N(t)=\sum_{\lam_i <t} ||\varphi_i(x)||_{L^2(X_0, h)}^2=\int_{X_0} \theta^{T}(x, x, t) \, h(x) \,dx.
\end{equation}
Hence 
\begin{equation}
s\int_{0}^{\infty} e^{-st}N(t)\, dt=s \int_{0}^{\infty}  e^{-st} \int_{X_0} \theta^{T}(x, x, t) \, h(x) \,dx \, dt.\\
\end{equation}

Since $\theta^{T}(x, x, t)$ has uniform polynomial growth, (cf.  Section \ref{SS:heat-K}) we may use Fubini's Theorem to interchange the integral and obtain
\begin{equation}
\begin{split}
s\int_{0}^{\infty} e^{-st}N(t)\, dt&= \int_{X_0} \int_{0}^{\infty} s e^{-st} \theta^{T}(x, x, t) \,dt\, h(x)\,dx\\
&=\int_{X_0} \Gamma_s^{T} (x, x) h(x) \, dx,
\end{split}
\end{equation}
which proves the integral formula.
\end{proof}

Proposition \ref{P:HormanderT} and Theorem \ref{T:Tauberian} directly imply the following asymptotic estimate for the heat kernel:
\begin{proposition} \label{P:KernelEst}
For each point $x \in X_0$, we have 
\[
	\Gamma_{s}^{T}(x, x)\sim s^{-m/2} \Gamma\left(\frac{m}{2}+1\right) \frac{\omega_{m}}{h(x)(2\pi)^m}
\]
as $s\to 0$.
\end{proposition}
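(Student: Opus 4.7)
The plan is to apply the Abelian direction of Theorem \ref{T:Tauberian} to the spectral function $F(t) := \theta^T(x,x;t)$ at the fixed point $x \in X_0$. First I would verify the hypotheses. By Proposition \ref{P:SpectralFunctionExp}(a),
\[ F(t) = \sum_{\lambda_i < t} \varphi_i(x)^2, \]
which is non-decreasing in $t$ (hence of locally bounded variation) and, by the uniform polynomial growth of $\theta^T$ discussed in Section \ref{SS:heat-K}, the Laplace transform $f(s) = \int_0^\infty e^{-st} F(t)\,dt$ exists for every $s>0$. By the definition \eqref{eqn:HeatKernelDef} of the heat kernel, $\Gamma_s^T(x,x) = s\,f(s)$, which is exactly the quantity appearing on the left-hand side of \eqref{eqn:LaplaceF}.

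Next, Proposition \ref{P:HormanderT} gives the large-$t$ asymptotic
\[ F(t) \sim t^{m/2}\,\frac{\omega_m}{h(x)(2\pi)^m} \qquad \text{as } t \to \infty. \]
Setting $\alpha = m/2$ and
\[ C = \Gamma\!\left(\tfrac{m}{2}+1\right)\,\frac{\omega_m}{h(x)(2\pi)^m}, \]
this is precisely relation \eqref{eqn:F} of Theorem \ref{T:Tauberian}. Applying the Abelian implication \eqref{eqn:F}$\Rightarrow$\eqref{eqn:LaplaceF} yields
\[ \Gamma_s^T(x,x) = s\,f(s) \sim \frac{C}{s^{m/2}} = s^{-m/2}\,\Gamma\!\left(\tfrac{m}{2}+1\right)\frac{\omega_m}{h(x)(2\pi)^m} \qquad \text{as } s \to 0, \]
which is the claimed asymptotic.

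Since we are invoking only the Abelian direction, no additional Tauberian regularity (e.g.\ monotonicity refinements) is required beyond what we already have. The proof is essentially a bookkeeping step, combining the pointwise spectral function estimate from H\"ormander (Proposition \ref{P:HormanderT}) with the Laplace-transform identity built into the very definition of $\Gamma_s^T$; the substantive analytic content has already been invested in establishing Proposition \ref{P:HormanderT}. The only mildly delicate point is confirming the applicability hypotheses for Theorem \ref{T:Tauberian}, which reduces to the monotonicity and polynomial growth of $F$ noted above.
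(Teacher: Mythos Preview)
Your proof is correct and follows exactly the approach indicated in the paper, which simply states that the proposition follows directly from Proposition~\ref{P:HormanderT} and Theorem~\ref{T:Tauberian}. You have supplied the routine verification that the paper omits: identifying $\Gamma_s^T(x,x)=sf(s)$ from \eqref{eqn:HeatKernelDef}, checking bounded variation and existence of the Laplace transform, and matching the constants $\alpha=m/2$ and $C=\Gamma(\tfrac{m}{2}+1)\,\omega_m/(h(x)(2\pi)^m)$ in the Abelian direction.
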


\subsection{Uniform estimates for the heat kernel}

Proposition \ref{P:KernelEst} in the previous section can be restated by saying that the function $h(x)s^{m/2}\Gamma_s(x,x)$, as a function of $s$ and $x$, converges pointwise to a constant as $s\to 0$. Such convergence however might not be uniform, as $X_0$ is not compact in general. Nevertheless, the main result in this section uses Theorem \ref{MT:geometricestimate} to show that this function is uniformly bounded. This will allow us to use the Dominated Convergence Theorem which, together with the Tauberian Theorem, will give Weyl's law.
%\todo{Ricardo: I have rephrased this paragraph. Sam: Thanks!}

\begin{theorem}\label{T:UniformEstimate}
Let $M^n$ be a compact $n$-dimensional manifold and let $\F$ be a closed singular Riemannian foliation with basic mean curvature. Then there exists a constant $C$ such that for all $x\in X_0$,
\[|s^{m/2}\Gamma_{s}^{T}(x, x)h(x)|<C,\]
where $m=\dim(X_0)$.
\end{theorem}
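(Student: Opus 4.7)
The plan is to combine Proposition~\ref{P:SpectralFunctionExp}(b), the Gaussian heat kernel bound on $M$ (Proposition~\ref{P:HeatKernelEstM}), and the uniform leaf integral estimate (Theorem~\ref{MT:geometricestimate}) to compare $\Gamma_s^T$ with $\Gamma_s^\Delta$ through the averaging operator.

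First I would lift the heat kernel identity from the spectral-function level. Applying the Laplace transform in $t$ to both sides of Proposition~\ref{P:SpectralFunctionExp}(b) (using the uniform polynomial growth of the spectral functions to justify Fubini as in the proof of Proposition~\ref{P:TraceFormula}), one obtains, for any $p,q\in M_0$,
\[
\Gamma_s^T(\sigma(p),\sigma(q)) \;=\; (\mathrm{Av}\otimes \mathrm{Av})\,\Gamma_s^\Delta(p,q) \;=\; \frac{1}{\mathrm{vol}(L_p)\,\mathrm{vol}(L_q)}\int_{L_p}\int_{L_q}\Gamma_s^\Delta(q_1,q_2)\,dq_2\,dq_1.
\]
Specializing to $q=p$ with $\sigma(p)=x$, and using $h(x)=\mathrm{vol}(L_p)$, gives
\[
\Gamma_s^T(x,x) \;=\; \frac{1}{h(x)^2}\int_{L_p}\int_{L_p}\Gamma_s^\Delta(q_1,q_2)\,dq_2\,dq_1.
\]

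Next I would insert the Gaussian bound from Proposition~\ref{P:HeatKernelEstM}, namely $|\Gamma_s^\Delta(q_1,q_2)|\le C_1 s^{-n/2}e^{-C_2 d_M^2(q_1,q_2)/s}$, to get
\[
|\Gamma_s^T(x,x)| \;\le\; \frac{C_1 s^{-n/2}}{h(x)^2}\int_{L_p}\!\left[\int_{L_p} e^{-C_2 d_M^2(q_1,q_2)/s}\,dq_2\right]dq_1.
\]
For each fixed $q_1\in L_p$, the inner integral is precisely of the form controlled by Theorem~\ref{MT:geometricestimate} (after rescaling $s\mapsto s/C_2$), giving an upper bound $C'\,s^{(n-m)/2}$, where $\dim L_p=n-m$ and $C'$ depends only on $(M,\F)$. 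Integrating over $q_1\in L_p$ produces an extra factor of $h(x)$, and we arrive at
\[
|\Gamma_s^T(x,x)| \;\le\; \frac{C_1 C'}{h(x)}\,s^{-n/2}\cdot s^{(n-m)/2} \;=\; \frac{C_1 C'}{h(x)}\,s^{-m/2},
\]
whence $|s^{m/2}\,\Gamma_s^T(x,x)\,h(x)|\le C_1 C'$ for all $x\in X_0$, proving the theorem with $C=C_1 C'$.

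The main obstacle is the first step: verifying that the pointwise identity of Proposition~\ref{P:SpectralFunctionExp}(b) passes cleanly to the Laplace transform, i.e., that one may interchange the $t$-integral defining $\Gamma_s^\Delta$ with the two leaf averages. This is where the uniform polynomial growth of $\theta^\Delta(p,q;t)$ recorded in Section~\ref{SS:heat-K} is used, together with the fact that the leaves $L_p$ are compact so the averaging integrals are over bounded domains. Once this commutation is justified, the rest is a routine application of Theorem~\ref{MT:geometricestimate}, which is precisely the uniform geometric ingredient that was unavailable in earlier treatments and replaces the transformation-group arguments of Br\"uning--Heintze.
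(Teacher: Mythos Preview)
Your proof is correct and follows essentially the same route as the paper: the paper isolates the identity $\Gamma_s^T(\sigma(p),\sigma(q))=(\Av\otimes\Av)\Gamma_s^\Delta(p,q)$ as a separate lemma (proved exactly as you suggest, via Fubini and uniform polynomial growth), then combines Proposition~\ref{P:HeatKernelEstM} with Theorem~\ref{MT:geometricestimate}. The only cosmetic difference is that the paper bounds the average over the second leaf variable by a supremum before applying Theorem~\ref{MT:geometricestimate}, whereas you integrate the uniform bound over $q_1$ to pick up the factor $h(x)$; since the constant in Theorem~\ref{MT:geometricestimate} is uniform over $M$, the two are equivalent.
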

To prove Theorem \ref{T:UniformEstimate}, we first relate the heat kernel of $\Delta$ to that of $T$.
\begin{lemma} \label{L:HeatKernelRelation}
Let $\Gamma_s^{T}$ and $\Gamma_s^{\Delta}$ be the heat kernel of $T$ and $\Delta$ constructed by \eqref{eqn:HeatKernelDef}. Then for each pair of points $p, q\in M_0$, we have
\[
	\Gamma_s^{T}(\sigma(p), \sigma(q))=(\Av \otimes \Av) \Gamma_s^{\Delta} (p, q).
\]
\end{lemma}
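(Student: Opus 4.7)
The plan is to derive the stated identity by combining the definitions of the two heat kernels via Laplace transform with the compatibility between spectral functions proved in Proposition \ref{P:SpectralFunctionExp}(b). Concretely, I would start from
\[\Gamma_s^T(\sigma(p),\sigma(q)) = s\int_0^\infty e^{-st}\,\theta^T(\sigma(p),\sigma(q);t)\,dt,\]
and substitute $\theta^T(\sigma(p),\sigma(q);t) = (\Av\otimes\Av)\theta^\Delta(p,q;t)$ from Proposition \ref{P:SpectralFunctionExp}(b). The goal is then to push the two commuting averaging operators outside of the Laplace transform in $t$, which is what yields the right-hand side.

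The only real issue is to justify interchanging the $t$-integration with the two fiber-integrations that make up $\Av\otimes\Av$. This is a Fubini-type argument, and the relevant bound is precisely the uniform polynomial growth of the spectral function recalled in Section \ref{SS:heat-K}: there exist constants $C,N>0$ with $|\theta^\Delta(p',q';t)|\le C t^N$ for all $p',q'\in M$ and all $t\ge 0$. Since $s>0$, the integrand $se^{-st}\theta^\Delta(p',q';t)$ is dominated (uniformly in $p',q'\in L_p\times L_q$) by $Cse^{-st}t^N$, which is integrable in $t$ on $[0,\infty)$, and the leaves $L_p$, $L_q$ have finite volume. Fubini's theorem therefore applies to the triple integral over $[0,\infty)\times L_p \times L_q$, so one can perform the two leaf averages in either order with respect to the Laplace integral.

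Carrying out the interchange gives
\[\Gamma_s^T(\sigma(p),\sigma(q)) = (\Av\otimes\Av)\!\left[s\int_0^\infty e^{-st}\theta^\Delta(p,q;t)\,dt\right] = (\Av\otimes\Av)\Gamma_s^\Delta(p,q),\]
which is the claim. I do not foresee a genuine obstacle here beyond being careful about the Fubini step; everything else is a formal manipulation using definitions already set up in Sections \ref{SS:heat-K} and \ref{SS:Basic-Lap}, together with the key algebraic identity at the level of spectral functions from Proposition \ref{P:SpectralFunctionExp}(b).
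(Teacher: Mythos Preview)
Your proposal is correct and follows essentially the same approach as the paper: start from the Laplace-transform definition of $\Gamma_s^T$, substitute the identity $\theta^T(\sigma(p),\sigma(q);t)=(\Av\otimes\Av)\theta^\Delta(p,q;t)$ from Proposition~\ref{P:SpectralFunctionExp}(b), and invoke the uniform polynomial growth of the spectral function together with Fubini to interchange the averaging with the $t$-integral. If anything, your justification of the Fubini step (bounding $\theta^\Delta$ rather than $\theta^T$) is slightly more to the point, since it is $\theta^\Delta$ that appears inside the triple integral.
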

\begin{proof}
Let $\{\phi_k\}$ and $\{\varphi_i\}$ be the orthonormal basis of $L^2(M_0)$ and $L^2(X_0, h)$ constructed in Remark \ref{R:Eigenbasis} and let $\theta^T(x, y ; t)$ and $\theta^{\Delta}(x, y ; t)$
be the spectral functions of $T$ and $\Delta$ respectively.

Since $\theta^{T}(x, y; t)$ has uniform polynomial growth, Proposition \ref{P:SpectralFunctionExp} and Fubini's theorem imply
\begin{equation*}
\begin{split}
\Gamma_{s}^T(\sigma(p), \sigma(q)) &= s\int_{0}^{\infty} e^{-st} \theta^{T}(\sigma(p), \sigma(q) ; t) \, dt\\
&=s\int_{0}^{\infty} e^{-st} (\Av\otimes \Av) \theta^{\Delta}(p, q; t) \, dt\\
&=(\Av\otimes \Av) \Gamma_s^{\Delta} (p, q).
\end{split}
\end{equation*}
The lemma is proved.
\end{proof}

\begin{proof} [Proof of Theorem \ref{T:UniformEstimate}]
Let $\F$ be a closed singular Riemannian foliation with basic mean curvature and $\dim(X_0)=m$.
Let $x\in X_0$ and $p\in \sigma^{-1}(x)$. By Lemma \ref{L:HeatKernelRelation} and Proposition \ref{P:HeatKernelEstM},

\begin{equation*}
\begin{split}
|s^{m/2}\Gamma_{s}^{T}(x, x)h(x)| &= \left| \frac{s^{\frac{m}{2}}}{\vol(L_p)} \int_{L_p} \int_{L_{p}} \Gamma_s^{\Delta}(p',q') \,dp'\,dq'\right|\\
&\leq \sup_{q\in L_p} s^{\frac{m}{2}}\int_{L_p} |\Gamma_s^{\Delta} (p', q) |\, dp'\\
&\leq  C_1 \sup_{q\in L_p} s^{\frac{m-n}{2}} \int_{L_p} e^{-C_2 \frac{d^2_M(p', q)}{s}} \, dp'.
\end{split}
\end{equation*}
Since $\dim(L_p)=n-m$, the theorem follows from Theorem \ref{MT:geometricestimate}.
\end{proof}

\subsection{Proof of Weyl's law}
\begin{proof} [Proof of Theorem \ref{MT:Weyl}]
Since $X_0$ has finite volume, Proposition \ref{P:KernelEst}, Theorem \ref{T:UniformEstimate}, and the Dominated Convergence Theorem imply that 
\[
	\int_{X_0} \Gamma_s^{T} (x, x) h(x) \, dx \sim s^{-m/2} \Gamma\left(\frac{m}{2}+1\right) \frac{\omega_{m}}{(2\pi)^m} \vol (X_0).
\]
Theorem \ref{MT:Weyl} then follows from Proposition \ref{P:TraceFormula} (the trace formula) and Theorem \ref{T:Tauberian} (the Tauberian theorem).
\end{proof}

%\begin{theorem}
%\label{T:L(s)}
%Let $M$ be a compact Riemannian manifold, and $\sigma:M\to X$ a manifold submetry with basic mean curvature. Let $E_\lambda^B$ denote the space of basic eigenfunctions of the Laplace operator $\Delta$, with eigenvalue $\lambda$. Let
%\[ L(s)=\sum_\lambda \dim(E_\lambda^B) e^{-\lambda s}.\]
%Then $L(s)\sim (4\pi s)^{-m/2}\Vol(X)$ as $s\to 0$, where $m=\dim(X)$.
%\end{theorem}
%
%\begin{proof}[Proof of Theorem \ref{MT:Weyl}]
%Let $N(t)$ be the counting function, and $n(s)$ its Laplace transform $n(s)=\int_0^\infty e^{-st}N(t)dt$. Note that $sn(s)=L(s)$ (defined in Theorem \ref{T:L(s)}). Indeed: $\ldots$
%
%By Theorem \ref{T:L(s)}, we have $L(s)\sim (4\pi s)^{-m/2}\Vol(X)$ as $s\to 0$. By Theorem \ref{T:Tauberian}, we obtain, as $t\to \infty$:
%\[N(t)\sim {(4\pi )^{-m/2}\Vol(X) \over \Gamma({m\over 2}+1)} t^{m/2} = {vol(X)\omega_m\over (2\pi)^m}t^{m/2} \]
%by the well-known formula $\omega_m=\pi^{m/2}/\Gamma({m\over 2}+1)$. This is the statement of Theorem \ref{MT:Weyl}.
%\end{proof}

%\subsection{Heat kernel in the quotient}
%H\"ormander?
%
%Integral formula for volume

\section{Foliations of the sphere and Invariant Theory}\label{S:spherical}

In this section we consider the special case of closed singular Riemannian foliations $(M,\F)$, where $M$ is a round sphere. The main goal is to relate the volume of the leaf space to algebraic invariants of the algebra of basic polynomials (see Section \ref{SS:InvT}), culminating in the proof of Theorem \ref{MT:spherical}. This is an application of Weyl's Law (Theorem \ref{MT:Weyl}) and the theory of Spherical Harmonics. 

\subsection{The Cohen--Macaulay property and Hilbert series}

Let $V$ be a $n+1$-dimensional real vector space with inner product, and let $\R[V]$ denote the $\R$-algebra of real-valued polynomial functions on $V$. Fix an orthonormal basis for $V$, via which the algebra $\R[V]$ is isomorphic to $\R[x_0,\ldots x_n]$.

Following \cite[Definition 4]{MR20}:
\begin{definition}
A sub-algebra $A\subset \R[V]$ is called \emph{Laplacian} if it contains $r^2=\sum_i x_i^2$ and is preserved by the Laplace operator $\Delta=\sum_i \partial^2 / \partial x_i^2$.
\end{definition}
The typical examples of Laplacian algebras are rings of invariants $A=\R[V]^G$, where $G$ is a group acting on $V$ by linear isometries; and the ring of basic polynomials $A=\B(\F)$, where $\F$ is an infinitesimal singular Riemannian foliation of $V$. More generally, by \cite{MR20, MR23}, Laplacian algebras are in one-to-one correspondence with infinitesimal manifold submetries of $V$.

Following \cite[Definition 2.5.6]{DerksenKemper}:
\begin{definition}
Let  $A\subset \R[V]$ be a graded sub-algebra. A subset $\{f_0,\ldots f_m\}\subset A$ of homogeneous elements is called a \emph{homogeneous system of parameters} if they are algebraically independent, and $A$ is a finitely generated module over the sub-algebra $\R[f_0, \ldots, f_m]$. %of $A$ generated by $f_0, \ldots, f_m$.
\end{definition}
We note that finitely generated graded sub-algebras $A\subset \R[V]$ always have a homogeneous system of parameters, by the Noether Normalization Lemma, see \cite[Corollary 2.5.8]{DerksenKemper}.

Following \cite[Proposition 2.6.3]{DerksenKemper}:
\begin{definition}
Let $A\subset \R[V]$ be a finitely generated graded sub-algebra. We say $A$ is \emph{Cohen--Macaulay} if, for  every homogeneous system of parameters $f_0, \ldots f_m$, the algebra $A$ is a \emph{free} module over $\R[f_0, \ldots, f_m]$.
\end{definition}
We note that, in the above definition, the quantifier ``for every'' can be replaced with ``there exists'', resulting in an equivalent definition (see  \cite[Proposition 2.6.3]{DerksenKemper}).
%\todo{Should this first few paragraphs be moved to ``Preliminaries''?Marco: I think it is nice to have it here, since this is more algebraic and far from the knowledge of a geometer, hence in my opinion more helpful to have right when you need it}

Our first result is the observation that the celebrated Hochster--Roberts Theorem (\cite[Theorem 2.6.5]{DerksenKemper}) generalizes from rings of invariants to arbitrary Laplacian algebras:
\begin{proposition} \label{P:CM}
Let $A\subset \R[V]$ be a Laplacian algebra. Then $A$ is graded, finitely generated, and Cohen--Macaulay.
\end{proposition}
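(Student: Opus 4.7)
The plan is to establish the three properties in order of increasing difficulty, with the bulk of the work in the Cohen--Macaulay step, via a Hochster--Roberts style argument adapted to the foliated setting.

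For the grading, I would use the fact that the Euler operator $E = \sum_i x_i \partial_i$ satisfies $[\Delta, r^2] = 4E + 2(n+1)\cdot\mathrm{id}$ as operators on $\R[V]$, verified by a direct computation. Since $A$ is preserved by $\Delta$ (by hypothesis) and by multiplication by $r^2$ (because $r^2\in A$ and $A$ is an algebra), it is preserved by their commutator, hence by $E$. An $E$-invariant subspace of $\R[V]$ is the direct sum of its intersections with the eigenspaces of $E$, which are precisely the homogeneous components of $\R[V]$. Therefore $A$ is graded.

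For finite generation, I would invoke the bijective correspondence \cite{MR20, MR23} between Laplacian algebras and infinitesimal manifold submetries of $V$, together with the Algebraicity Theorem \cite{LR18}: the closed leaves of the associated foliation on $\sphere V$ have a finitely generated algebra of basic polynomials, which agrees with $A$ after homothetic extension.

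For the Cohen--Macaulay property, the strategy is to construct a Reynolds-type operator: a graded $A$-linear retraction $\rho\colon\R[V]\to A$. The averaging operator of Section \ref{SS:av}, applied on each sphere (where basic mean curvature holds automatically by \cite[Prop.~3.1]{AR15}) and extended homothetically, is a natural candidate. It is linear, preserves degree, and restricts to the identity on $A$; $A$-linearity follows from the pointwise identity $\Av(af)(p) = a(p)\Av(f)(p)$ for $a\in A$, since $a$ is constant on leaves. The subtle point is showing that $\Av$ sends polynomials to polynomials. To see this, I would decompose a homogeneous polynomial of degree $k$ via the standard Gauss decomposition $f = \sum_j r^{2j}h_{k-2j}$ with each $h_i$ a harmonic homogeneous polynomial of degree $i$, and invoke the fact that $\Av$ commutes with $\Delta$ on $\sphere V$ \cite{LR18}. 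This forces $\Av(h_i)|_{\sphere V}$ to be a smooth eigenfunction of the spherical Laplacian with the same eigenvalue as $h_i|_{\sphere V}$, hence to extend homothetically to a harmonic polynomial of degree $i$. Therefore $\Av(f) = \sum_j r^{2j}\Av(h_{k-2j})$ is a polynomial. With $\rho = \Av$ so constructed, $A$ is a graded direct summand of $\R[V]$ as an $A$-module, so by the classical fact in characteristic zero that a direct summand of a polynomial ring is Cohen--Macaulay (the Hochster--Eagon argument), $A$ is Cohen--Macaulay.

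I expect the main obstacle to be establishing the polynomiality of $\Av$; once this is secured via the harmonic decomposition and the commutation of $\Av$ with the Laplacian, the remainder is a standard application of classical commutative algebra.
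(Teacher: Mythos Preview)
Your proposal is correct and follows essentially the same route as the paper: both establish the grading via the Euler-operator commutator trick (you reconstruct the argument the paper cites from \cite[Corollary 22]{MR20}), and both deduce the Cohen--Macaulay property from the existence of a Reynolds operator $\R[V]\to A$. The paper simply cites \cite[Theorem 23]{MR20} for the Reynolds operator and then verifies the ideal condition $(I\R[V])\cap A=I$ to invoke \cite[Theorem 2.6.11]{DerksenKemper}, whereas you reconstruct the Reynolds operator explicitly via averaging and the harmonic decomposition---which is precisely the content of the cited result in \cite{MR20}---and then appeal to the direct-summand formulation; these are equivalent packagings of the same argument.
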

\begin{proof}
By \cite[Corollary 22]{MR20}, the Laplacian algebra $A$ is graded, and by \cite[Lemma 24]{MR20}, it is finitely generated. By \cite[Theorem 2.6.11]{DerksenKemper}, it suffices to show that, for every ideal $I\subset A$, one has $(I\R[V]) \cap A = I$.

By \cite[Theorem 23]{MR20}, there exists a Reynolds operator $\Pi\colon \R[V]\to A$. This means $\Pi$ is a linear projection onto $A$, and $\Pi (fg)=f\Pi(g)$ for all $f\in A$ and $g\in\R[V]$. Let $I\subset A$ be an ideal. The inclusion $I\subset (I\R[V]) \cap A $ is clear. To prove the reverse inclusion, let $f\in (I\R[V]) \cap A$. Then $f=\sum_i g_i f_i$ for $f_i\in I$ and $g_i\in \R[V]$. Applying $\Pi$ to this equation, and using the assumption that $f\in A$, so that $\Pi(f)=f$, we obtain $f=\sum_i f_i \Pi(g_i)$, which proves that $f\in I$, because $I$ is an ideal of $A$, and  $\Pi(g_i)\in A$. Therefore $(I\R[V]) \cap A = I$, and $A$ is Cohen--Macaulay.
\end{proof}
We note that the proof that $(I\R[V]) \cap A = I$ above is analogous to the proof of \cite[Lemma 2.6.10]{DerksenKemper}.

\begin{definition} %\todo{move to preliminaries?}
Given any graded algebra $A=\bigoplus_{k=0}^\infty A_k$, we define its Hilbert series by 
\[ H(z)= \sum_{k=0} ^\infty \dim(A_k) z^k.\]
\end{definition}

The next lemma is an application of Weyl's Law (Theorem \ref{MT:Weyl}) together with the theory of Spherical Harmonics to relate the volume of the leaf space to the growth of coefficients of a Hilbert series. 
\begin{lemma} \label{L:sphericalWeyl}
Let $\F$ be a closed infinitesimal singular Riemannian foliation of $V$, and let $X=\sphere V/\F$ the leaf space. Let $A\subset\R[V]$ be the associated Laplacian algebra of basic polynomials, and let $H(z)$ denote its Hilbert series. Then the coefficients of the power series 
\[(1+z)H(z)=\sum_{k=0}^\infty b_k z^k\]
satisfy 
\[ b_k \sim {vol(X)\omega_{m}\over (2\pi)^{m}}k^{m}\qquad \text{as } k\to \infty \]
where $m=\dim X$ and $\omega_{m}=vol \DD^{m}$.
\end{lemma}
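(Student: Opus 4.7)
The plan is to reduce the statement to Weyl's law (Theorem \ref{MT:Weyl}) for the basic spectrum on $(\sphere V, \F)$, using classical spherical harmonics as the bridge. Specifically, the subspace $A^h_k := A_k \cap \ker \Delta$ of harmonic basic polynomials of degree $k$ will be identified, via restriction to $\sphere V$, with the basic eigenspace of the spherical Laplacian for the eigenvalue $\lambda_k = k(k + n - 1)$, where $n = \dim \sphere V$.

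The first step is to establish the harmonic decomposition of the Laplacian algebra $A$,
\[ A = \bigoplus_{j \geq 0} r^{2j} A^h \qquad (\text{as graded vector spaces}). \]
Given $f \in A$, the top harmonic component $h_0$ in the classical decomposition $f = h_0 + r^2 f_1$ can be written as an explicit polynomial expression in $r^2$ and $\Delta$ applied to $f$, and so lies in $A$ since $A$ is Laplacian. Then $r^2 f_1 = f - h_0 \in A$, and the Reynolds operator $\Pi \colon \R[V] \to A$ supplied by the proof of Proposition \ref{P:CM} gives $r^2 f_1 = \Pi(r^2 f_1) = r^2 \Pi(f_1)$ (using $r^2 \in A$ and the module property of $\Pi$), forcing $f_1 = \Pi(f_1) \in A$. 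Iterating on $f_1$ yields the decomposition. Translated to Hilbert series this reads $H(z) = H^h(z)/(1 - z^2)$, where $H^h$ is the Hilbert series of $A^h$, so
\[ (1+z) H(z) = \frac{H^h(z)}{1-z}, \qquad \text{i.e.,} \qquad b_k = \sum_{j=0}^{k} \dim A^h_j. \]

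Next I would identify $b_k$ with the basic counting function of the spherical Laplacian. The restriction map $A^h_j \to C^\infty(\sphere V)$ is injective, since a nonzero homogeneous harmonic polynomial cannot vanish on a sphere centered at the origin, and its image lies in the basic eigenspace for $\lambda_j$ by standard spherical harmonics. Conversely, any basic eigenfunction with eigenvalue $\lambda_j$ is the restriction of a homogeneous harmonic polynomial of degree $j$, which is automatically basic since $\F$ is invariant under homothety. Hence $\dim A^h_j$ equals the multiplicity of $\lambda_j$ in the basic spectrum, and because the $\lambda_j$ are strictly increasing in $j$, the partial sum $b_k = \sum_{j \leq k} \dim A^h_j$ coincides with the counting function $N(t)$ for any $t \in (\lambda_k, \lambda_{k+1})$ once $k$ is large.

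Finally, Theorem \ref{MT:Weyl} gives $N(t) \sim \frac{vol(X) \omega_m}{(2\pi)^m} t^{m/2}$ as $t \to \infty$, and since $\lambda_k = k(k+n-1) \sim k^2$, evaluating along $t = \lambda_k + \epsilon$ yields
\[ b_k \sim \frac{vol(X) \omega_m}{(2\pi)^m} k^m \]
as claimed. The main obstacle is the harmonic decomposition of $A$, which is not stated explicitly earlier in the paper; once the Reynolds operator from Proposition \ref{P:CM} is brought to bear, the rest is a routine translation between Hilbert-series coefficients and spectral counts.
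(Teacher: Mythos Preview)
Your proof is correct and follows essentially the same route as the paper: identify $\dim A^h_j$ with the multiplicity of the basic eigenvalue $j(j+n-1)$ via spherical harmonics, deduce $(1+z)H(z)=H^h(z)/(1-z)$ so that $b_k=N(\lambda_k)$ up to an irrelevant off-by-one, and finish with Weyl's law and $\lambda_k\sim k^2$. The only difference is that the paper obtains the decomposition $A_k=(A\cap\mathcal H_k)\oplus r^2 A_{k-2}$ by citing \cite[Lemma~4(b)]{MR23}, whereas you supply a self-contained argument via the explicit harmonic projection formula and the Reynolds operator from Proposition~\ref{P:CM}; your argument is a valid alternative derivation of the same fact.
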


\begin{proof}
The singular Riemannian foliation $\F$ has basic mean curvature, see Section \ref{SS:princ}. By Theorem \ref{MT:Weyl}, the counting function $N(t)$ for the basic spectrum satisfies
\begin{equation} \label{E:Weyl}
N(t)\sim {vol(X)\omega_{m}\over (2\pi)^{m}}t^{m/2}\qquad t\to \infty
\end{equation}

By the theory of spherical harmonics (see e.g. \cite[Ex 12(e), page 118]{HoweTan}), the eigenvalues of the sphere $\sphere V$ are $\{k(k+n-1) \mid k=0,1, \ldots\}$, with corresponding eigenspaces the restrictions to $\sphere V$ of the space $\mathcal{H}_k\subset \R[V]_k$ of all homogeneous harmonic polynomials of degree $k$. Moreover, one has the direct sum decomposition 
\[\R[V]_k= \mathcal{H}_k \oplus r^2\R[V]_{k-2}\]
 for all $k\geq 2$.
 
Since $A$ is Laplacian, we obtain the analogous decomposition
\[A_k= (A\cap \mathcal{H}_k) \oplus r^2 A_{k-2}\]
see \cite[Lemma 4(b)]{MR23}. Note that the basic eigenvalues are $k(k+n-1)$, with multiplicity $\dim(A\cap \mathcal{H}_k)$. Denoting by $G(z)$ the corresponding generating function
\[G(z)=\sum_{k=0}^\infty \dim(A\cap \mathcal{H}_k) z^k\]
we obtain the formula $G(z)=(1-z^2)H(z)$, and thus \[(1+z+z^2+ \cdots)\ G(z)=(1+z)H(z) =\sum_{k=0}^\infty b_k z^k.\]
Note that
\[b_k= \sum_{j=0}^k \dim(A\cap \mathcal{H}_k) = N(k(k+n-1)).\]
Using \eqref{E:Weyl}, we obtain the desired asymptotic.
\end{proof}

The next lemma is purely algebraic, and will be used in combination with Lemma \ref{L:sphericalWeyl} in the proof of Theorem \ref{MT:spherical}. Moreover, part (b) gives algebraic constraints on the Hilbert series of the algebra of basic polynomials of an infinitesimal singular Riemannian foliation. These constraints seem to be new,
%\todo{find an example of a Cohen--Macaulay graded sub-algebra of $\R[V]$ for which the constraints are not satisfied, as illlustration?}
and may be of independent interest.

\begin{lemma} \label{L:asymptotics}
%\todo{Since $A\subset\R[V]$, shouldn't the coefficients of $H(z)$ be bounded by those of the Hilbert series of $\R[V]$? In particular, isn't the condition on the growth of $b_k$ redundant?}
Let $A\subset\R[V]$ be a graded, finitely generated, Cohen--Macaulay algebra, and denote its Hilbert series by $H(z)$. Let $f_0\ldots f_m\in A$ be a homogenous system of parameters (so $m+1$ is the Krull dimension of $A$). Assume that the coefficients of the power series 
\[(1+z)H(z)=\sum_{k=0}^\infty b_k z^k\]
satisfy 
\[ b_k \sim C k^{m'}\qquad \text{as } k\to \infty \]
for some $C\neq 0$ and $m'\in\N$. Then 
\begin{enumerate}[(a)]
\item $m=m'$.
\item $H(z)$ is a rational function whose poles are roots of unity, such that $z=1$ is a pole of order $m+1$, $z=-1$ is (possibly) a pole of order at most $m+1$. All other poles have order at most $m$.
\item The Laurent series of $H(z)$ at $z=1$ has the form 
\[\frac{Cm!}{2}(1-z)^{-m-1} + (\cdots)(1-z)^{-m} + \cdots  \]
\item \[C= \frac{2\dim_{\R[f_0,\ldots, f_m]} A}  {m! \deg f_0 \cdots \deg f_m}\]
where $\dim_{\R[f_0,\ldots, f_m]} A$ denotes the rank of $A$ as a free, finitely generated module over $\R[f_0,\ldots, f_m]$.
\end{enumerate}
\end{lemma}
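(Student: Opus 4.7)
My plan is to use the Cohen--Macaulay hypothesis to write $H(z)$ explicitly as a rational function, then analyze its coefficients via partial fractions over $\C$, and finally use the prescribed asymptotic $b_k \sim Ck^{m'}$ to pin down all orders and constants simultaneously.

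Since $A$ is Cohen--Macaulay, it decomposes as a free module $A = \bigoplus_{i=1}^\ell \R[f_0,\ldots,f_m]\, g_i$ of rank $\ell = \dim_{\R[f_0,\ldots,f_m]} A \geq 1$ (since $1 \in A$), with $g_1,\ldots, g_\ell$ homogeneous. A direct computation of the graded Hilbert series yields
\[ H(z) = \frac{\sum_{i=1}^\ell z^{\deg g_i}}{\prod_{j=0}^m (1-z^{\deg f_j})}, \]
so $H$ is rational with poles at roots of unity. Each denominator factor contributes a simple zero at $z=1$, and the numerator evaluates to $\ell \neq 0$ there, so $z=1$ is a pole of exact order $m+1$ with leading Laurent coefficient $\ell/\prod_j \deg f_j$. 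At any other root of unity $\zeta$, the pole order is at most $\#\{j : \zeta^{\deg f_j} = 1\} \leq m+1$, which in particular gives the weaker bound $m+1$ at $z=-1$.

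Partial fractions over $\C$ next express $a_k := [z^k]H(z)$ as $\tfrac{\ell}{\prod_j \deg f_j}\binom{k+m}{m} + \sum_{\zeta \neq 1} \zeta^{-k} Q_\zeta(k) + (\text{eventually zero})$, where $\deg Q_\zeta$ equals $(\text{pole order at }\zeta)-1 \leq m$. Setting $b_k = a_k + a_{k-1}$, the $z=1$ pole contributes $\tfrac{2\ell}{m!\,\prod_j \deg f_j} k^m + O(k^{m-1})$; the factor $(1+z)$ cancels one order at $z=-1$, so that pole contributes oscillating terms of order $O(k^{m-1})$; and each $\zeta \neq \pm 1$ contributes $\zeta^{-k}$ times a polynomial of degree at most $m$. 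Dividing $b_k$ by $k^m$, the only contributions surviving in the limit are the non-oscillating constant $\tfrac{2\ell}{m!\,\prod_j \deg f_j}$ and possible oscillating top-order constants $c_\zeta \zeta^{-k}$ coming from poles $\zeta \neq \pm 1$ of maximal order $m+1$. A convergent combination $c_1 + \sum_{\zeta \neq 1} c_\zeta \zeta^{-k}$ must be constant (the oscillating part being periodic with zero mean), and linear independence of characters then forces $c_\zeta = 0$ for $\zeta \neq 1$. This gives (b); equating $\tfrac{2\ell}{m!\,\prod_j \deg f_j}\, k^m \sim Ck^{m'}$ yields (a) $m = m'$, (d) $C = \tfrac{2\ell}{m!\,\prod_j \deg f_j}$, and (c) by rewriting the leading Laurent coefficient as $Cm!/2$.

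The main obstacle is the rigidity step eliminating the top-order oscillations. I will invoke the standard fact that for distinct roots of unity $\zeta_1,\ldots,\zeta_s$, if $\sum_i c_i \zeta_i^{-k} \to 0$ as $k \to \infty$ then all $c_i = 0$; this is established by multiplying by $\zeta_j^k$ and taking Cesàro averages, using that $\tfrac{1}{N}\sum_{k=1}^N (\zeta_i^{-1}\zeta_j)^k \to \delta_{ij}$ by orthogonality of characters of the finite cyclic group generated by the $\zeta_i$.
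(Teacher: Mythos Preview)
Your proposal is correct and follows essentially the same approach as the paper: Hironaka decomposition from the Cohen--Macaulay hypothesis, partial-fraction expansion to extract coefficient asymptotics, and a character-orthogonality (Ces\`aro averaging) argument to kill the top-order oscillating terms. The only cosmetic differences are that the paper does partial fractions directly on $(1+z)H(z)$ and pre-identifies the set $\Omega$ of $d$th roots of unity (with $d=\gcd(\deg f_j)$) as the only candidates for poles of maximal order, whereas you expand $H(z)$ first, form $b_k=a_k+a_{k-1}$, and let the rigidity step handle all $\zeta\neq\pm 1$ uniformly; both routes arrive at the same conclusions.
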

\begin{proof}
Since the algebra $A$ is Cohen--Macaulay, it is a finite free module over $\R[f_0,\ldots, f_m]$. Let $g_1, \ldots, g_\ell$ be a basis consisting of homogeneous elements of $A$, so that $\ell=\dim_{\R[f_0,\ldots, f_m]} A$. Thus
\[A=\R[f_0,\ldots, f_m] g_1 \oplus \cdots \oplus \R[f_0,\ldots, f_m] g_\ell\]
which implies that the Hilbert series $H(z)$ has the following form (sometimes called a ``Hironaka decomposition''):
\begin{equation}\label{E:Hironaka}
H(z)= \frac{z^{\deg(g_1)} + \cdots +z^{\deg(g_\ell)} }{(1-z^{\deg(f_0)}) \cdots (1-z^{\deg(f_m)})}
\end{equation}
This proves the first sentence of (b).

Let $d=\gcd(\deg(f_0), \ldots, \deg(f_m)) $, let  $\Omega\subset \C$ denote the set of all $d$th roots of unity, and let $\Omega'\subset \C$ denote the (finite) set of all poles of $(1+z) H(z)$.
For each $\omega\in \Omega'$, denote by $d(\omega)$ the order of $\omega$. Note that every $\omega\in\Omega'\setminus \Omega$ is a root of unity with order $d(\omega)\leq m$.

%Since $A$ is Laplacian, it contains $r^2$, and so there exists \todo{find reference. Follows from Noether Normalization?} a homogeneous system of parameters $f'_0,\ldots, f'_m$ with $f'_0=r^2$. Thus
%\[ (1+z) H(z)=  \frac{F(z)}{(1-z) (1-z^{\deg(f'_1)})\cdots (1-z^{\deg(f'_m)})}\] 
%where $F(z)$ is a polynomial with $F(1)\neq 0$. Thus $d(\omega)\leq m$ for every $\omega\in\Omega\setminus \{1\}$.

The partial fraction decomposition of $(1+z) H(z)$ is
\begin{equation}\label{E:Frdec}
(1+z) H(z)= P(z) + \sum_{\omega\in\Omega} \alpha_{\omega}(\omega-z)^{-m-1} + \sum_{\omega\in\Omega'}\sum_{j=1}^{d_\omega} \beta_{j,\omega}(\omega-z)^{-j}
\end{equation}
where $P(z)$ is the polynomial part of $(1+z)H(z)$ and $d_\omega=\min(d(\omega),m)$. Using the binomial series, we obtain
\begin{equation}\label{E:binseries}
(\omega-z)^{-j}= \omega^{-j} (1-(\omega^{-1} z))^{-j}= \omega^{-j}\sum_{k=0}^\infty \binom{k+j-1}{k} \omega^{-k} z^k
\end{equation}
\todo{added more detail expanding equations above}
Putting equations \eqref{E:Frdec} and \eqref{E:binseries} together, the formula for $(1+z) H(z)$ becomes
\begin{align*}
P(z) + \sum_k\left(\sum_{\omega\in\Omega} \alpha_{\omega}\omega^{-k-m-1}{k+m \choose k} + \sum_{\omega\in\Omega'}\sum_{j=1}^{d_\omega} \beta_{j,\omega}\omega^{-k-j}{k+j-1 \choose k}\right)z^k
\end{align*}

Noting that, for fixed $j$, the binomial $\binom{k+j-1}{k}$ is a polynomial in $k$ with leading term $k^{j-1}/(j-1)!$, we get
\begin{align*}
b_k\sim&\left(\sum_{\omega\in\Omega}{1\over m!} \alpha_{\omega}\omega^{-k-m-1}\right)k^{m} + \sum_{j=1}^{d_\omega}\left(\sum_{\omega\in\Omega'} {1\over (j-1)!}\beta_{j,\omega}\omega^{-k-j}\right)k^{j-1}\qquad \text{as }k\to \infty
\end{align*}
\todo{Added note here}
We note that the sums multiplying the powers of $k$ are periodic in $k$. If the first sum vanished identically, that is:
\[
\sum_{\omega\in\Omega}{1\over m!} \alpha_{\omega}\omega^{-k-m-1}=0\qquad \forall k
\]
then, averaging over all $k$ from $1$ to $d$, we would get:
\[
0={1\over d}\sum_{k=1}^{d}\sum_{\omega\in\Omega}{1\over m!} \alpha_{\omega}\omega^{-k-m-1}={1\over d}\sum_{\omega\in\Omega}{1\over m!} \alpha_{\omega}\omega^{-m-1}\left(\sum_{k=1}^{d}\omega^{-k}\right)={1\over m!}\alpha_1
\]
where the last equality follows from $\sum_{k=1}^{d}\omega^{-k} =0$ for $\omega\in\Omega\setminus \{1\}$. Since $\alpha_1\neq 0$, we thus know that the asymptotic behavior of $b_k$ as $k\to\infty$ is dominated by the term 
\[ \frac{1}{m!}\left(\sum_{\omega\in\Omega} \alpha_{\omega} \omega^{-m-1}\omega^{-k} \right)k^{m}\qquad \text{as } k\to\infty.\]

Our assumption that
\[ b_k \sim C k^{m'}\qquad \text{as } k\to \infty \]
implies that $m=m'$ (proving (a)), and that the periodic function
\[
\frac{1}{m!}\left(\sum_{\omega\in\Omega} \alpha_{\omega} \omega^{-m-1}\omega^{-k} \right)
\]
is constant (hence equal to its average) and equal to $C$:
\begin{equation} \label{E:periodic}
{1\over m!}\alpha_1={1\over m!}\sum_{\omega\in\Omega} \alpha_{\omega} \omega^{-m-1}\omega^{-k}  =C \qquad \forall k.
\end{equation}

Using the partial fraction decomposition \eqref{E:Frdec}, this implies (c). Similarly, fixing $\omega_0\in \Omega\setminus\{1\}$, multiplying \eqref{E:periodic} with $\omega_0^k$, and adding for $k=1, \ldots d$ yields $\alpha_{\omega_0}=0$, thus proving (b).

%\todo{Add citation for the hironaka decomposition}
Finally, (d) follows from (c) and the Hironaka decomposition \eqref{E:Hironaka} of $H(z)$.
\end{proof}

\begin{remark} The arguments in the proof above are standard in enumerative combinatorics, see for example \cite[Chapter 5]{Wilf} for a general reference, and \cite{MOanswer} for the specific inspiration.
\end{remark}

\begin{remark}
We point out that the conclusions of Lemma \ref{L:asymptotics} do not hold for general Cohen-Macaulay algebras, for example $\mathbb{R}[t^3]$. The authors do not know, however, whether every Cohen-Macaulay algebra containing $r^2$ satisfies the conclusions of the Lemma.
\end{remark}

\subsection{Proof of Theorem \ref{MT:spherical} and its representation-theoretic analogue}
In this section we prove the following, slightly more precise version of Theorem \ref{MT:spherical}:

\begin{theorem}
\label{T:spherical}
Let $(V,\F)$, $\dim V=n+1$, be an infinitesimal foliation with closed leaves, let $A=\bigoplus_i A_i\subset\R[V]$ be the associated Laplacian algebra of basic polynomials, and let $H(z)=\sum_i (\dim A_i) z^i$ denote its Hilbert series. Denote by $\sphere V$ the unit sphere in $V$, and by $m$ the dimension of the leaf space $X=\sphere V /\F$.

Then:
\begin{enumerate}[(a)]
\item The Laurent series of $H(z)$ at $z=1$ has the form
\[H(z)= \frac{\Vol(X)}{\Vol(\sphere^m)} (1-z)^{-m-1} + (\cdots) (1-z)^{-m} + \cdots\]
\item $A$ is a Cohen--Macaulay algebra with Krull dimension $m+1$, and, for every homogeneous system of parameters $f_0, \ldots f_m\in A$, 
%\todo{Simplify part B, use current version before the proof}
\[\frac{\Vol(X)}{\Vol(\sphere^m)}= \frac{\dim_{\R[f_0, \ldots, f_m]}A }{\deg f_0 \cdots \deg f_m}\]
where $\dim_{\R[f_0, \ldots, f_m]}A$ denotes the rank of $A$ as a finite free module over its subalgebra $\R[f_0, \ldots, f_m]$. 
\end{enumerate}
\end{theorem}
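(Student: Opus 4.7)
The plan is to assemble three ingredients already established earlier in the section: Proposition \ref{P:CM} (which gives the Cohen--Macaulay property together with graded finite generation), Lemma \ref{L:sphericalWeyl} (which computes the asymptotic growth of the coefficients of $(1+z)H(z)$ from Weyl's Law), and the purely algebraic Lemma \ref{L:asymptotics} (which extracts structural information about $H(z)$ from such growth).

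First I would note that by Proposition \ref{P:CM}, $A$ is a graded, finitely generated, Cohen--Macaulay subalgebra of $\R[V]$, and by Noether normalization it admits a homogeneous system of parameters $f_0, \ldots, f_{m'}$ where $m'+1$ is the Krull dimension of $A$, so that $A$ is a finite free $\R[f_0, \ldots, f_{m'}]$-module. Then Lemma \ref{L:sphericalWeyl} applies, giving that the coefficients $b_k$ of $(1+z)H(z)$ satisfy
\[
b_k \sim C k^{m} \qquad \text{as } k\to\infty, \qquad \text{where} \qquad C = \frac{\Vol(X)\omega_m}{(2\pi)^m}.
\]
Feeding this asymptotic into Lemma \ref{L:asymptotics}, part (a) of that lemma identifies the exponent $m'$ with $m$, so the Krull dimension of $A$ is $m+1$, establishing the first half of (b).

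Next I would read off part (a) of the theorem directly from Lemma \ref{L:asymptotics}(c): the Laurent expansion of $H(z)$ at $z=1$ has leading coefficient $Cm!/2$. To match this with the desired form of part (a), the key is the elementary identity
\[
\frac{m!\,\omega_m}{2(2\pi)^m} \;=\; \frac{1}{\Vol(\sphere^m)},
\]
which is a direct computation from $\omega_m = \pi^{m/2}/\Gamma(m/2+1)$ and $\Vol(\sphere^m) = 2\pi^{(m+1)/2}/\Gamma((m+1)/2)$ using the duplication formula for $\Gamma$. This gives the leading coefficient as $\Vol(X)/\Vol(\sphere^m)$ as desired.

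Finally, the volume formula in (b) is obtained by combining Lemma \ref{L:asymptotics}(d) with the same numerical identity: the lemma gives
\[
C = \frac{2\,\dim_{\R[f_0,\ldots,f_m]}A}{m!\,\deg f_0 \cdots \deg f_m},
\]
and substituting $C = \Vol(X)\omega_m/(2\pi)^m$ and applying the identity above yields precisely
\[
\frac{\Vol(X)}{\Vol(\sphere^m)} = \frac{\dim_{\R[f_0, \ldots, f_m]}A}{\deg f_0 \cdots \deg f_m}.
\]
I expect no genuine obstacle, since all hard work has been isolated in the preceding lemmas; the only point requiring care is the bookkeeping of the gamma-function identity connecting $\omega_m$ and $\Vol(\sphere^m)$, and confirming that any homogeneous system of parameters (not just one) yields the same ratio, which is automatic from Lemma \ref{L:asymptotics}(d).
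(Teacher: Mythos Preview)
Your proposal is correct and follows essentially the same route as the paper's own proof: invoke Proposition \ref{P:CM}, then Lemma \ref{L:sphericalWeyl}, then Lemma \ref{L:asymptotics}, and finish with the numerical identity $\vol(\sphere^m)\omega_m = 2(2\pi)^m/m!$. The only cosmetic difference is that the paper verifies this last identity by an induction using the recursions $\omega_{i+1}=\vol(\sphere^i)/(i+1)$ and $\vol(\sphere^{i+1})=2\pi\omega_i$, whereas you appeal to the closed-form Gamma expressions and the duplication formula; both are perfectly fine.
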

\begin{proof}

By Proposition \ref{P:CM}, the algebra $A$ is graded, finitely generated, and Cohen--Macaulay.

By Lemma \ref{L:sphericalWeyl}, the coefficients of the power series 
\[(1+z)H(z)=\sum_{k=0}^\infty b_k z^k\]
satisfy 
\[ b_k \sim C k^{m}\qquad \text{as } k\to \infty \]
where 
\[C={vol(X)\omega_{m}\over (2\pi)^{m}},\]
 $m=\dim X$ and $\omega_{m}=vol \DD^{m}$.

By Lemma \ref{L:asymptotics}, the Krull dimension of $A$ is $m+1$, the leading term in the Laurent series of $H(z)$ at $z=1$ is $\frac{Cm!}{2}(1-z)^{-m-1}$, and 
\[C=\frac{2\dim_{\R[f_0,\ldots, f_m]} A}  {m! \deg f_0 \cdots \deg f_m}.\]
Thus
\[\frac{\vol(X)}{\vol(\sphere^m)}  = \frac{\dim_{\R[f_0,\ldots, f_m]} A}  { \deg f_0 \cdots \deg f_m} \cdot 
\frac{2(2\pi)^{m} }{m! \omega_{m}\vol(\sphere^m) } \]

Finally, it remains to show that $\vol(\sphere^m)\omega_m= 2(2\pi)^m /m!$. This can be done with a straightforward induction argument using the well-known recursive formulas $\omega_{i+1}=\vol(\sphere^i)/(i+1)$ and $\vol(\sphere^{i+1})=2\pi \omega_i$.
\end{proof}

The analogous result to Theorem \ref{MT:spherical} for orthogonal representations is:
\begin{proposition}
\label{P:spherical}
Let $V$ be the Euclidean vector space of dimension $n+1$, and $G\to \O(V)$ an orthogonal representation of the compact group $G$. Let $A=\R[V]^G\subset\R[V]$ be the associated algebra of invariant polynomials, and let $H(z)$ denote its Hilbert series. Denote by $\sphere V$ the unit sphere in $V$, and by $m$ the dimension of the orbit space $X=\sphere V /G$.

Then:
\begin{enumerate}[(a)]
\item The Laurent series of $H(z)$ at $z=1$ has the form
\[H(z)= \frac{\Vol(X)}{\Vol(\sphere^m)} (1-z)^{-m-1} + (\cdots) (1-z)^{-m} + \cdots\]
\item $A$ is a Cohen--Macaulay algebra with Krull dimension $m+1$, and, for every homogeneous system of parameters $f_0, \ldots f_m\in A$, 
\[\frac{\Vol(X)}{\Vol(\sphere^m)}= \frac{\dim_{\R[f_0, \ldots, f_m]}A }{\deg f_0 \cdots \deg f_m}\]
where $\dim_{\R[f_0, \ldots, f_m]}A$ denotes the rank of $A$ as a finite free module over its subalgebra $\R[f_0, \ldots, f_m]$. 
\end{enumerate}
\end{proposition}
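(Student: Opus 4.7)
My plan is to mirror the proof of Theorem \ref{T:spherical} essentially verbatim, with the algebra of basic polynomials replaced by the algebra of invariants $A=\R[V]^G$. Three ingredients are required, and each has a classical analogue in the representation-theoretic setting.

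In place of Proposition \ref{P:CM}, I would use the classical results that $A=\R[V]^G$ is graded and finitely generated (Hilbert's finiteness theorem for compact group actions), and Cohen--Macaulay (the original Hochster--Roberts theorem, see \cite[Theorem 2.6.5]{DerksenKemper}). Moreover, $A$ is Laplacian, since both $r^2$ and $\Delta=\sum_i \partial^2/\partial x_i^2$ are $\O(V)$-invariant and hence preserve the subspace of $G$-invariant polynomials. In place of Theorem \ref{MT:Weyl}, which is invoked inside Lemma \ref{L:sphericalWeyl}, I would use the classical homogeneous Weyl's Law of Br\"uning--Heintze \cite[Corollary 3.5]{BH78} and Donnelly \cite{Donnelly78}, which applies to \emph{any} compact (possibly disconnected) Lie group acting by isometries on $\sphere V$, yielding
\[N(t)\sim\frac{\Vol(X)\omega_m}{(2\pi)^m}t^{m/2}\qquad\text{as }t\to\infty\]
for the counting function $N(t)$ of $G$-invariant eigenvalues of the Laplacian on $\sphere V$.

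With these two substitutions, the analogue of Lemma \ref{L:sphericalWeyl} goes through verbatim: the spherical harmonic decomposition $\R[V]_k=\mathcal H_k\oplus r^2\R[V]_{k-2}$ intersects $A$ to give $A_k=(A\cap\mathcal H_k)\oplus r^2 A_{k-2}$ (since $A$ is Laplacian), and the counting function asymptotic above translates into $b_k\sim Ck^m$ for the coefficients of $(1+z)H(z)$, with $C=\Vol(X)\omega_m/(2\pi)^m$. Finally, since Lemma \ref{L:asymptotics} is purely algebraic, it applies directly to $H(z)$ and yields both (a) and (b), exactly as in the proof of Theorem \ref{T:spherical}. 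I do not expect a substantial obstacle, because every input that relied on the deeper theory of singular Riemannian foliations in Theorem \ref{T:spherical} has a well-established classical substitute from invariant theory and spectral geometry.
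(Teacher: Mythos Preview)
Your proposal is correct and matches the paper's own proof essentially verbatim: the paper simply states that the argument is completely analogous to that of Theorem \ref{T:spherical}, with the sole change that the analogue of Lemma \ref{L:sphericalWeyl} is obtained from the homogeneous Weyl's Law of \cite{BH78, Donnelly78} in place of Theorem \ref{MT:Weyl}. Your observation that $A=\R[V]^G$ is Laplacian (so Proposition \ref{P:CM} applies directly, or alternatively one can invoke the classical Hochster--Roberts theorem) is exactly the right way to handle the Cohen--Macaulay input.
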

\begin{proof}
The proof is completely analogous to the proof of Theorem \ref{T:spherical}, except that the analogue of Lemma \ref{L:sphericalWeyl} for representations is proved using the main results of \cite{Donnelly78, BH78} instead of Theorem \ref{MT:Weyl}.
\end{proof}
Even though Proposition \ref{P:spherical} follows from \cite{Donnelly78, BH78} using classical results, to the best of the authors' knowledge this fact has not been noticed for general (infinite) groups $G$.

\subsection{Examples}
In this section we provide examples to illustrate Theorem \ref{MT:spherical} and its version for representations, Proposition \ref{P:spherical}.

\begin{example}\label{EX:finite}
Suppose $G$ is a \emph{finite} group, and $G\to\O(V)$ is injective (i.e., faithful). Then the orbit space $X=\sphere V/G$ is obtained from a fundamental domain of the action by boundary identifications. In particular $\dim X=\dim \sphere V$ and $\Vol(X)/\Vol(\sphere V)=|G|^{-1}$. Moreover, Proposition \ref{P:spherical}(a) follows immediately from Molien's formula, see  \cite[Proposition 3.1.4]{NeuselSmith}.
\end{example}

Specializing even more:
\begin{example}\label{EX:refl}
Suppose $G\subset \O(V)$ is a finite group generated by reflections. Then $A=\R[V]^G$ is free by the 
Chevalley--Shephard--Todd Theorem \cite[Theorem 7.1.4]{NeuselSmith}, so that Proposition \ref{P:spherical}(b) reduces to the well-known fact that 
\[|G|=\deg f_0 \cdots \deg f_m\] 
for an algebraically independent generating set $f_0, \ldots f_m$ of $\R[V]^G$, see \cite[Proposition 3.1.5]{NeuselSmith}
\end{example}

An example illustrating Theorem \ref{MT:spherical}:
\begin{example}[Clifford foliations]
The assertions and definitions below can be found in \cite{Radeschi14}.

Let $V=\R^{2l}$ and $C=(P_0, \ldots P_m)$ be a Clifford system on $V$, which implies $m\leq l$. Let $\psi\colon V\to \R^{m+2}$ be the polynomial map given by 
\[ \psi(x)=(\psi_1, \ldots, \psi_{m+2}) (x)= (\|x\|^2, \langle P_0 x, x\rangle, \ldots,  \langle P_{m} x, x\rangle)\]
If $m\neq l-1$, the fibers of $\psi$ form an infinitesimal singular Riemannian foliation $\F_C$ of $V$, called a ``Clifford foliation''. The leaf space $X=\sphere V/\F_C$ is isometric to a hemisphere in $\sphere^{m+1}(1/2)$ when $m\leq l-2$, and to $\sphere^m(1/2)$ when $m=l$. Thus the quotient of volumes $\vol X/\vol(\sphere^{\dim X})$ appearing in Theorem \ref{MT:spherical} is $2^{-m-2}$, respectively $2^{-m}$.

On the other hand,  $\psi_1, \ldots, \psi_{m+2}$ actually generate the algebra $A$ of basic polynomials of $\F_C$, see \cite[proof of Theorem C]{MR20q}. When $m\leq l-2$, they are algebraically independent because the image of $\psi$ has non-empty interior. Each $\psi_i$ has degree two, so this confirms the conclusion of Theorem \ref{MT:spherical}(b).

When $m=l$, there is one relation between the generators, namely $\psi_1^2= \psi_2^2 +\cdots +\psi_{m+2}^2$. The polynomials $\psi_1, \ldots, \psi_{m+1}$ form a homogeneous system of parameters, and $A$ is a free module of rank $2$ over $\R[\psi_1, \ldots, \psi_{m+1}]$. Thus the right-hand side of the equation in Theorem \ref{MT:spherical}(b) is $\frac{2}{2^{m+1}}=2^{-m}$, as expected.
\end{example} 

\begin{example}[Hopf fibrations]
Special cases of the previous example (all with m=l) are the infinitesimal singular Riemannian foliations of $\R^4=\C^2$, $\R^8=\Hr^2$, and $\R^{16}=\Ca^2$ whose restrictions to the unit sphere form the fibers of one of the Hopf fibrations $\sphere^3\to \sphere^2(1/2)$, $\sphere^7\to \sphere^4(1/2)$, and $\sphere^{15}\to \sphere^8(1/2)$.

Concretely in the complex case, the infinitesimal singular Riemannian foliation of $\R^4=\C^2$ is given by the orbits under the action of the unit complex numbers $a\in \sphere^1$ by $a(z,w)=(az,aw)$. The algebra $A$ of invariants is generated by the degree $2$ real polynomials $\psi_1, \ldots, \psi_4$ given by $|z|^2+|w|^2, |z|^2-|w|^2, 2\mathrm{Re}(z\bar{w}), 2\mathrm{Im}(z\bar{w})$, respectively. The only relation is $\psi_1^2= \psi_2^2 +\psi_3^2+ \psi_4^2$. A possible choice of system of parameters is $\psi_1, \psi_2, \psi_3$, for which  $\dim_{\R[\psi_1,\psi_2, \psi_3 ]}A =2$, thus illustrating part (b) of either Theorem \ref{MT:spherical} or Proposition \ref{P:spherical}, because $\frac{2}{2^3}=\vol \sphere^2(1/2) / \vol \sphere^2(1)$.\end{example}

\section{A note on manifold submetries}
\todo{Added a final blurb on Alexander's stuff. feel free to add/change/remove as you see fit}
An even more general concept than closed singular Riemannian foliations, is given by manifold submetries, which are maps $\sigma:M\to X$, from a smooth Riemannian manifold to a metric space, whose fibers are smooth and mutually equidistant. This generalizes closed singular Riemannian foliations in essentially two ways, first by not requiring the presence of smooth vector fields generating the tangent spaces of the fibers (although it is conjectured that such vector fields would exist anyway) and by allowing the fibers to be disconnected. The structure of submetries from Riemannian manifolds (and, in particular, manifold submetries) has been studied in \cite{KL20}. In the particular case where $M$ is the unit sphere in $\R^n$,  it was proved in \cite{MR20, MR23} that there is a 1-1 correspondence between manifold submetries from $M$ and Laplacian algebras contained in $\R[x_1, \ldots, x_n]$.

The statement of Theorem \ref{MT:Weyl} still makes sense for manifold submetries, and the authors believe it still holds. However, the main obstacle to generalizing our proof to the manifold submetry case is the lack of a Slice Theorem (cf. Section \ref{SS:slice}). More specifically, given $L=\sigma^{-1}(x)$, the differential $d_p\sigma: \sphere(\nu_p L)\to \Sigma_xX$ (where $\Sigma_xX$ is the space of directions of $X$ at $x$) might, in principle, depend on the choice of $p\in L$.

The Slice Theorem was used in a fundamental way in the inductive step to prove the geometric estimate \eqref{E:induction} in the proof of Theorem \ref{MT:geometricestimate}. In view of the discussion above, proving that the estimate \eqref{E:induction} still holds essentially requires a stronger version of Theorem \ref{MT:geometricestimate}, applied to a manifold submetry $\sigma:M\to X$, where the constant $C$ only depends on $X$, rather than the specific $\sigma$.

After this paper was finished, we were informed by A. Lytchak of his new manuscript \cite{Lytchak23}, in which he obtains a number of results bounding the geometry of leaves in a manifold submetry $\sigma:M\to X$, which only depend on the geometry of $M$ and $X$ rather than on $\sigma$ itself. In particular, he obtains a more general and accurate version of Lemma \ref{L:Jacobian} (cf. Proposition 1.3 and Corollary 7.4 of \cite{Lytchak23}).
The authors believe that the results in \cite{Lytchak23}, together with previous results on the structure of transnormal submetries in \cite{KL20}, could constitute a important ingredients to generalizing Theorem \ref{MT:geometricestimate} in the way mentioned above, and hence Theorem \ref{MT:Weyl}, to the manifold submetry case.

%More obstacles are one the way, however. Most notably the absence of a Slice Theorem for manifold submetries, which was used in a fundamental way in the inductive step to prove the geometric estimate \eqref{E:induction} in the proof of Theorem \ref{MT:geometricestimate}. Proving that equation \eqref{E:induction} still holds, would require more uniform control on the geometry of leaves of manifold submetries, that would only depend on the geometry of the base space. This will be the topic of a forthcoming paper.
%\subsection{Diameter and the first basic eigenvalue}
%\todo{remove this subsection?}
%\cite{Cheng75}
%\cite{LY80}
%\cite{LR98}

%\cite{GLLM19}

\bibliography{ref}
\bibliographystyle{alpha}

\end{document}